\newtheorem{conjecture}[equation]{Conjecture}
\theoremstyle{definition}
\newtheorem{example}[equation]{Example}
\newtheorem{question}[equation]{Question}
\newtheorem{fact}[equation]{Fact}
\newtheorem{situation}[equation]{}
\newtheorem{remark}[equation]{Remark}
\newtheorem{definition}[equation]{Definition}
\theoremstyle{plain}
\newtheorem{theorem}[equation]{Theorem}
\newtheorem*{theorem*}{Theorem}
\newtheorem{proposition}[equation]{Proposition}
\newtheorem{lemma}[equation]{Lemma}
\newcommand{\defeq}{:=}
\numberwithin{equation}{section}
\setlist[enumerate]{labelindent=\parindent,leftmargin=*,topsep=0.4ex,itemsep=0.1ex}
\setlist[itemize]{labelindent=\parindent,leftmargin=*,topsep=0.4ex,itemsep=-1ex,partopsep=1ex,parsep=1ex}
\setlist[enumerate,1]{labelindent=\parindent, leftmargin=*,label=\textup{(\arabic*)},ref=\textup{\arabic*}}
\renewcommand{\setminus}{\mathbin{\rule[0.2em]{0.67em}{0.12em}}}%
\renewcommand{\mathbb}{\mathbf}
\providecommand{\keywords}[1]{\textbf{{Keywords---}} #1}
\title{Faithful tropicalization and Skeleton of  $\overline{\textsf{M}}_{0,\MakeLowercase{n}}$}
\begin{document}

\author{$\text{JIACHANG 
 XU}^{\dagger}$}\thanks{$\dagger$: Corresponding author}
\address{Institute of Mathematics and Informatics, Bulgarian Academy of Sciences, Bulgaria, Sofia 1113, Acad. G. Bonchev Str., Bl. 8}
\email{jiachangxu823@gmail.com}

\author{Muyuan Zhang}
\address{Westlake Institute for Advanced Study; Institute for Theoretical Sciences, Westlake University, Hangzhou 310030, China}
\email{mzhang73@outlook.com}
\subjclass[2000]{14T20, 14N10, 14G22}
\keywords{Essential skeleton, Berkovich space, Valuation, Log regular scheme, Tropicalization, Moduli space}
\begin{abstract}
  We propose a new method to compare between the Essential skeleton of Berkovich analytification of $(\overline{\textsf{M}}_{0,n},{\overline{\textsf{M}}_{0,n} \setminus \textsf{M}_{0,n}})$ and faithful tropicalization of $\textsf{M}_{0,n}$ over a complete discrete valued field. In particular, we proved the two combinatorial structures are the same in terms of valuation in $\overline{\textsf{M}}^{\textsf{an}}_{0,n}$. 
\end{abstract}

\maketitle

\section*{Introduction}
Let $K$ be a completely discrete valued field, equipped with the trivial logarithmic structure $0 \to K$. Let $(\overline{\textsf{M}}_{0,n}, \mathscr{M}_{\overline{\textsf{M}}_{0,n} \setminus \textsf{M}_{0,n}})$ be the moduli space of stable genus zero curves with $n$-marked points, which is a log smooth log scheme over the log point $(\textsf{Spec}\,K, 0)$. The aim of this work is to study the topology of the Berkovich analytification $\overline{\textsf{M}}^{\textsf{an}}_{0,n}$ of $\overline{\textsf{M}}_{0,n}$ by investigating the combinatorial structures associated to the log regular log scheme $(\overline{\textsf{M}}_{0,n}, \mathscr{M}_{\overline{\textsf{M}}_{0,n} \setminus \textsf{M}_{0,n}})$ and tropicalization of $\overline{\textsf{M}}_{0,n}$ in terms of valuation. 

\begin{situation}\label{sit:intro}%
  \textbf{Berkovich skeleton for a log regular log scheme.}
In  \cite{Berkovich_1990}, V.Berkovich develops a non-archimedean analytic geometry over $K$. For any algebraic variety $X$ over $K$, he associates a $K$-analytic space $X^{\textsf{an}}$ to $X$ such that its consists of pairs $(x,|-|_{x})$ where $x$ belongs to $X$ and $|-|_{x}$ is a real valuation on the residue field of $x$ extending the discrete valuation on $K$. For any snc model $\mathcal{X}$ of $X$, one can construct a subspace $\textsf{Sk}(\mathcal{X})$ of $X^{\textsf{an}}$ which is homemorphic to dual intersection complex of special fiber of $\mathcal{X}_{s}$ and it has been proven that $\textsf{Sk}(\mathcal{X})$ is a strong deformation retract of $X^{\textsf{an}}$ when $X$ is proper\cite{MR3370127} and its complement also has been studied in \cite{brown2024structurecomplementskeleton}.
  For a non-proper variety $X$, there is no homotopy equivalent between $\textsf{Sk}(\mathcal{X})$ and $X^{\textsf{an}}$ since we cannot construct the retraction map. In \cite{MR3455421} M.Baker, S. Payne and J. Rabinoff considered a generalized skeleton for non-proper curves. Let $\overline{X}$ be the smooth compactification of a smooth integral curve $X$ and let $D=\overline{X} \smallsetminus X=\{x_{1},\dots,x_{m}\}$. Choose a semistable model  $\mathcal{X}$ of $\overline{X}$ such that $\overline{\{x_{i}\}} \cap \mathcal{X}_{s}$ are different smooth points of $\mathcal{X}_{s}$. Then there exists an unique minimal closed connected subset $\textsf{Sk}(\mathcal{X}, \sum_{i}\overline{\{x_{i}\}}+(\mathcal{X}_{s})_{\textsf{red}})$ of $X^{\textsf{an}}$ containing $\textsf{Sk}(\mathcal{X})$ and whose closure in $\overline{X}^{\textsf{an}}$ contains $D$. In this case, we have a deformation retraction from $X^{\textsf{an}}$ to $\textsf{Sk}(\mathcal{X},\sum_{i}\overline{\{x_{i}\}}+(\mathcal{X}_{s})_{\textsf{red}})$. The generalization of the construction above for higher dimensional varieties can be found in \cite{brown_mazzon_2019}, M.V.Brown and E.Mazzon construct a logarithmic version of the Berkovich skeleton  $\textsf{Sk}(\mathcal{X}^{+})$ for a log regular model $\mathcal{X}^{+}$ of a log smooth proper variety over a discrete-valued ring with divisorial log structure $\big(\textsf{Spec}\,K^{\circ}, (\varpi)\big)$, which could be used to study the homotopy type of analytification of toroidal compactification, which is a deformation retract of $(X\setminus D_{\mathcal{X}, \textsf{hor}})^{\textsf{an}}$, where $D_{\mathcal{X}, \textsf{hor}}$ is the horizontal components of boundary divisor. Naturally, Take $\mathcal{X}_{0,n}^{+} := \overline{\mathcal{M}}_{0,n} \otimes_{\mathbb{Z}}K^{\circ}$ as a log regular model of $\overline{\textsf{M}}_{0,n}$, then we have an associated skeleton $\textsf{Sk}(\mathcal{X}_{0,n}^{+})$. To study $\textsf{Sk}(\mathcal{X}_{0,n}^{+})$, we firstly note the following results:
  \begin{fact}
  The skeleton $\textsf{Sk}(\mathcal{X}_{0,n}^{+})$ coincides with the essential skeleton $\textsf{Sk}^{\textsf{ess}}(\overline{\textsf{M}}_{0,n},\partial \overline{\textsf{M}}_{0,n})$ in this case, but in general is not true.
  \end{fact}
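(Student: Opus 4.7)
The containment $\textsf{Sk}^{\textsf{ess}}(\overline{\textsf{M}}_{0,n},\partial\overline{\textsf{M}}_{0,n}) \subseteq \textsf{Sk}(\mathcal{X}^+_{0,n})$ is automatic from the general Brown--Mazzon framework: for every nonzero pluri-log-canonical form $\omega \in H^0\bigl(\overline{\textsf{M}}_{0,n},\, m(K+\partial)\bigr)$, the Kontsevich--Soibelman skeleton $\textsf{Sk}(\omega)$ is a union of closed faces of the monoidal-complex decomposition of $\textsf{Sk}(\mathcal{X}^+_{0,n})$. The content of the fact is therefore the reverse inclusion $\textsf{Sk}(\mathcal{X}^+_{0,n}) \subseteq \textsf{Sk}^{\textsf{ess}}$.

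For this reverse inclusion I plan to argue cone by cone. Write $\textsf{Sk}(\mathcal{X}^+_{0,n}) = \bigcup_T \sigma_T$, where the cones $\sigma_T$ are indexed by stable $n$-marked trees $T$ and $\dim\sigma_T$ equals the number of internal edges of $T$. Since $\textsf{Sk}^{\textsf{ess}}$ is closed in $\overline{\textsf{M}}^{\textsf{an}}_{0,n}$, it is enough to show that each relative interior $\sigma_T^{\circ}$ lies in the Kontsevich--Soibelman skeleton of \emph{some} log pluricanonical form. I would build such forms from the forgetful morphisms $\pi_S : \overline{\textsf{M}}_{0,n} \to \overline{\textsf{M}}_{0,4}$ attached to $4$-element subsets $S\subseteq\{1,\dots,n\}$: on $\overline{\textsf{M}}_{0,4} \cong \mathbb{P}^1$ with cross-ratio $\lambda$, the form $d\lambda/\bigl(\lambda(\lambda-1)\bigr)$ trivialises $\Omega^{1}\bigl(\log\{0,1,\infty\}\bigr)$, so each pullback $\pi_S^{*}\bigl(d\lambda/\lambda(\lambda-1)\bigr)$ is a global section of $\Omega^{1}_{\overline{\textsf{M}}_{0,n}}(\log\partial)$. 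Wedge products of $n-3$ such pullbacks over well-chosen collections of $4$-subsets yield top-degree log forms, and the Kapranov/Keel presentation of $\overline{\textsf{M}}_{0,n}$ guarantees that such products span a sufficiently large subspace of $H^{0}(K+\partial)$ to reach every boundary stratum.

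The combinatorial heart of the proof is then, for each stable tree $T$, to exhibit an explicit combination of the forms above whose weight function on $\textsf{Sk}(\mathcal{X}^+_{0,n})$ attains its minimum exactly along the open cone $\sigma_T^{\circ}$. I expect this bookkeeping, uniform in $T$, to be the main obstacle. A conceptually cleaner route is to translate the problem into tropical language: the functions $v \mapsto \log|\pi_S^{*}\lambda|_v$ are precisely the cross-ratio coordinates cutting out the tropical moduli space $\textsf{M}^{\textsf{trop}}_{0,n}$ of Abramovich--Caporaso--Payne inside $\mathbb{R}^{\binom{n}{4}}$, and that tropical moduli space is canonically homeomorphic to $\textsf{Sk}(\mathcal{X}^+_{0,n})$ in our trivially log-structured base case through the identification of $\sigma_T$ with the cone of metric realisations of $T$. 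Under this dictionary, the required weight-minimum statement becomes a standard fact about piecewise linear generators of $\textsf{M}^{\textsf{trop}}_{0,n}$ by edge lengths, which finishes the proof and, at the same time, provides the bridge to the faithful tropicalization comparison promised in the abstract.
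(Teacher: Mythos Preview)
Your approach is correct in outline but takes a substantially longer route than the paper, and you are overcomplicating the reverse inclusion. The paper dispatches the fact in one line by quoting Brown--Mazzon \cite[Proposition~5.1.7]{brown_mazzon_2019}: once you observe that $(\overline{\textsf{M}}_{0,n},\partial\overline{\textsf{M}}_{0,n})$ is a dlt pair with $K_{\overline{\textsf{M}}_{0,n}}+\partial\overline{\textsf{M}}_{0,n}$ semiample and that $\mathcal{X}_{0,n}^{+}$ is a good dlt minimal model over $K^{\circ}$, their result gives $\textsf{Sk}^{\textsf{ess}}(\overline{\textsf{M}}_{0,n},\partial\overline{\textsf{M}}_{0,n})=\mathcal{D}_{0}^{=1}(\mathcal{X}_{0,n}^{+})=\textsf{Sk}(\mathcal{X}_{0,n}^{+})$ directly, with no cone-by-cone analysis needed.

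The specific point where your plan goes astray is the expectation that you must produce, for each tree $T$, a separate log pluricanonical form whose weight function is minimised exactly on $\sigma_{T}^{\circ}$, and that wedge products of cross-ratio pullbacks ``span a sufficiently large subspace''. In fact $K_{\overline{\textsf{M}}_{0,n}}+\partial\overline{\textsf{M}}_{0,n}\sim 0$, so $H^{0}\bigl(m(K+\partial)\bigr)$ is one-dimensional for every $m\geqslant 0$: all your wedge products are scalar multiples of a single nowhere-vanishing trivialising section $\omega_{0}$. Since $\mathcal{X}_{0,n}^{+}$ is log smooth with reduced boundary, every divisorial valuation appearing in the skeleton has log discrepancy zero, hence the weight function of $\omega_{0}$ is identically zero on $\textsf{Sk}(\mathcal{X}_{0,n}^{+})$ and $\textsf{Sk}(\omega_{0})$ already equals the whole skeleton. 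So the ``combinatorial heart'' you anticipate is empty, and the tropical dictionary you propose, while correct, is unnecessary for this particular fact (it is of course exactly what the rest of the paper develops for the comparison theorem).
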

  To see this, note that every simple normal crossing pair is also dlt and for general dlt pairs, one can define the dual complex by simply ignoring the singularity. let's denote the dual complex for the coefficient 1 part of a dlt pair by $\mathcal{D}^{=1}$, the open subset of  $\mathcal{D}^{=1}$ corresponding to the strata supported on the special fiber is denoted by $\mathcal{D}_{0}^{=1}$. By the following proposition, we can get the fact above:
  
  \begin{proposition}\label{biress}\cite[Proposition 5.1.7]{brown_mazzon_2019}
Let $(X,\Delta_{X})$ be a dlt pair with $K_{X}+\Delta_{X}$ semiample and $(\mathcal{X},\Delta_{\mathcal{X}})$ is a good dlt minimal model of $(X,\Delta_{X})$ over $K^{\circ}$, then $\mathcal{D}_{0}^{=1}(\mathcal{X},\Delta_{\mathcal{X}})=\textsf{Sk}^{\textsf{ess}}(X,\Delta_{X})=\textsf{Sk}(\mathcal{X}_{0,n}^{+})$.
\end{proposition}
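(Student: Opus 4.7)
The plan is to prove the two asserted equalities independently. For the first, $\mathcal{D}_{0}^{=1}(\mathcal{X},\Delta_{\mathcal{X}})=\textsf{Sk}^{\textsf{ess}}(X,\Delta_{X})$, I would follow the strategy going back to Nicaise--Xu for degenerations of Calabi--Yau varieties. Each stratum $Z$ of the coefficient-$1$ part $\Delta_{\mathcal{X}}^{=1}$ defines a monomial divisorial valuation $v_{Z}$ on $X$ via the toroidal structure at the generic point of $Z$, giving a continuous embedding of the geometric realization of $\mathcal{D}_{0}^{=1}$ into $X^{\textsf{an}}$. The key calculation is that each $v_{Z}$ is a minimizer of the weight function $\textsf{wt}_{\omega}$ associated to a nonzero pluri-log-canonical section $\omega\in H^{0}\bigl(X,m(K_{X}+\Delta_{X})\bigr)$, since the log discrepancy of $Z$ with respect to $(\mathcal{X},\Delta_{\mathcal{X}})$ vanishes; the semiampleness of $K_{X}+\Delta_{X}$ ensures the existence of such an $\omega$ and, crucially, that the minimum locus of $\textsf{wt}_{\omega}$ is independent of the section chosen, so it is intrinsic to the pair.

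For the reverse inclusion, I would invoke the characterization of $\textsf{Sk}^{\textsf{ess}}$ as (the closure of) the set of divisorial points of minimum weight. Any such minimizer is a log canonical place of $(X,\Delta_{X})$; using the existence of good dlt minimal models in this birational class together with MMP-with-scaling techniques, one realizes the corresponding valuation as coming from a stratum of $\Delta_{\mathcal{X}}^{=1}$, possibly after passing to an auxiliary dlt modification and invoking the birational invariance of $\mathcal{D}_{0}^{=1}$ within the equivalence class of good minimal models. The second equality $\textsf{Sk}^{\textsf{ess}}(X,\Delta_{X})=\textsf{Sk}(\mathcal{X}^{+})$, for the chosen log regular model (in the application, $\mathcal{X}_{0,n}^{+}$), is then more formal: by the construction of Brown--Mazzon in~\cite{brown_mazzon_2019}, $\textsf{Sk}(\mathcal{X}^{+})$ parametrizes precisely the monomial divisorial valuations attached to the strata of the log stratification of the special fiber, and these strata coincide with those appearing in $\mathcal{D}_{0}^{=1}$ whenever $(\mathcal{X},\Delta_{\mathcal{X}})$ is a good dlt minimal model whose log structure is divisorial along $\Delta_{\mathcal{X}}^{=1}$; matching the PL structures is then a direct comparison of the two monomial parametrizations.

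I expect the principal obstacle to be the birational invariance step embedded in the first equality: controlling $\mathcal{D}_{0}^{=1}$ through the MMP operations connecting two good dlt minimal models requires either a special-termination argument or a direct geometric check that the flips and divisorial contractions occurring in the $(K_{X}+\Delta_{X})$-MMP do not alter the PL type of the dual complex of the coefficient-$1$ part. The semiampleness hypothesis is what makes this tractable, since it localizes the whole problem to the $(K_{X}+\Delta_{X})$-trivial locus, where the combinatorial data lives; for the moduli application it then suffices to verify that $\mathcal{X}_{0,n}^{+}=\overline{\mathcal{M}}_{0,n}\otimes_{\mathbb{Z}}K^{\circ}$ is a good dlt minimal model of $(\overline{\textsf{M}}_{0,n},\partial\overline{\textsf{M}}_{0,n})$, which reduces to the classical snc structure of the boundary and the known semiampleness of $K+\partial$ on $\overline{\textsf{M}}_{0,n}$.
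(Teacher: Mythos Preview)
The paper does not give its own proof of this proposition: it is quoted verbatim from \cite[Proposition~5.1.7]{brown_mazzon_2019} and used as a black box. There is therefore nothing in the present paper to compare your sketch against. What you have outlined is, in broad strokes, the Nicaise--Xu/Brown--Mazzon argument that lies behind the cited result, and it is a reasonable summary of how that proof goes; but for the purposes of this paper no proof is expected of you here---a citation suffices.

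One remark on the statement itself: as printed, the final term $\textsf{Sk}(\mathcal{X}_{0,n}^{+})$ is specific to the moduli application, while the hypotheses are phrased for a general dlt pair $(X,\Delta_X)$ with model $(\mathcal{X},\Delta_{\mathcal{X}})$. This is a minor slip in the paper; the general form of the cited proposition has $\textsf{Sk}(\mathcal{X}^{+})$ for the log-regular model in question, and the paper then specializes to $\mathcal{X}_{0,n}^{+}$ in the surrounding discussion. Your proposal already treats it that way, which is correct.
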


  \begin{lemma}
Let $X^{+}$ be a log regular scheme over log trait $S^{+}$, then $$\textsf{Sk}(X^{+}) \cong \Delta_{F(X^{+})}^{1}$$ as compact conical polyhedral complexes, where $\Delta_{F(X^{+})}^{1}$ is an conical polyhedral complex with an integral structure associated to a toroidal embedding without self-intersection\cite{MR0335518}. 
\end{lemma}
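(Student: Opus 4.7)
The plan is to realize both sides of the claimed isomorphism as colimits of the same diagram of rational polyhedral cones indexed by the Kato fan $F(X^{+})$. Since $X^{+}$ is log regular over the log trait $S^{+}$, Kato's structure theorem gives it a canonical toroidal structure: étale locally it is modeled on a toric variety, and its log stratification coincides with the toroidal stratification of $X \supset U = X \setminus |\partial X|$ without self-intersection. In particular, points of $F(X^{+})$ correspond bijectively to log strata $Y$ of $X^{+}$, and at each such stratum one has a sharp characteristic monoid $\overline{\mathscr{M}}_{X^{+}, Y}$ together with specialization maps.

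On the toroidal side, I would spell out the KKMS construction from \cite{MR0335518}: to each stratum $Y$ one associates the rational polyhedral cone $\sigma_{Y} = \textup{Hom}(\overline{\mathscr{M}}_{X^{+}, Y}, \mathbb{R}_{\geq 0})$, equipped with the integral structure coming from the lattice $\overline{\mathscr{M}}^{\textup{gp}}_{X^{+}, Y}$ of monomial Cartier divisors supported on $\partial X$ near $Y$. These cones glue along the face inclusions determined by specialization, yielding the conical polyhedral complex $\Delta_{F(X^{+})}$; the compact complex $\Delta_{F(X^{+})}^{1}$ is the slice at $v(\varpi) = 1$, where $\varpi$ is the uniformizer of $K^{\circ}$.

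On the analytic side, the Brown--Mazzon skeleton $\textsf{Sk}(X^{+})$ is built from monomial valuations adapted to the log structure. I would show that at each log stratum $Y$ such valuations form a cone canonically identified with $\textup{Hom}(\overline{\mathscr{M}}_{X^{+}, Y}, \mathbb{R}_{\geq 0})$: given an étale chart realizing $X^{+}$ as a toric model near $Y$, a monomial valuation is determined by its values on the characters of $\overline{\mathscr{M}}_{X^{+}, Y}$, and log regularity ensures that this data is independent of the chart and glues across strata. Specializations between strata correspond to face inclusions of the associated cones, so $\textsf{Sk}(X^{+})$ is realized as the same colimit over $F(X^{+})$. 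Matching the two constructions cone-by-cone via the tautological identification and checking compatibility with face maps gives the desired homeomorphism $\Delta_{F(X^{+})}^{1} \cong \textsf{Sk}(X^{+})$.

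The main obstacle I anticipate is the bookkeeping around the uniformizer and the integral structure. Because $X^{+}$ lives over the log trait, the characteristic monoid at each stratum contains a distinguished element corresponding to $\varpi$, and the $\Delta^{1}$-slice on the toroidal side corresponds exactly to the normalization condition $v(\varpi) = 1$ on the analytic side. Verifying this compatibility cleanly -- together with the agreement of the integral affine structures from $\overline{\mathscr{M}}^{\textup{gp}}_{X^{+}, Y}$ on both sides -- is where care is required; everything else is a formal colimit argument. Compactness of the resulting complex then follows from the finiteness of the stratification of $X^{+}$ together with the height-one slicing.
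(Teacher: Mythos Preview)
Your approach is correct and essentially the same as the paper's: both sides are assembled from the cones $\textup{Hom}(\overline{\mathscr{M}}_{X^{+},x},\mathbb{R}_{\geq 0})$ sliced at $\varpi=1$, indexed by points of the Kato fan and glued along specialization/face maps. The paper's proof is literally the single sentence ``This result is direct from the definitions above,'' because in the immediately preceding paragraph it has already written out $\Delta_{F(X^{+})}^{1}$ in exactly these terms, so your proposal is a careful unpacking of what that sentence means (with one cosmetic mismatch: you speak of \'etale charts, whereas the paper works throughout with Zariski fs log schemes).
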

\begin{remark}
  By \cite{QU11}, the tropicalization $\mathscr{T}X$ of the interior $X$ of $X^{+}$ coincides with $\Delta_{F(X^{+})}^{1}$.
\end{remark}
\begin{situation}\label{tropical relation}%
  \textbf{Relation to the faithful tropicalization.}
  For the ground field $K$ is trivial valued, M. Ulirsch proved the faithful tropicalization for Sch\"one subvariety of the toric variety defined by tropicalization. For the non-trivially algebraic closed ground field $K$, M. Ulirsch \cite{ulirsch2020nonarchimedean} proved the faithful tropicalization for $\overline{\mathcal{M}}_{g}$ and its tropicalization is isomorphic to a skeleton. A.Cueto, M.Habich and A.Werner \cite{MR3263167} proved that the tropical Grassmannian $\mathscr{T}\textsf{Gr}(2,n)$ with respect to Pl\"ucker embedding is homeomorphic to a closed subset of $\textsf{Gr}(2,n)^{\textsf{an}}$, we can easily generalize this result to the tropicalization of ${\textsf{M}}_{0,n}$ by the Gelfand-MacPherson correspondence \cite{MR1237834}. In \cite{DB04}, Speyer and Sturmfels show that $\mathscr{T}{\textsf{M}_{0,n}}$ coincide with the moduli space of $n$-marked stable tropical curves $\textsf{M}^{\textsf{trop}}_{0,n}$, thus we have a faithful tropicalization map $\textsf{trop}: \textsf{M}^{\textsf{an}}_{0,n} \to \textsf{M}^{\textsf{trop}}_{0,n}$, in other words, $\textsf{M}^{\textsf{trop}}_{0,n}$ is homeomorphic to a closed subset of $\textsf{M}^{\textsf{an}}_{0,n}$. Furthermore, by using the interpretation of $\overline{\textsf{M}}_{0,n}$ as Chow quotients of Grassmannian, we show that the tropicalization of $\textsf{Gr}(2,n)$ is compatible with Chow quotient, the tropical $\mathscr{F}(\overline{\textsf{M}}_{0,n})$ coincides with the generalized cone complex of $\mathscr{F}\textsf{M}_{0,n}$ and we can extend the section map from $\mathscr{F}\textsf{M}_{0,n}$ to $\mathscr{F}\overline{\textsf{M}}_{0,n}$:
  \begin{proposition}\ref{section of chow}
      The faithful tropicalization of $\textsf{Gr}(2,n)$ is compatible with Chow quotient $\textsf{Gr}(2,n)\sslash ^{\textsf{ch}}\mathbb{G}^{\textsf{}}_{m,K}$. 
  \end{proposition}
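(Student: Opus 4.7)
The plan is to combine Kapranov's Chow-quotient description $\overline{\textsf{M}}_{0,n}=\textsf{Gr}(2,n)\sslash^{\textsf{ch}}\mathbb{G}_{m,K}^{n}$ with the Speyer-Sturmfels identification $\mathscr{T}\textsf{Gr}(2,n)/\mathbb{R}^{n}\cong \textsf{M}^{\textsf{trop}}_{0,n}$, and to verify that forming the quotient commutes with faithful tropicalization both on the open stratum and across the boundary.

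I would first handle the generic locus. On $\textsf{Gr}^{\circ}(2,n)$ the Chow quotient coincides with the ordinary geometric quotient by the torus, whose image is $\textsf{M}_{0,n}$. Applying the faithful tropicalization of $\textsf{Gr}(2,n)$ from \cite{MR3263167} together with the Gelfand-MacPherson correspondence \cite{MR1237834}, one obtains a commutative square of continuous maps between $\textsf{Gr}^{\circ}(2,n)^{\textsf{an}}$, $\mathscr{T}\textsf{Gr}(2,n)$, $\textsf{M}_{0,n}^{\textsf{an}}$ and $\textsf{M}^{\textsf{trop}}_{0,n}$, in which the right-hand vertical map is the linear quotient of $\mathscr{T}\textsf{Gr}(2,n)$ by the lineality space $\mathbb{R}^{n}$ induced by the torus action. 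Because both horizontal maps admit continuous sections, the Chow quotient is compatible with faithful tropicalization on the interior, and to finish it remains to extend the sections over the boundary.

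I would then extend the square to the compactification. Kapranov's construction stratifies $\overline{\textsf{M}}_{0,n}$ by combinatorial types of stable rational $n$-marked curves, and the cones of $\mathscr{F}\overline{\textsf{M}}^{\textsf{trop}}_{0,n}$ are indexed by the same combinatorics; working stratum by stratum, I would show that each Chow-limit boundary stratum tropicalizes exactly to the corresponding cone, using the toroidal local structure of $\overline{\textsf{M}}_{0,n}$ and the description of $\mathscr{T}\textsf{Gr}(2,n)$ as the space of phylogenetic trees on $n$ leaves. The main obstacle is this last step: for a one-parameter family of Plücker vectors, one must verify that the Chow limit in $\overline{\textsf{M}}_{0,n}$ lies in the stratum dual to the tropical tree recorded by the valuations of the Plücker coordinates. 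I expect this to reduce, via Tevelev's valuative criterion for toroidal embeddings, to a cone-by-cone matching of combinatorial types in the Bergman fan of the graphic matroid of $K_{n}$, at which point the compatibility statement follows formally by passing to the analytification.
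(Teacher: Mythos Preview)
Your treatment of the open locus matches the paper's: both use the Gelfand--MacPherson correspondence together with the Cueto--Habich--Werner section to obtain the commutative square over $\textsf{Gr}_{0}(2,n)$ and $\textsf{M}_{0,n}$. The divergence is at the boundary. The paper does not argue stratum by stratum. Instead it observes that $\overline{\textsf{M}}_{0,n}$ sits inside the projective toric variety $\mathbb{P}^{{n\choose 2}-1}\sslash^{\textsf{ch}}\mathbb{G}^{n}_{m,K}\cong X_{\textsf{M}_{0,n}}$, invokes Gibney--Maclagan \cite[Theorem~3.1 and Remark~3.11]{MR3507917} to identify $\mathscr{T}\overline{\textsf{M}}_{0,n}$ with the extended cone complex of $\mathscr{T}\textsf{M}_{0,n}$, and then simply extends each local section $\sigma'|_{\mathscr{C}'_{T}}$ by continuity to the limit points of its cone. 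Your proposed route---matching Chow limits to tropical strata via Tevelev's valuative criterion and Bergman-fan combinatorics---amounts to re-deriving the Gibney--Maclagan input by hand; it is not incorrect, but it is considerably more work, and the ``main obstacle'' you flag is exactly what their theorem already packages. The payoff of the paper's approach is brevity; the payoff of yours would be self-containment, at the cost of redoing a known toric-tropical computation. (A small caution: the fan you want is that of the ambient toric Chow quotient, which is the space of phylogenetic trees; invoking the Bergman fan of the graphic matroid of $K_{n}$ is morally right but be careful with the indexing when you actually carry it out.)
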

  Since we have two cone complexes $\textsf{Sk}(\mathcal{X}_{0,n}^{+})$ and $\mathscr{T}\textsf{M}_{0,n}$ in $\overline{\textsf{M}}_{0,n}^{\textsf{an}}$, we ask the following question.
  \begin{question}
  What’s the comparison between the skeleton $\textsf{Sk}(\mathcal{X}_{0,n}^{+})$ and $\mathscr{T}\textsf{M}_{0,n}$ in terms of valuation?
  \end{question}
  This question can be treated in \cite{MR3661346} when the ground field is a trivial valued field, the idea is by showing $\overline{\textsf{M}}_{0,n}$ is a sch\"one subvariety of the toric variety defined by the tropicalization, then by \cite[Corollary 1.3.] {MR3661346} which uses the same extended cones from each side. For a discrete-valued case, our approach in this paper to this question is to give the explicit descriptions for each side by considering the analytic structure of the skeleton, more precisely by using Kapranov's description of $\overline{\textsf{M}}_{0,n}$ and Gelfand-MacPherson correspondence, we first give the complete explicit description of $\textsf{Sk}(\mathcal{X}_{0,n}^{+})$ for $n \leqslant 5$ and proved $\textsf{Sk}(\mathcal{X}_{0,n}^{+})=\sigma(\mathscr{T}\textsf{M}_{0,n})$ for $n\leqslant 5$, where $\sigma(\mathscr{T}\textsf{M}_{0,n})$ is the image of section map of the tropicalization map. Furthermore, for the case $n \geqslant 6$, the complete explicit description for $\textsf{Sk}(\mathcal{X}_{0,n}^{+})$ becomes quite complicated due to the combinatorial complexity, note that the forgetful map $\pi_{n+1}: \overline{\textsf{M}}_{0,n+1} \to \overline{\textsf{M}}_{0,n}$ gives the relations of boundary divisors on each side and the fiber of the forgetful map is isomorphic to the curve itself, this turns out that our question could be analyzed for the fiber of the forgetful map. We first study the properties of the skeleton and faithful tropicalization on the forgetful map, more precisely, we have the following theorem:
  \begin{theorem}\label{Main1}
The universal curve diagram is commutative:
\begin{equation}
    \begin{tikzcd}
     \textsf{Sk}(\mathcal{X}^{+}_{0,n+1}) \arrow[r,hookrightarrow] \arrow[d, twoheadrightarrow, ""] \arrow[dr,phantom, ""]
& \overline{\textsf{M}}^{\textsf{an}}_{0,n+1}  \arrow[r, rightarrow, "\textsf{trop}"] \arrow[d, twoheadrightarrow, "\pi_{n+1}^{\textsf{an}}"] &  \mathscr{T}\overline{\textsf{M}}_{0,n+1}  \arrow[d, twoheadrightarrow, "\pi_{n+1}^{\textsf{trop}}"] \arrow[r, hookleftarrow] & \mathscr{T}\textsf{M}_{0,n+1}  \arrow[d, twoheadrightarrow] \arrow[ll, bend right=40, "\sigma"]\\
\textsf{Sk}(\mathcal{X}^{+}_{0,n}) \arrow[r,hookrightarrow]
& \overline{\textsf{M}}^{\textsf{an}}_{0,n} \arrow[r, rightarrow, "\textsf{trop}"] & \mathscr{T}\overline{\textsf{M}}_{0,n}  \arrow[r, hookleftarrow]& \mathscr{T}\textsf{M}_{0,n}\arrow[ll, bend left=40, "\sigma"]
\end{tikzcd}
\end{equation}
\end{theorem}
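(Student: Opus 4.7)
The plan is to verify commutativity of the four squares in the diagram separately, reducing each to a functoriality statement about the forgetful morphism $\pi_{n+1}\colon \overline{\mathcal{M}}_{0,n+1} \to \overline{\mathcal{M}}_{0,n}$, which is log smooth with respect to the boundary log structures. After base change to $K^{\circ}$, this extends to a log morphism $\mathcal{X}^{+}_{0,n+1} \to \mathcal{X}^{+}_{0,n}$ of log regular models.

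For the leftmost square, I would use functoriality of the Kato fan construction to produce a morphism of cone complexes $F(\mathcal{X}^{+}_{0,n+1}) \to F(\mathcal{X}^{+}_{0,n})$ and hence a map $\Delta^{1}_{F(\mathcal{X}^{+}_{0,n+1})} \to \Delta^{1}_{F(\mathcal{X}^{+}_{0,n})}$. Using the identification $\textsf{Sk}(X^{+}) \cong \Delta^{1}_{F(X^{+})}$ from the lemma above, it remains to check that this combinatorial map is exactly the restriction of $\pi_{n+1}^{\textsf{an}}$. Concretely, I would take a monomial valuation $v \in \textsf{Sk}(\mathcal{X}^{+}_{0,n+1})$, observe that $\pi_{n+1}^{\textsf{an}}(v) = v \circ \pi_{n+1}^{*}$ remains monomial because $\pi_{n+1}^{*}$ preserves the effective-monomial monoids of the divisorial log structures, and conclude that $\pi_{n+1}^{\textsf{an}}$ sends the skeleton into the skeleton and agrees there with the induced cone-complex morphism.

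The middle square is the functoriality of tropicalization: for a morphism of toroidal varieties the analytic tropicalization intertwines analytification with the induced combinatorial map, and $\pi_{n+1}^{\textsf{trop}}$ is defined by exactly this property. The right-middle square records that $\pi_{n+1}^{\textsf{trop}}$ preserves the locus $\mathscr{T}\textsf{M}_{0,\bullet}\subset \mathscr{T}\overline{\textsf{M}}_{0,\bullet}$ of tropical curves with all marked points on finite vertices; this follows from the combinatorial description of the tropical forgetful map (forget the $(n{+}1)$-st leg, then stabilize), since stabilization only contracts length-zero edges and does not introduce infinite legs. For the section map square, recall that $\sigma$ sends a tropical curve $\Gamma \in \mathscr{T}\textsf{M}_{0,n+1}$ to the monomial valuation on $\overline{\mathcal{M}}_{0,n+1}$ determined by the cone of $\Gamma$'s dual graph; commutativity with $\pi_{n+1}^{\textsf{an}}$ then reduces to the identity that this monomial valuation, pushed forward by $\pi_{n+1}$, matches the monomial valuation attached to $\pi_{n+1}^{\textsf{trop}}(\Gamma)$, which is a cone-by-cone combinatorial check carried out via Kapranov's presentation of $\overline{\textsf{M}}_{0,n}$ and the Gelfand--MacPherson correspondence already invoked in Proposition~\ref{section of chow}.

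The main obstacle lies in the leftmost square. The forgetful map $\pi_{n+1}$ is not strict as a log morphism, since the stabilization of the universal curve reshuffles boundary strata, and consequently the induced map on Kato fans is not merely the pullback of divisorial components. The careful step is to show that monomial valuations are preserved under pushforward along this non-strict log morphism, and that this agrees with the purely combinatorial cone-complex map induced by $F(\mathcal{X}^{+}_{0,n+1}) \to F(\mathcal{X}^{+}_{0,n})$. Once this log-functoriality of the skeleton is established, the remaining squares follow from standard functoriality of tropicalization and a direct combinatorial inspection of the section map.
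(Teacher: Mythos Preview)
Your treatment of the leftmost square is essentially what the paper does: it uses the log morphism $\pi_{n+1}$ to obtain a map of characteristic monoids $\overline{\mathscr{M}}_{\mathcal{X}^{+}_{0,n},\pi_{n+1}(\eta)} \to \overline{\mathscr{M}}_{\mathcal{X}^{+}_{0,n+1},\eta}$ (concretely $\mathbb{N}^{n-2}\to\mathbb{N}^{n-1}$) and checks that the pushed-forward valuation $\textsf{val}_{C}\circ\pi_{n+1}^{\sharp}$ is again of the form $v_{\alpha'}$ for $\alpha'=\alpha\circ\theta$. Your concern about non-strictness is not the obstacle.

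The real gap is in the section square. You write that $\sigma$ ``sends a tropical curve $\Gamma$ to the monomial valuation on $\overline{\mathcal{M}}_{0,n+1}$ determined by the cone of $\Gamma$'s dual graph''. But that is the description of the \emph{skeleton} embedding, not of $\sigma$. In this paper $\sigma$ is the Cueto--H\"abich--Werner section, constructed in Pl\"ucker coordinates from the faithful tropicalization of $\textsf{Gr}(2,n)$ via Gelfand--MacPherson (Proposition~\ref{localsect}); the assertion that its image lands in $\textsf{Sk}(\mathcal{X}^{+}_{0,n+1})$ is exactly Theorem~\ref{comparison thm}, which is proved \emph{after} and \emph{using} the present theorem. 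If you take $\sigma(\Gamma)$ to be the skeleton point from the outset, the section square collapses into the leftmost square and the main comparison theorem becomes a tautology.

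What the paper actually does for this square is an explicit computation in Pl\"ucker coordinates. Given $x\in\mathscr{C}'_{T}$ with endpoint leaves $i,j$ and $n+1\notin\{i,j\}$, one builds a cherry order $\leqslant_{0}$ on $T_{0}$ from $\leqslant$ on $T$ and a compatible index set $I=I_{0}\cup\{i(n{+}1),t(n{+}1)\}$ so that the CHW local sections for levels $n$ and $n+1$ are given by the same $\delta_{I}\circ\pi_{I}$ formula; then one verifies $\pi_{n+1}^{\textsf{an}}\circ\sigma(x)=\sigma(\pi_{n+1}^{\textsf{trop}}(x))$ by evaluating both valuations on each cross-ratio $u_{il}u_{jk}/(u_{ik}u_{jl})$ and comparing the tree distances $d_{**}$ on $T$ with $d'_{**}$ on the stabilized tree $T_{0}$. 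This needs a short case analysis (S1, S2, S3 in the paper) according to how the $(n{+}1)$-st leaf is attached. Your proposal bypasses this computation by mis-identifying $\sigma$, and that is the step that would fail.
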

\begin{remark}
  Part of the commutativity of this diagram has been proved in \cite{MR3377065} for the base field $K$ is a trivial valued field. The proof we present below relies on the combinatorial description of tropical Grassmannian in \cite{MR3263167} and \cite{DB04} and verifies them on the level of analytic structure of Berkovich skeleton. 
\end{remark}
 
We firstly showed that the Berkovich skeleton of $\mathcal{X}_{0,n+1}^{+}$ restricted on the fiber of the analytic forgetful map is equal to the faithful tropicalization of the fiber restricted on the $\mathscr{T}\textsf{M}_{0,n}$. Then by using the theorem \ref{Main1} and induction we can recover the two cone complexes associated to $\overline{\textsf{M}}_{0,n+1}$ coincide in terms of valuation not only isomorphic and get the main theorem:
\begin{theorem}\label{MT1}
Let $\sigma(\mathscr{T}{\textsf{M}_{0,n}})$ be the image of $\mathscr{T}{\textsf{M}_{0,n}}$ under the section map of tropicalization, then we have $\textsf{Sk}{(\mathcal{X}_{0.n}^{+})}=\sigma(\mathscr{T}{\textsf{M}_{0,n}})$.
\end{theorem}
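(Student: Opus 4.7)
The plan is to prove Theorem \ref{MT1} by induction on $n$, using the forgetful map $\pi_{n+1}: \overline{\textsf{M}}_{0,n+1} \to \overline{\textsf{M}}_{0,n}$ as the main combinatorial-geometric engine, together with the commutative diagram of Theorem \ref{Main1}. Both sides of the claimed equality live inside $\overline{\textsf{M}}_{0,n}^{\textsf{an}}$ as subsets of valuations, so the goal is not merely to produce an isomorphism of cone complexes but to check that each cell of $\textsf{Sk}(\mathcal{X}_{0,n}^{+})$ is pointwise realised by the corresponding monomial valuation coming from $\sigma$.

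For the base of the induction I would take $n \le 5$, where the statement $\textsf{Sk}(\mathcal{X}_{0,n}^{+}) = \sigma(\mathscr{T}\textsf{M}_{0,n})$ is verified directly from the explicit description of $\textsf{Sk}(\mathcal{X}_{0,n}^{+})$ alluded to in the introduction, using Kapranov's presentation of $\overline{\textsf{M}}_{0,n}$ as a blow-up of $\mathbb{P}^{n-3}$ and the Gelfand--MacPherson correspondence to match boundary strata with cones of $\mathscr{T}\textsf{M}_{0,n}$. Concretely, for each maximal boundary stratum one writes down the monomial valuation attached to it by the log-regular model, then checks it agrees with the image under $\sigma$ of the corresponding vertex of the tropical moduli space.

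For the inductive step, assume the equality for $n$. The strategy is to analyse everything fibre-by-fibre over the analytic forgetful map $\pi_{n+1}^{\textsf{an}}$, exploiting the fact that a fibre is essentially an $(n+1)$-st marked curve together with the forgotten point. Theorem \ref{Main1} tells us that the left square is commutative with surjective verticals, and that $\sigma$ is compatible with the two forgetful maps. So the first sub-step is to prove the fibre-wise statement: for every point $x \in \textsf{Sk}(\mathcal{X}_{0,n}^{+})$, the intersection
\begin{equation*}
\textsf{Sk}(\mathcal{X}_{0,n+1}^{+}) \cap (\pi_{n+1}^{\textsf{an}})^{-1}(x) \;=\; \sigma(\mathscr{T}\textsf{M}_{0,n+1}) \cap (\pi_{n+1}^{\textsf{an}})^{-1}(x)
\end{equation*}
holds inside the fibre, which by Baker--Payne--Rabinoff identifies both with the generalised skeleton of the corresponding stable tropical curve with $(n+1)$-st leg. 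This uses that the horizontal boundary in $\mathcal{X}_{0,n+1}^{+}$ over a boundary stratum of $\mathcal{X}_{0,n}^{+}$ recovers exactly the marked points of that stable curve, and that the log structure restricted to the fibre is the divisorial one from the marked points plus the special fibre. Having that fibre-wise equality, one then glues: a point in $\textsf{Sk}(\mathcal{X}_{0,n+1}^{+})$ projects by the inductive hypothesis into $\sigma(\mathscr{T}\textsf{M}_{0,n})$, and the fibre-wise identification upgrades it to a point of $\sigma(\mathscr{T}\textsf{M}_{0,n+1})$; the reverse inclusion is symmetric.

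The main obstacle I anticipate is the fibre-wise comparison, and in particular checking that the logarithmic skeleton of $\mathcal{X}_{0,n+1}^{+}$ really does restrict to the Baker--Payne--Rabinoff extended skeleton on each geometric fibre of $\pi_{n+1}$, and not merely to the unextended $\textsf{Sk}$. This requires a careful analysis of how the horizontal boundary components $D_{\mathcal{X},\textsf{hor}}$ in the universal curve interact with the fan $F(\mathcal{X}_{0,n+1}^{+})$ and with the monomial-valuation description $\textsf{Sk} \cong \Delta_{F}^{1}$; once that is in place, the remark after the preceding lemma (identifying $\Delta^{1}_{F}$ with tropicalization via Qu) converts the skeleton side directly into the image of $\sigma$, and induction closes the argument.
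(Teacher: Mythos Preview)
Your overall architecture matches the paper's: induction on $n$ with base cases $n\le 5$ handled explicitly, and the inductive step carried out fibre-by-fibre over the analytic forgetful map using the commutativity of the diagram in Theorem~\ref{Main1}. The paper likewise reduces to showing, for each $[C']\in\textsf{Sk}(\mathcal{X}^{+}_{0,n})=\sigma(\mathscr{T}\textsf{M}_{0,n})$, that
\[
\sigma\bigl(((\textsf{M}_{0,n+1})_{\xi_n}\otimes\mathscr{H}(\xi_n))^{\textsf{trop}}\bigr)=\textsf{Sk}(\mathcal{X}^{+}_{0,n+1})\cap(\overline{\textsf{M}}^{\textsf{an}}_{0,n+1})_{[C']},
\]
and then glues.

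Where your proposal diverges, and where the gap lies, is in the justification of this fibre-wise equality. You plan to invoke Baker--Payne--Rabinoff to identify both sides with the extended skeleton of the fibre curve, and then appeal to the identification $\mathscr{T}X\cong\Delta^1_{F(X^+)}$ from \cite{QU11} to ``convert the skeleton side directly into the image of $\sigma$''. But that identification is an isomorphism of abstract cone complexes, not an equality of subsets of valuations inside the analytification. Knowing that the log skeleton of the fibre and the section $\sigma$ restricted to the fibre both abstractly realise the same extended tree does not tell you they are the \emph{same} points of $(\overline{\textsf{M}}^{\textsf{an}}_{0,n+1})_{[C']}$; a priori they could be two different embeddings of that tree. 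This is precisely the content of the theorem, so appealing to the cone-complex isomorphism here is circular.

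The paper closes this gap by an explicit valuation computation rather than by abstract skeleton theory. Via Lemma~\ref{kpblow} it shows that the Pl\"ucker cross-ratios $\frac{u_{il}u_{jk}}{u_{ik}u_{jl}}$ serve as local generators at the relevant boundary strata of $\overline{\textsf{M}}_{0,n+1}$. Then for a point $x$ in the tropical fibre it writes the ``new'' coordinate $u=\frac{u_{i(n+1)}u_{jk}}{u_{ik}u_{j(n+1)}}$ as a product of one factor governed by the $(n+1)$-st leaf and one factor $\frac{u_{il}u_{jk}}{u_{ik}u_{jl}}$ living in $K(\overline{\textsf{M}}_{0,n})$; the first factor is matched directly, and the second is matched by the inductive hypothesis. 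This yields $\sigma(x)=v_x$ as valuations, which is what is actually needed. Your proposal would become complete if you replaced the appeal to Qu/BPR in the last step by this kind of coordinate-level verification.
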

Note that $(\overline{\textsf{M}}_{0,n},\partial \overline{\textsf{M}}_{0,n})$ is a log canonical pair, the Theorem \ref{MT1} is a special case for the following conjecture which reveals the possible relation between birational geometry and tropicalization:
\begin{conjecture}
    Let $(X,\Delta)$ be a log canonical pair such that $U \defeq X \setminus \Delta$ is a subvariety of a torus, then for the tropicalization $\textsf{trop}: U \longrightarrow \mathscr{T}U$, the restriction map on the essential skeleton $\textsf{Sk}^{\textsf{ess}}(X,\Delta)$: $\textsf{trop}|_{\textsf{Sk}^{\textsf{ess}}(X,\Delta)}, \textsf{Sk}^{\textsf{ess}}(X,\Delta) \longrightarrow \mathscr{T}U $ is surjective with finite fiber.
\end{conjecture}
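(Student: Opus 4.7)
The plan is to mimic the strategy of Theorem \ref{MT1} with $\overline{\textsf{M}}_{0,n}$ replaced by a general log canonical pair $(X,\Delta)$: first reduce to a combinatorially tractable model on which $\textsf{Sk}^{\textsf{ess}}$ becomes an explicit dual complex, then identify this dual complex with a subcomplex of the Berkovich skeleton of a log regular model that admits a piecewise linear comparison to $\mathscr{T}U$ via the section map. Concretely, I would first pass to a dlt modification $f \colon (\tilde{X}, \tilde{\Delta}) \to (X,\Delta)$, which exists for any log canonical pair, and invoke birational invariance of the essential skeleton to replace $\textsf{Sk}^{\textsf{ess}}(X,\Delta)$ with $\textsf{Sk}^{\textsf{ess}}(\tilde{X},\tilde{\Delta})$ inside $X^{\textsf{an}}$. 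Proposition \ref{biress} then identifies this skeleton with the subcomplex $\mathcal{D}_0^{=1}(\tilde{X},\tilde{\Delta})$ of the dual complex, turning the question into a combinatorial comparison of cone complexes inside $X^{\textsf{an}}$.

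Next, using Tevelev's theory of tropical compactifications, I would further refine $\tilde{X}$ so that it becomes a tropical compactification of $U$ inside a toric variety whose fan is supported on $\mathscr{T}U$. After this refinement, the lemma recalled in Situation \ref{sit:intro} together with the subsequent remark identifies the full Berkovich skeleton of the log regular model with $\mathscr{T}U$, and the section map of $\textsf{trop}$ becomes the inclusion of $\mathscr{T}U$ as this skeleton. The essential skeleton $\textsf{Sk}^{\textsf{ess}}(X,\Delta)$ now sits as an explicit subcomplex of this ambient skeleton, and $\textsf{trop}|_{\textsf{Sk}^{\textsf{ess}}}$ is simply the induced piecewise linear map on cone complexes.

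In this setting, surjectivity reduces to showing that every monomial valuation in $\mathscr{T}U$ has log discrepancy zero with respect to $(X,\Delta)$; this should follow from log canonicity of $(X,\Delta)$ together with the fact that tropical monomial valuations see only the torus-boundary divisor. Finiteness of fibers should then follow from dimension counting: both $\textsf{Sk}^{\textsf{ess}}(X,\Delta)$ and $\mathscr{T}U$ are conical polyhedral complexes, the map $\textsf{trop}$ is integral affine on each closed cone, and properness plus equidimensionality on each cone forces finite fibers.

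The hard part will be establishing that every point of $\mathscr{T}U$ has at least one preimage in $\textsf{Sk}^{\textsf{ess}}(X,\Delta)$. For $\overline{\textsf{M}}_{0,n}$ this works because of the explicit Kapranov description and the Gelfand-MacPherson correspondence used in the proof of Theorem \ref{MT1}, but in general there may be rays of $\mathscr{T}U$ corresponding to divisorial directions not visible to any log canonical place of $(X,\Delta)$. I would attempt to control this by induction on $\dim X$ via a Mori-style fibration, reducing to curves where the skeleton of \cite{MR3455421} is known to coincide with the tropicalization, thereby mirroring the inductive strategy via the forgetful map $\pi_{n+1}$ that underlies the proof of Theorem \ref{MT1}.
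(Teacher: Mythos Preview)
The statement you are trying to prove is labeled and treated in the paper as a \emph{conjecture}, not a theorem. The paper does not offer a proof of it; it only observes that Theorem~\ref{MT1} is the special case $(X,\Delta)=(\overline{\textsf{M}}_{0,n},\partial\overline{\textsf{M}}_{0,n})$ and then states the general assertion as an open problem. So there is no ``paper's own proof'' to compare your proposal against, and what you have written is a strategy sketch for an open question rather than a proof that can be checked.

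Beyond that, your outline has genuine gaps that would need to be addressed before it could become a proof. First, you invoke Proposition~\ref{biress} to identify $\textsf{Sk}^{\textsf{ess}}(X,\Delta)$ with $\mathcal{D}_0^{=1}$ of a dlt model, but that proposition requires $K_X+\Delta$ to be semiample and the existence of a good dlt minimal model over $K^{\circ}$; neither hypothesis is part of the conjecture, and the general log canonical case is not covered by that statement. Second, your surjectivity argument asserts that ``every monomial valuation in $\mathscr{T}U$ has log discrepancy zero with respect to $(X,\Delta)$'' as a consequence of log canonicity. Log canonicity only gives that log discrepancies are $\geqslant 0$; it does not force equality, and the essential skeleton picks out precisely the valuations where the relevant weight function is minimized. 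There is no mechanism in your sketch explaining why every tropical monomial valuation must be a log canonical place of $(X,\Delta)$, and indeed this is the heart of the conjecture. Third, the finiteness-of-fibers step (``properness plus equidimensionality on each cone forces finite fibers'') presupposes that $\textsf{trop}$ restricted to the essential skeleton is a map of cone complexes of the same dimension, which you have not established and which does not follow formally from the setup. Finally, the proposed induction via a Mori fibration is only an analogy with the forgetful map $\pi_{n+1}$; in general there is no reason to expect a fibration whose fibers and base both inherit the same log-canonical-pair-inside-a-torus structure needed to run the induction.
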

  
  \end{situation}

\end{situation}

\begin{situation}%
  \textbf{Plan}
 
  In Section~\ref{sec:prelim} we recall the necessary notations and results of log geometry and tropical geometry. In Section~\ref{sec:faithfultrop} we explain how to use
 faithful tropicalization of Grassmannian developed in \cite{MR3263167} to explicitly construct the faithful tropicalization of $\textsf{M}_{0,n}$ and give the explicit description of $\mathscr{T}\textsf{M}_{0,n}$ for $n=4,5$. In section~\ref{sec:skeleton} give an explicit description of $\textsf{Sk}{(\mathcal{X}_{0.n}^{+})}$ for the cases $n=4,5$.
 In section~\ref{sec:comparison}, we discuss the relations among forgetful maps, skeletons and sections of tropicalization (see theorem  \ref{unidia}). Finally, we prove the comparison theorem \ref{comparison thm}.
\end{situation}

\noindent%
\textbf{Acknowledgment.} The first author is grateful to Morgan Brown for introducing him to the work of \cite{brown_mazzon_2019} and \cite{MR3731999}, suggesting the comparison question, discussion, and comments on the earlier draft of this work \cite{alma991031696915802976,xu2024faithfultropicalizationskeletonoverlinem0n}. Most parts of this work was done by the first author during his stay at University of Miami. The second author works on reviewing and editing for this work during his collaboration with first author on \cite{brown2024structurecomplementskeleton}. The authors also would like to thank the comments and interests of Phillip Griffiths, Martin Ulirsch, and Ludmil Katzarkov to this project. The author thanks for the invitation to introduce part of this work in Sunny Beach on August 2023
during the conference “Generalized and Symplectic Geometry”.
The first author is supported by the National Science Fund of Bulgaria, National Scientific Program "VIHREN",  Project no. KP-06-DV-7.
\section{Preliminaries}\label{sec:prelim}
In this section, we review the notations and results of non-archimedean analytification in the sense of Berkovich, log regular log scheme, and moduli spaces of tropical curves that will be used later. 
\begin{situation}
\textbf{Notation}
\begin{itemize}
    \item Let $K$ be a complete discreted valued field with the normalized valuation $v_{K}$, $K^{\circ}$ and $K^{\circ\circ}$ are corresponding valuation ring and maximal ideal. We define by $|-|_{K} \defeq \textsf{exp}(-v_{K})$ the absolute value on $K$ corresponding to $v_{K}$.
    \item All monoids are assumed to be commutative with units and maps of monoids to carry the unit to the unit. The group $P^{\textsf{gp}}$ is generated by $P$, that is, the image of $P$ under the
left adjoint of the inclusion functor from Abelian groups to
monoids.. A monoid $P$ is called \textit{integral} if the canonical map $P \to P^{\textsf{gp}}$ is injective, and \textit{saturated} if it is integral and for any $a \in P^{\textsf{gp}}$, there exists $n \geqslant 1$ such that $a^{n} \in P$.
    \item We denote $(X,\mathscr{M}_{X})$ for a log scheme. All log schemes in this paper are Zariski fs log scheme, for the details we refer to \cite{ogus_2018}.
    
\end{itemize}

\end{situation}
\begin{situation}\textbf{Topological description of Berkovich analytification}
In \cite{Berkovich_1990}, Berkovich constructs a non-archimedan analytification functor from the category of $K$-variety to $K$-analytic space which has similar properties with classic complex \textsf{GAGA} functor. In the paper, we will only use its topological description for a given $K$-variety $X$ as follows:
\begin{definition}
\begin{equation*}
    X^{\textsf{an}}=\bigg\{(x,\left| {-} \right|_{x})\,|\,\text{$x \in X$,$\left| {-} \right|_{x}$ valuation on $\kappa(x)$ extend valuation $v_K$ on $K$ }\bigg\}.
\end{equation*}
where $\kappa(x)$ is the residue field of $x$.
\end{definition}
\end{situation}
\begin{situation}
\textbf{Log regular log scheme}
\par
 Log regularity is introduced by K.Kato in \cite{KT93} and the Zariski log regular fs log scheme corresponds to the toroidal embedding without self-intersection. See \ref{Kato.fans-mumford} for the details.
\begin{definition}
Let $(X,\mathscr{M}_{X})$ be an fs log scheme, $(X,\mathscr{M}_{X}) $ is called \textit{log regular} at $x \in X$ if:
\begin{enumerate}
    \item $\mathscr{O}_{X,x}/\mathscr{I}_{x,x}$ is a regular local ring.
    \item $\textsf{dim}(\mathscr{O}_{X,x})=\textsf{dim}(\mathscr{O}_{X,x}/\mathscr{I}_{x,x})+\textsf{rank}_{\mathbb{Z}}(\overline{\mathscr{M}}^{\textsf{gp}}_{X,x})$.
\end{enumerate}
where $\mathscr{I}_{X,x}$ is the ideal generated by the image of $\mathscr{M}_{x,x}-\mathscr{O}^{*}_{X,x}$ in $\mathscr{O}_{X,x}$. $X$ is \textit{log regular} if $X$ is log regular at every point $x\in X$.
\end{definition}
\begin{definition}
Let $K$ be a complete discrete valued field, $X$ be a scheme locally finitely presented over $K^{\circ}$ and $U$ be an open subscheme of $X$. A pair $(X,U)$ is called strictly \textit{semi-stable} over $K^{\circ}$ of relative dimension $n$ if there exists an Zariski covering $\mathscr{V}=\{V\}$ such that for each $V$:
\begin{enumerate}
    \item A diagram of \'etale morphisms: 
     \begin{equation}
\begin{tikzcd}[column sep=3em]
& V \arrow[dl,hookrightarrow,"f"] \arrow[dr,"g"] & \\
X  &&  M = \textsf{Spec}\,K^{\circ}{[T_{0},\dots,T_{n}]}/(T_{0} \cdots T_{r}-\varpi)
\end{tikzcd}
\end{equation}
 \item There exists an open subscheme  \[N = \textsf{Spec}\,K^{\circ}[T_{0},\dots,T_{n},T^{-1}_{0},\dots,T^{-1}_{m}]/(T_{0}\cdots T_{r}-\varpi)\] of $M$, such that $g^{-1}(N)=f^{-1}(U)$.
\end{enumerate}
where $0 \leqslant r \leqslant m \leqslant n$ and $\varpi$ is a fixed uniformizer of the complete discrete valuation ring $K^{\circ}$.
\end{definition}

\begin{remark}
For the base field $K$, if the residue field $\kappa$ is perfect, a scheme of locally of finite presentation over $K^{\circ}$ is semi-stable if and only if the following conditions are satisfied:
\begin{enumerate}
    \item X is regular and flat over $K^{\circ}$, and $U$ is the complement of a divisor $D$ with normal crossing.
    \item The generic fiber $X_{\eta}$ is smooth over $K$, and $D_{K}$ is a divisor of $X_{K}$ with normal crossing relative to $K$.
    \item The speical fiber $X_{s}$ is reduced.
\end{enumerate}
\end{remark}
\begin{lemma}\cite{MR2047700}\label{semistable smooth}
If $(X,U)$ is strictly semi-stable over $K^{\circ}$, it is log smooth over $K^{\circ}$.
\end{lemma}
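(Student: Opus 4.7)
\medskip

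The plan is to apply Kato's local criterion for log smoothness to the étale-local model supplied by the definition of strict semi-stability. First I would endow $X$ with the divisorial log structure $\mathscr{M}_X$ associated to the complement $D \defeq X \setminus U$, and endow $S = \textsf{Spec}\,K^{\circ}$ with the standard log structure $\mathbb{N} \to K^{\circ}$ sending $1 \mapsto \varpi$; the lemma then becomes the assertion that the morphism $(X,\mathscr{M}_X) \to (S,\mathbb{N})$ of fs log schemes is log smooth. Since both log structures are fs and log smoothness is étale local on the source, I can replace $X$ by a member $V$ of the covering $\mathscr{V}$ and work with the étale morphism $g\colon V \to M = \textsf{Spec}\,K^{\circ}[T_0,\dots,T_n]/(T_0\cdots T_r - \varpi)$.

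Next I would write down explicit charts on $M$. Let $Q \defeq \mathbb{N}^{m+1}$ with basis $e_0,\dots,e_m$, and define $Q \to K^{\circ}[T_0,\dots,T_n]/(T_0\cdots T_r - \varpi)$ by $e_i \mapsto T_i$. Since the divisor $\{T_0\cdots T_m = 0\}$ is exactly the pullback of $D$ along $g$ by condition (2) in the definition, $Q$ is a chart for the pullback log structure on $M$, and hence, by compatibility with étale maps, also a chart for $\mathscr{M}_X$ on $V$. Likewise $P \defeq \mathbb{N}$ with $1 \mapsto \varpi$ is the standard chart on $S$. The structure map is then encoded by the monoid morphism
\begin{equation*}
\varphi\colon P = \mathbb{N} \longrightarrow Q = \mathbb{N}^{m+1}, \qquad 1 \longmapsto e_0 + e_1 + \cdots + e_r.
\end{equation*}

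Now I would verify Kato's criterion (see \cite[Thm.\ IV.3.1.8]{ogus_2018}): the induced map $\varphi^{\textsf{gp}}\colon \mathbb{Z} \to \mathbb{Z}^{m+1}$ is injective because $e_0+\cdots+e_r$ is nonzero, and the cokernel $\mathbb{Z}^{m+1}/\mathbb{Z}\cdot(1,\dots,1,0,\dots,0)$ is torsion-free because the coordinate vector $(1,\dots,1,0,\dots,0)$ is primitive. Hence the orders of $\ker\varphi^{\textsf{gp}}$ and of the torsion of $\textsf{coker}\,\varphi^{\textsf{gp}}$ are both $1$, trivially invertible on $X$. Finally, it remains to verify that the induced morphism of schemes
\begin{equation*}
V \longrightarrow S \times_{\textsf{Spec}\,\mathbb{Z}[P]} \textsf{Spec}\,\mathbb{Z}[Q] \;=\; \textsf{Spec}\,K^{\circ}[T_0,\dots,T_m]/(T_0\cdots T_r - \varpi)
\end{equation*}
is smooth; but this is immediate because it is the composition of the étale map $g$ with the smooth projection $M \to \textsf{Spec}\,K^{\circ}[T_0,\dots,T_m]/(T_0\cdots T_r-\varpi)$ that forgets the free variables $T_{m+1},\dots,T_n$.

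The only genuinely delicate step will be the identification of charts: one must check that the divisorial log structure on $V$ induced by $D|_V$ agrees with the pullback along $g$ of the toric log structure on $M$ coming from $Q$. This follows from condition (2) in the definition, which says $g^{-1}(N) = f^{-1}(U)$, so the supports match, combined with the fact that on a regular scheme the divisorial log structure depends only on the support of the divisor. With that identification in hand, Kato's criterion applies verbatim and yields log smoothness; everything else is bookkeeping on the explicit monoid map $\varphi$.
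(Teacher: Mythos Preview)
Your proof is correct and follows the standard route via Kato's chart criterion for log smoothness. Note, however, that the paper does not actually prove this lemma: it is stated with a citation to \cite{MR2047700} and used as a black box, so there is no in-paper argument to compare against. Your write-up supplies exactly the argument one would expect behind such a citation---explicit charts $P=\mathbb{N}\to Q=\mathbb{N}^{m+1}$ with $1\mapsto e_0+\cdots+e_r$, the torsion-freeness of $\textsf{coker}\,\varphi^{\textsf{gp}}$, and the identification $S\times_{\textsf{Spec}\,\mathbb{Z}[P]}\textsf{Spec}\,\mathbb{Z}[Q]\cong\textsf{Spec}\,K^{\circ}[T_0,\dots,T_m]/(T_0\cdots T_r-\varpi)$---and the one point you flag as delicate (that the divisorial log structure on $V$ from $D|_V$ agrees with $g^*$ of the toric log structure on $M$) is handled correctly by condition~(2) in the definition together with the fact that $M$ is regular and $M\setminus N=\{T_0\cdots T_m=0\}$ is a strict normal crossings divisor. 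Nothing is missing.
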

\begin{remark}
Note that $(K^{\circ},(\varphi))$ is log regular scheme, then by \cite[Theorem 8.2]{KT93}, $(X,U)$ is log regular.
\end{remark}
\begin{example}
Let $({\overline{\textsf{M}}}_{0,n},D)$ is the moduli space of $n$-marked stable rational curves over a complete discrete valued field $K$, where $D=\overline{\textsf{M}}_{0,n} \smallsetminus \textsf{M}_{0,n}$ and $\mathcal{X}_{0,n} \defeq \overline{\mathcal{M}}_{0,n} \otimes_{\mathbb{Z}} K^{\circ}$, then $(\mathcal{X}_{0,n},\overline{D}+(\mathcal{X}_{0,n})_{s})$ is strictly semi-stable, so it's log regular log scheme. More details are discussed in section \ref{skeleton}.
\end{example}
\begin{situation}
\textbf{Moduli space of tropical curves}
\begin{definition}

\textit{Dual graph of a stable curve}. Let $(C;{x_1},{x_2} \cdots {x_n})$ be a $n$-marked stable  curve over an algebraic closed field $k$.
The \textit{dual graph} of $(C;{x_1},{x_2} \cdots {x_n})$ is a vertex-weighted, marked graph $(G,m,w)$ defined as follow:
\begin{enumerate}
    \item The vertices $v_i$ of $G$ are in correspondence with the irreducible components $X_i$ of $C$, with weight function $w(v_i)= \textsf{dim}\,                                      {\textsf{H}^1}({X_i},{\mathscr{O}_{X_i})}$.
    \item For every node $p$ of $C$, there is an edge $e_p$ between $v_i$ and $v_j$ if $p$ is in both the components $X_i$ and $X_j$.
    \item The $n$-marking function $m: \{1,2,\cdots,n\} \to V(G)$ sends $j$ to the vertex of $G$ corresponding to the component of $C$ supporting $p_j$
    \item $2w(v)-2+n_{v}>0$ where $n_{v}$ is the number of half-edges and marked points at $v$.
\end{enumerate}
\end{definition}
\begin{definition}
An \textit{tropical curve} is a graph $(G,m,w)$ equipped with a \textit{length function} $\ell: E(G) \to {{\mathbb{R}}_{ > 0}}$, now for a given tropical curve $(G,m,w)$, let $\textsf{Aut}(G,m,w)$ be the set of all permutations $\varphi: E(G) \to E(G)$ that arise from automorphism of $G$ that preserve $m$ and $w$. We define \[\overline {C(G,m,w)}  = { \mathbb{R}}_{ \geqslant 0}^{E(G)}/\textsf{Aut}(G,m,w)\]

\end{definition}
\begin{definition}
\textit{The moduli space of $n$-marked stable tropical curves }${\textsf{M}}_{g,n}^\textup{trop}$ is the complexes \[{\textsf{M}} _{g,n}^{\textup{trop}} = \bigsqcup \overline {C(G,m,w)} / \sim \] where the disjoint union is over all stable graphs with type $(g,n)$, and for two points $x$ and $x'$, $x \sim x'$ if they are equal after contracting all edges with length $0$.
\end{definition}
The tropical moduli space $\textsf{M}^{\textsf{trop}}_{0,n}$ can be considered as space of phylogenetic trees in the following sense:
\begin{definition}
A phylogenetic tree on $n$ leaves is a real weighted tree $(T,w)$. Where $T$ is a finite connected graph with no cycles and with no degree-two vertices, together with a labeling of its leaves in bijection with $[n]$. The weight function $w: E(T) \to \mathbb{R}$ is defined on the set of edges of $T$. We denote $\delta_{I}$ to represent the tropical curve in $\textsf{M}_{0,n}^{\textsf{trop}}$ with one edge and one of the vertexes have legs index by $I$.
\end{definition}

\end{situation}
In order to discuss an explicit description of the local section of tropicalization of $\textsf{M}_{0,n}^{\textsf{an}}$ with respect to the P\"ucker embedding, we record the combinatorial types of phylogenetic trees which have been developed in \cite{MR3263167}. We can use them to classify tropical curves with genus 0 and n-marked points.
\begin{definition}
A tree $T$ on $n$ leaves and $i,j$ are endpoints of leaves is called caterpillar type if all the vertices are within distance 1 of the central path.
\end{definition}
\begin{definition}\cite[Definition 4.6]{MR3263167}\label{def:orderWithCherryProperty} Let $i,j$ be a pair of indices, and let $\leqslant$ be a partial order on the set
  $[n]\smallsetminus \{i,j\}$. Let $T$ be a tree on $n$ leaves
  arranged as in the right of Figure~\ref{fig:graph}.  We say
  that $\leqslant$ has the \emph{cherry property on $T$} with respect to
  $i$ and $j$ if the following conditions hold:
\begin{enumerate}
\item Two leaves of different subtrees $T_a$ and $T_b$ cannot be
  compared by $\leqslant$.
\item The partial order $\leqslant$ restricts to a total order on the leaf
  set of each $T_a$, $a=1,\ldots, m$.
\item If $k<l< v$, then either $\{k,l\}$ or $\{l,v\}$ is a
  cherry of the quartet $\{i,k,l,v\}$ (and hence also of
  $\{j,k,l,v\}$).
\end{enumerate}
  \end{definition}
\begin{figure}
    \centering
\begin{tikzpicture}
\draw (-7.0,1.5) -- (-2.0,1.5);
\filldraw[black] (-7.0,1.5) circle (1pt) node[anchor=west] {};
\filldraw[black] (-6,1.5) circle (1pt) node[anchor=west] {};
\filldraw[black] (-5,1.5) circle (1pt) node[anchor=west] {};
\filldraw[black] (-3,1.5) circle (1pt) node[anchor=west] {};
\filldraw[black] (-2.0,1.5) circle (1pt) node[anchor=west] {};
\node at (-6,0.8) {\tiny $T_{1}$};
\node at (-5,0.8) {\tiny $T_{2}$};
\node at (-3,0.8) {\tiny $T_{m}$};

\node at (-7.2,1.5) {\tiny $i$};
\node at (-1.8,1.5) {\tiny $j$};
\draw (-6.4,0.5) -- (-5.6,0.5) -- (-6,1.5) -- (-6.4,0.5);
\draw (-5.4,0.5) -- (-4.6,0.5) -- (-5,1.5) -- (-5.4,0.5);
\draw (-3.4,0.5) -- (-2.6,0.5) -- (-3,1.5) -- (-3.4,0.5);

\node at (-4.5,1) {};
\node at (-3.5,1) {};
\node (v1) at (-4.5,1) {};
\node at (-3.5,1) {};
\node (v2) at (-3.5,1) {};
\draw [dash pattern=on 2pt off 3pt on 4pt off 4pt] (v1) edge (v2);
\draw [dash pattern=on 2pt off 3pt on 4pt off 4pt] (v2) edge (v1);
\node at (-7.5,2) {};
\node at (-6,1) {};
\node at (-5,1) {};
\node at (-3,1) {};
\end{tikzpicture}
    \caption{Trees with $n$ labelled endpoints}
    \label{fig:graph}
\end{figure}
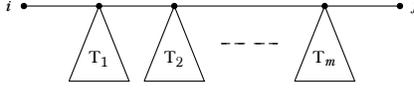
  The following lemma guarantee  the existence of partial orders with the cherry property for a given tree $T$.
  \begin{lemma}\cite[Lemma 4.7]{MR3263167}
   Fix a pair of indices $i,j$ and let $T$ be a tree with $n$ leaves, Then there exists a  partial order $\leqslant$ on the set $[n]\smallsetminus \{i,j\}$ that has the cherry property on $T$  with respect to $i,j$.
  \end{lemma}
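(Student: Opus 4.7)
The plan is to produce the partial order by performing a depth-first search on each of the subtrees $T_1,\ldots,T_m$ hanging off the central path from $i$ to $j$, and then to amalgamate the resulting total orders into a single partial order on $[n]\smallsetminus\{i,j\}$ in which elements of different subtrees are declared incomparable. This assignment immediately forces conditions (1) and (2) of Definition~\ref{def:orderWithCherryProperty}, so all the work lies in verifying the cherry condition (3).

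More precisely, for each $a=1,\ldots,m$ I would root $T_a$ at the vertex $v_a$ where it attaches to the central path, fix an arbitrary total ordering of the children at every internal vertex, and run a DFS from $v_a$; the first-visit order on the leaves gives a total order $<_a$ on the leaf set of $T_a$, and the claimed partial order $\leqslant$ is the disjoint union $\bigsqcup_a <_a$. Since two leaves in distinct subtrees are never related by any $<_a$, condition (1) is automatic; condition (2) is automatic because DFS produces a linear ordering of the leaves that it visits.

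The heart of the argument is then the claim that $\leqslant$ satisfies (3). Fix $k<l<v$ in the same $T_a$ and set $u=\textsf{lca}_{T_a}(k,l,v)$; for $x\in\{k,l,v\}$ write $u_x$ for the child of $u$ whose subtree contains $x$. Since $i$ lies outside $T_a$ and every path from $i$ into $T_a$ enters through $v_a$, the quartet topology of $\{i,k,l,v\}$ in $T$ agrees with that of $\{v_a,k,l,v\}$ in $T_a$, and so its cherry among $\{k,l,v\}$ is precisely the pair whose LCA in $T_a$ is strictly deeper than $u$. The defining property of DFS is that leaves of any subtree are visited consecutively: hence if $u_k=u_v$, then $l$, being sandwiched strictly between $k$ and $v$ in $<_a$, would also lie in that subtree, contradicting the minimality of $u$. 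Consequently $u_k\neq u_v$, and either $u_l=u_k$, yielding the cherry $\{k,l\}$, or $u_l=u_v$, yielding the cherry $\{l,v\}$.

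The step I expect to require the most care is the degenerate case where $u$ has three or more children and $l$ falls in a subtree $u_l$ distinct from both $u_k$ and $u_v$: the induced quartet then collapses to a star at $u$ and supports no strict cherry. The standard remedy is to pass to an arbitrary binary refinement $\widetilde{T}$ of $T$, run the DFS argument on $\widetilde{T}$, and then check that the cherry pair produced in $\widetilde{T}$ descends to an admissible cherry of the unresolved quartet in $T$ under the convention of \cite{MR3263167}; equivalently, one may restrict attention first to trivalent trees, which already parametrize the top-dimensional cones of $\mathscr{T}\textsf{M}_{0,n}$, and extend to the non-generic strata by a limiting argument.
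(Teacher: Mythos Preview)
The paper does not supply its own proof of this lemma; it merely quotes the statement from \cite[Lemma~4.7]{MR3263167}. So there is no in-paper argument to compare against, and your proposal should be read as a standalone proof.

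Your DFS construction is the standard one and is essentially what Cueto--H\"abich--Werner do: order the leaves of each pendant subtree $T_a$ by a depth-first traversal from the attaching vertex, and declare leaves in different $T_a$'s incomparable. Conditions (1) and (2) are then immediate, and your analysis of (3) via the least common ancestor $u=\textsf{lca}(k,l,v)$ is correct in the generic case.

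Your worry about the degenerate case --- where $u_k,u_l,u_v$ are three distinct children of $u$ and the induced quartet on $\{i,k,l,v\}$ is a star --- is overstated. In the convention of \cite{MR3263167} (and in the tropical-Pl\"ucker setting that motivates the definition), a star quartet has \emph{every} pair as a cherry: the relevant minimum in the four-point condition is attained at least twice for any pairing. Hence condition~(3) holds trivially in that case, and you do not need to pass to a binary refinement $\widetilde{T}$ or invoke a limiting argument. Simply add a sentence noting that if $u_k,u_l,u_v$ are pairwise distinct then the quartet $\{i,k,l,v\}$ is unresolved and both $\{k,l\}$ and $\{l,v\}$ are (degenerate) cherries; this closes the argument cleanly.
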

  \begin{example}\label{45tropical}
  The tropical moduli spaces  ${\textsf{M}} _{0,4}^{\textup{trop}}$ and ${\textsf{M}} _{0,5}^{\textup{trop}}$ are identified with the spaces of phylogenetic trees with all combinatorial types of trees are caterpillar. For $n \geqslant 6$, combinatorial types of trees may not be caterpillar type. 
\begin{figure}[h]
\centering
\begin{tikzpicture}[scale=1.3]
\tikzstyle{myedgestyle} = [-latex]

\draw [-latex](0,0){} -- (0,2);
\draw [-latex](0,0)-- (2,0);
\draw [-latex](0,0)-- (-1.3,-1.35);
\filldraw[black] (0,0) circle (1pt) node[anchor=west] {};
\filldraw[black] (-0.2,2.1) circle (0.3pt) node[anchor=west] {};
\filldraw[black] (0.2,2.1) circle (0.3pt) node[anchor=west] {};
\filldraw[black] (2.2,0) circle (0.3pt) node[anchor=west] {};
\filldraw[black] (2.6,0) circle (0.3pt) node[anchor=west] {};
\filldraw[black] (-2,-1.3) circle (0.3pt) node[anchor=west] {};
\filldraw[black] (-1.6,-1.3) circle (0.3pt) node[anchor=west] {};
\filldraw[black] (-0.5,0.2) circle (0.3pt) node[anchor=west] {};

\node at (0.5,2.3) {\tiny 3};
\node at (0.5,1.9) {\tiny 4};
\node at (-0.5,1.9) {\tiny 1};
\node at (-0.5,2.3) {\tiny 2};
\node at (-0.8,0.4) {\tiny 1};
\node at (-0.8,0) {\tiny 2};
\node at (-0.2,0.4) {\tiny 3};
\node at (-0.2,0) {\tiny 4};
\node at (1.9,0.2) {\tiny 1};
\node at (1.9,-0.2) {\tiny 3};
\node at (2.9,0.2) {\tiny 2};
\node at (2.9,-0.2) {\tiny 4};
\node at (-2.3,-1.1) {\tiny 1};
\node at (-2.3,-1.5) {\tiny 4};
\node at (-1.3,-1.1) {\tiny 2};
\node at (-1.3,-1.5) {\tiny 3};

\draw [-] plot[smooth, tension=.7] coordinates {(-0.2,2.1) (0.2,2.1)};

\draw [-] plot[smooth, tension=.7] coordinates {(-0.2,2.1) (-0.4,2.3)};
\draw [-] plot[smooth, tension=.7] coordinates {(-0.2,2.1) (-0.4,1.9)};
\draw [-] plot[smooth, tension=.7] coordinates {(0.2,2.1) (0.4,2.3)};
\draw [-] plot[smooth, tension=.7] coordinates {(0.2,2.1) (0.4,1.9)};

\draw [-] plot[smooth, tension=.7] coordinates {(2.2,0) (2.6,0)};
\draw [-] plot[smooth, tension=.7] coordinates {(2.2,0) (2,0.2)};
\draw [-] plot[smooth, tension=.7] coordinates {(2.2,0) (2,-0.2)};
\draw [-] plot[smooth, tension=.7] coordinates {(2.6,0) (2.8,0.2)};
\draw [-] plot[smooth, tension=.7] coordinates {(2.6,0) (2.8,-0.2)};

\draw [-] plot[smooth, tension=.7] coordinates {(-2,-1.3) (-1.6,-1.3)};
\draw [-] plot[smooth, tension=.7] coordinates {(-2,-1.3) (-2.2,-1.1)};
\draw [-] plot[smooth, tension=.7] coordinates {(-2,-1.3) (-2.2,-1.5)};
\draw [-] plot[smooth, tension=.7] coordinates {(-1.6,-1.3) (-1.4,-1.1)};
\draw [-] plot[smooth, tension=.7] coordinates {(-1.6,-1.3) (-1.4,-1.5)};

\draw [-] plot[smooth, tension=.7] coordinates {(-0.5,0.2) (-0.7,0.4)};
\draw [-] plot[smooth, tension=.7] coordinates {(-0.5,0.2) (-0.7,0)};
\draw [-] plot[smooth, tension=.7] coordinates {(-0.5,0.2) (-0.3,0.4)};
\draw [-] plot[smooth, tension=.7] coordinates {(-0.5,0.2) (-0.3,0)};

\end{tikzpicture}
\caption{Cone complex ${\textsf{M}} _{0,4}^{\textup{trop}}$}

\begin{tikzpicture}[scale=1.3]
\draw (18:2cm) -- (90:2cm) -- (162:2cm) -- (234:2cm) --
(306:2cm) -- cycle;
\draw (18:1cm) -- (162:1cm) -- (306:1cm) -- (90:1cm) --
(234:1cm) -- cycle;
\foreach \x in {18,90,162,234,306}{
\draw (\x:1cm) -- (\x:2cm);
\draw[black,fill=black] (\x:2cm) circle (1pt);
\draw[black,fill=black] (\x:1cm) circle (1pt);
}
\node[label=$\delta_{1,2}$] at (18:2cm) {}; 
\node[label=$\delta_{3,5}$] at (90:2cm) {}; 
\node[label=$\delta_{1,4}$] at (162:2cm) {}; 
\node[label=left:$\delta_{2,5}$] at (234:2cm) {}; 
\node[label=right:$\delta_{3,4}$] at (306:2cm) {}; 
\node[label=above:$\delta_{4,5}$] at (18:1cm) {}; 
\node[label=right:$\delta_{2,4}$] at (90:1cm) {}; 
\node[label=$\delta_{2,3}$] at (162:1cm) {}; 
\node[label=left:$\delta_{1,3}$] at (234:1cm) {}; 
\node[label=right:$\delta_{1,5}$] at (306:1cm) {}; 
\end{tikzpicture}
\caption{Cone complex ${\textsf{M}} _{0,5}^{\textup{trop}}$}

\end{figure}
  \end{example}
  We give a similar embedding defined in \cite{DB04}:
\begin{definition}\label{trmim}
The embedding of ${\textsf{M}} _{0,n}^{\textup{trop}}$ into ${{\mathbb{R}} ^{n \choose 2}}$ as follows: \[P:{\textsf{M}} _{0,n}^{\textup{trop}} \to {{\mathbb{R}}^{n \choose 2}}\]
\[x \mapsto {( -\frac{1}{2} d(i,j))_{(i,j)}}\]
where $d(i,j)$ denotes the distance between the $i$-th and $j$-th leaf of the tropical curve by the length function.
We define a linear map as follows:
\[L:{\mathbb{R}^n} \to {\mathbb{R}^{n \choose 2}}\]
\[({a_1}, \cdots ,{a_n}) \mapsto {(a{}_i + {a_j})_{(i,j)}}\]
\end{definition}
Now consider the composition of maps:
\begin{equation*}
    \begin{tikzcd}
{\textsf{M}} _{0,n}^{\textup{trop}} \arrow [r,"P"]  & { {\mathbb{R}}^{n \choose 2}}  \arrow[r,"\pi"] & { {\mathbb{R}}^{n \choose 2}}/  \textsf{im}(L)
\end{tikzcd}
\end{equation*}
Then we can see that the image of $\pi  \circ P$ coincide with the tropicalization of ${{\textsf{M}}}_{0,n}^{\textup{an}}$ through the Plücker embedding and become \textit{a space of phylogenetic trees}
\cite{DB04}. In the rest of this chapter, we will still use $\textsf{M}_{0,n}^{\textsf{trop}}$ to denote the image of $\pi  \circ P$ and $\mathscr{T}(-)$ to represent any geometric tropicalization for a fixed torus or toric embedding. 

\end{situation}
\section{Faithful tropicalization of ${\textsf{M}}_{0,n}$ }\label{sec:faithfultrop}
In this section, we describe the faithful tropicalization of ${\textsf{M}}_{0,n}$ following the faithful tropicalization of $\textsf{Gr}(2,n)$ in \cite{MR3263167}. Specifically, there exists a section $\sigma$ of the tropicalization $\textsf{trop}: {{\textsf{M}}}_{0,n}^{\textup{an}} \to \mathscr{T}\textsf{M}_{0,n}$ with respect with the torus embedding $\mathbb{G}^{{{n}\choose{2}}}_{m,K}/\mathbb{G}^{1}_{m,K}$ via Pl\"ucker embedding.

\begin{situation}
In the paper\cite{MR3263167}, the authors construct a section of  tropicalization map $\textsf{trop}:{\textsf{Gr}(2,n)}^{\textsf{an}} \to \mathscr{T}{\textsf{Gr}(2,n)}$, that's to say we have a continuous section $\sigma: \mathscr{T}{\textsf{Gr}(2,n)} \to {\textsf{Gr}(2,n)}^{\textsf{an}}$, so we can consider the cone complex $\mathscr{T}{\textsf{Gr}(2,n)}$ as a closed subset of the $K$-analytic space $\textsf{Gr}(2,n)^{\textsf{an}}$. Now by the Gelfand-MacPherson correspondence, we have $\textsf{Gr}_{0}(2,n)/\mathbb{G}^{n}_{m,K} \cong \textsf{M}_{0,n}$, where $\textsf{Gr}_{0}(2,n)$ is the affine open subvariety of $\textsf{Gr}(2,n)$ with non-vansihing Pl\"ucker coordinates and the action of the tours is defined by the following morphism:
\begin{equation*}
    \textsf{Gr}_{0}(2,n) \times_{K} \mathbb{G}^{n}_{m,K} \to  \textsf{Gr}_{0}(2,n)
\end{equation*}
\begin{equation*}
    ((p_{kl})_{kl}, (t_i)_{i \in [n]}) \mapsto (t_{k}t_{l}p_{kl})_{kl}
\end{equation*}
For $\textsf{M}_{0,n}$, we have following result:
\end{situation}
\begin{lemma}\cite[cf. Corollary 4]{MR3263167}
The tropicalization map $\textsf{trop}: \textsf{M}^{\textsf{an}}_{0,n} \to \mathscr{T}\textsf{M}_{0,n} \cong \mathscr{T}\textsf{Gr}_{0}(2,n)/\overline{L}$ is faithful, the section $\sigma'$ is induced by the section $\sigma$ for the tropical Grassmannian and $\overline{L}$ is the linearity space of $\mathscr{T}{\textsf{Gr}_{0}(2,n)}$.
\end{lemma}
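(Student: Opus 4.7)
The plan is to transport the section for the Grassmannian down to $\textsf{M}_{0,n}$ via the Gelfand--MacPherson correspondence $\textsf{Gr}_{0}(2,n)/\mathbb{G}^{n}_{m,K} \cong \textsf{M}_{0,n}$. First I would set up the relevant commutative square: the quotient map $q : \textsf{Gr}_{0}(2,n) \to \textsf{M}_{0,n}$ analytifies to a surjection $q^{\textsf{an}}$, and is compatible with the Pl\"ucker/torus coordinates used for tropicalization, so one obtains a commutative diagram
\begin{equation*}
\begin{tikzcd}
\textsf{Gr}_{0}(2,n)^{\textsf{an}} \arrow[r,"\textsf{trop}"] \arrow[d,"q^{\textsf{an}}"',twoheadrightarrow] & \mathscr{T}\textsf{Gr}_{0}(2,n) \arrow[d,twoheadrightarrow] \\
\textsf{M}_{0,n}^{\textsf{an}} \arrow[r,"\textsf{trop}"'] & \mathscr{T}\textsf{Gr}_{0}(2,n)/\overline{L}
\end{tikzcd}
\end{equation*}
where the right vertical arrow is the quotient by the lineality space $\overline{L}$. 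Here $\overline{L}$ is identified as the image of the tropicalization of the torus action, i.e.\ of the linear map $L$ from Definition~\ref{trmim} applied in the Pl\"ucker coordinates: $(a_{1},\dots,a_{n}) \mapsto (a_{i}+a_{j})_{(i,j)}$. The identification $\mathscr{T}\textsf{M}_{0,n} \cong \mathscr{T}\textsf{Gr}_{0}(2,n)/\overline{L}$ then follows from the general fact that tropicalization is compatible with good quotients by subtori.

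Next I would define $\sigma'$ as the composition $\sigma' = q^{\textsf{an}} \circ \sigma \circ s$, where $s$ is any set-theoretic splitting of the linear surjection $\mathscr{T}\textsf{Gr}_{0}(2,n) \twoheadrightarrow \mathscr{T}\textsf{Gr}_{0}(2,n)/\overline{L}$. The continuity of $\sigma'$ is immediate from the continuity of $\sigma$ proved in \cite{MR3263167} together with the quotient topology on $\mathscr{T}\textsf{M}_{0,n}$; the key point is well-definedness, i.e.\ that the composite does not depend on the choice of lift $s$.

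For well-definedness, suppose $w, w' \in \mathscr{T}\textsf{Gr}_{0}(2,n)$ satisfy $w' = w + L(a)$ for some $a \in \mathbb{R}^{n}$. I would show that the valuations $\sigma(w)$ and $\sigma(w')$ lie in the same $\mathbb{G}^{n}_{m,K}$-orbit of $\textsf{Gr}_{0}(2,n)^{\textsf{an}}$, and therefore have the same image under $q^{\textsf{an}}$. Concretely, the section $\sigma$ of \cite{MR3263167} is built piecewise from monomial valuations on the Pl\"ucker coordinates prescribed by the coordinate $w_{ij}$ in each relevant quartet; rescaling by the diagonal action $(p_{kl}) \mapsto (t_{k}t_{l} p_{kl})$ with $|t_{i}| = \exp(-a_{i})$ translates the weight vector precisely by $L(a)$, so the construction is equivariant in the sense that $\sigma(w + L(a))$ equals $\sigma(w)$ precomposed with this torus element. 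Passing to the quotient by $\mathbb{G}^{n}_{m,K}$ on the analytic side collapses this ambiguity, giving a well-defined continuous section $\sigma'$ with $\textsf{trop} \circ \sigma' = \textsf{id}$; faithfulness of tropicalization on $\textsf{M}_{0,n}$ is then inherited from faithfulness on $\textsf{Gr}_{0}(2,n)$.

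The main obstacle is the equivariance verification in the last step: one must track carefully how the combinatorial recipe for $\sigma$ in Definition~\ref{def:orderWithCherryProperty} (choice of quartets, partial orders with cherry property, and the resulting monomial valuations) interacts with the torus rescaling, and check that the piecewise structure transforms uniformly across the different combinatorial types of trees. Once this compatibility is established on each maximal cone of $\mathscr{T}\textsf{Gr}_{0}(2,n)$, gluing on the boundary is automatic from the continuity of $\sigma$ already shown in \cite{MR3263167}.
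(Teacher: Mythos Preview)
The paper does not give an independent proof of this lemma: it is cited from \cite{MR3263167}, and the surrounding text together with Proposition~\ref{localsect} simply unpacks how $\sigma'$ is obtained cone by cone, by \emph{restricting} the seminorm $\sigma^{(ij)}_{T,I}(x)$ from $R(ij)$ to its localization and then to the invariant subring $\Gamma(\textsf{Gr}_{0}(2,n))^{\mathbb{G}^{n}_{m,K}}$. Your overall strategy is the same idea viewed from the other side: rather than restricting to invariants, you check that the composite $q^{\textsf{an}}\circ\sigma$ is constant along $\overline{L}$-cosets and then invoke the universal property of the quotient. Both approaches are valid and essentially equivalent.

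There is, however, one genuine imprecision in your well-definedness step. Over a \emph{discretely} valued field $K$, you cannot in general find a $K$-rational torus element $t\in\mathbb{G}^{n}_{m,K}(K)$ with $|t_{i}|=\exp(-a_{i})$ for an arbitrary $a\in\mathbb{R}^{n}$, so $\sigma(w)$ and $\sigma(w+L(a))$ need not lie in the same $\mathbb{G}^{n}_{m,K}$-orbit in the literal sense. The conclusion $q^{\textsf{an}}(\sigma(w))=q^{\textsf{an}}(\sigma(w+L(a)))$ is nevertheless correct, because $q^{\textsf{an}}$ only sees the restriction of the seminorm to torus-invariant functions, and your monomial computation shows that $\sigma(w)$ and $\sigma(w+L(a))$ agree on every invariant monomial $\prod u_{kl}^{m_{kl}}$ with $\sum_{l} m_{kl}=0$ for each $k$. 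So either (i) replace the orbit claim by the direct verification on invariants, exactly as in the proof of Proposition~\ref{localsect}, or (ii) pass to the analytic torus $(\mathbb{G}^{n}_{m,K})^{\textsf{an}}$, whose Gauss points realize arbitrary real absolute values and act on $\textsf{Gr}_{0}(2,n)^{\textsf{an}}$ in the required way. With this correction your argument goes through; the equivariance across different combinatorial types that you flag as the ``main obstacle'' is immediate once phrased in terms of invariants, since each local piece $\sigma^{(ij)}_{T,I}$ is a monomial valuation in the Pl\"ucker ratios and the rescaling acts monomially.
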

\begin{remark}
In \cite{DB04}, Speyer and Sturmfels show that $\mathscr{T}{\textsf{M}_{0,n}}$ coincide with $\textsf{M}^{\textsf{trop}}_{0,n}$, thus we have a faithful tropicalization map $\textsf{trop}: \textsf{M}^{\textsf{an}}_{0,n} \to \textsf{M}^{\textsf{trop}}_{0,n}$. 
\end{remark}

\begin{situation}\label{plucker}
\textbf{Local section map of $\textsf{trop}: \textsf{M}^{\textsf{an}}_{0,n} \to \textsf{M}^{\textsf{trop}}_{0,n}$.}
\par
Recall the consturction in \cite{MR3263167} for Grassmannian of planes, let $\varphi: \textsf{Gr}(2,n) \hookrightarrow \mathbb{P}^{{{n}\choose{2}}-1}_{K}=\textsf{Proj}K[p_{ij}\,|\,ij \in {{[n]\choose{2}}}]$ be the \textit{Pl\"ucker embedding} and $\{U_{ij}\}_{ij}$ be an affine open covering of $\textsf{Gr}(2,n)$, where $U_{ij} \defeq \varphi^{-1}(D_{+}(p_{ij}))$. Let $\textsf{Spec}\,R(ij)=U_{ij}$, then we have :
\begin{equation}
    R(ij)=K[u_{kl}\,|\,kl \in I(ij)]
\end{equation}
where $I(ij)=\{il,jl\,|\,l \ne i,j\}$ and $u_{kl}=p_{kl}/p_{ij}$ for every $kl \ne ij$. We have $u_{kl}=u_{ik}u_{jl}-u_{il}u_{jk}$ for $k,l \notin \{i,j\}$ by the pl\"ucker relations. 
\par
Consider $\mathscr{T}{U_{ij}}$ as the image of $U^{\textsf{an}}_{ij} \subseteq {\textsf{Gr}(2,n)}^{\textsf{an}}$ under the tropicalization of projective varieties, we get:
\begin{equation*}
    \mathscr{T}{U_{ij}}=\{x \in \mathscr{T}{\textsf{Gr}(2,n)}\,|\,x_{ij} \ne  - \infty \}
\end{equation*}
Thus for any $x \in \mathscr{T}{\textsf{Gr}_{0}(2,n)} $, we have $x_{ij} \ne  - \infty$ for each component $x_{ij}$ by $\mathscr{T}{\textsf{Gr}_{0}(2,n)} \subseteq \bigcap_{ij}{\mathscr{T}{U_{ij}}}$.
\par
Note that $\mathscr{T}{\textsf{Gr}_{0}(2,n)}$ could be identified with the space of phylogenetic trees, we have $\mathscr{T}{\textsf{Gr}_{0}(2,n)=\bigcup_{T}{\mathscr{C}_{T}}}$, where $\mathscr{C}_T$ is a cone in $\mathscr{T}{\textsf{Gr}_{0}(2,n)}$ associated to a combinatorial type of a phylogenetic tree $T$ on $n$ leaves. Correspondently, we have $\textsf{M}^{\textsf{trop}}_{0,n}=\bigcup_{T}({\mathscr{C}_{T}/{\overline{L}})}$ by \cite[Lemma 7.2.]{MR3263167}, we denote $\mathscr{C}_{T}/{\overline{L}} \defeq {\mathscr{C}'_{T}}$ and for \textit{caterpillar type} tree $T$ we can construct a local section on $\mathscr{C}'_{T}$ via lifting a local section on $\overline{\mathscr{C}_T} \cap \mathscr{T}{U_{ij}}$ which is the collections of limit points of $\mathscr{C}_{T}$ intersect with $\mathcal{T}U_{i,j}$; For arbitrary type tree $T$, we can lift local sections via a stratification $\{\mathscr{C}_{T,J}^{(ij)}\}_{T,J}$ of $\mathscr{T}U_{ij}$ which are following datum:
\begin{itemize}
    \item $J(x) \defeq \{kl \in {[n] \choose 2} \,\big|\, x_{kl}=-\infty \}$, for any $x \in \mathscr{T}U_{ij}$. $J(ij) \defeq J(x) \cap I(ij)$.
    \item monomial prime ideals $\mathfrak{a}_{J(ij)}=\left\langle {u_{kl}\,|\,kl \in J(ij)} \right\rangle $ of $R(ij)$.
    \item $Y_{J(ij)}=\textsf{Spec}(\frac{R(ij)}{\mathfrak{a}_{J(ij)}}) \hookrightarrow \mathbb{P}^{{n \choose 2}-1}_{K}$.
    \item $\mathscr{C}_{T,J}^{(ij)} \defeq \overline{\mathscr{C}_{T}} \cap \mathscr{T}Y_{J(ij)} \cap \bigcap_{kl \in I(ij) \smallsetminus J}\mathscr{T}U_{kl} \hookrightarrow \mathscr{T}U_{ij}$.
\end{itemize}
\begin{definition}
Let $I=I(ij)$, we first define the projection:
\begin{equation*}
    \pi_{I}: \mathscr{T}{U_{ij}} \to \overline{\mathbb{R}}^{I}
\end{equation*}
\begin{equation*}
    [(x_{kl})_{kl \in {[n] \choose 2}}] \mapsto (x_{kl}-x_{ij})_{kl \in I}
\end{equation*}
and define a \textit{skeleton} map for affine $n$-space:
\begin{equation*}
    \delta_{n}: \overline{\mathbb{R}}^{n} \to \mathbb{A}^{n,\textsf{an}}_{k} 
\end{equation*}
\begin{equation*}
    \rho \mapsto \delta_{n}(\rho)(\sum_{\alpha}c_{\alpha}x^{\alpha})=\textsf{max}\{ {|c_{\alpha}|}\textsf{exp}(\sum^{n}_{i=1}\rho_{i}\alpha_{i})\}
\end{equation*}
Now we define a map:
\begin{equation*}
    \sigma^{(ij)}_{I} \defeq \delta_{I} \circ \pi_{I}
\end{equation*}
\end{definition}
\begin{lemma}\cite[Proposition 1]{MR3263167}
Let $T$ be the caterpillar tree on $n$ leaves with endpoints $i$ and $j$, then \begin{equation}
    \sigma^{(ij)}_{T,I} \defeq  \sigma^{(ij)}_{I}: \overline{\mathscr{C}_T} \cap \mathscr{T}{U_{ij}} \hookrightarrow \mathscr{T}{U_{ij}} \to U^{\textsf{an}}_{ij}
\end{equation}
is a section of the tropicalization map over $\overline{\mathscr{C}_T} \cap{\mathscr{T}{U_{ij}}}$.
\end{lemma}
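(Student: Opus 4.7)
The plan is to verify the two defining properties of a section: that $\sigma_I^{(ij)}(x)\defeq\delta_I(\pi_I(x))$ defines a point of $U^{\textsf{an}}_{ij}$, and that $\textsf{trop}\circ\sigma_I^{(ij)}$ is the identity on $\overline{\mathscr{C}_T}\cap\mathscr{T}U_{ij}$. The first is standard: $R(ij)=K[u_{kl}\mid kl\in I]$ is a polynomial ring and, for $\rho\defeq\pi_I(x)$, the prescription $\delta_I(\rho)$ is the monomial (Gauss) seminorm with log-weights $\rho$, which is automatically multiplicative on $R(ij)$. For the section property, I use the translation freedom in $\overline{\mathbb{R}}^{\binom{[n]}{2}}/\overline{L}$ to normalize $x_{ij}=0$. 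For $kl\in I$ the equality $-\log|u_{kl}|_{\delta_I(\rho)}=\rho_{kl}=x_{kl}$ is immediate. For $kl$ with $k,l\notin\{i,j\}$, one uses the Pl\"ucker relation $u_{kl}=u_{ik}u_{jl}-u_{il}u_{jk}$ together with the ultrametric inequality
\[
|u_{kl}|_{\delta_I(\rho)}\;\leqslant\;\max\bigl(|u_{ik}u_{jl}|_{\delta_I(\rho)},\,|u_{il}u_{jk}|_{\delta_I(\rho)}\bigr),
\]
which is an equality as soon as the two summands take different values.

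The combinatorial heart of the argument is the 4-point condition on a caterpillar. For $T$ a caterpillar with endpoints $i,j$, each remaining leaf $k$ is attached to a unique vertex $v_k$ on the central path; for $k,l\neq i,j$ with $v_k$ strictly closer to $i$ than $v_l$, a direct length computation on $T$ gives
\[
d(i,j)+d(k,l)\;=\;d(i,l)+d(j,k)\;=\;d(i,k)+d(j,l)+2d(v_k,v_l).
\]
Applying $x_{ab}=-\tfrac12 d(a,b)$ yields $x_{ij}+x_{kl}=x_{il}+x_{jk}<x_{ik}+x_{jl}$, so that $|u_{il}u_{jk}|_{\delta_I(\rho)}>|u_{ik}u_{jl}|_{\delta_I(\rho)}$, the ultrametric inequality above is an equality, and
\[
-\log|u_{kl}|_{\delta_I(\rho)}\;=\;x_{il}+x_{jk}\;=\;x_{ij}+x_{kl}\;=\;x_{kl},
\]
which matches the $kl$-coordinate of $x$. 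All coordinates of $\textsf{trop}(\delta_I(\rho))$ therefore agree with $x$.

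The main obstacle I anticipate is the behavior over the boundary $\overline{\mathscr{C}_T}\setminus\mathscr{C}_T$ inside $\mathscr{T}U_{ij}$, where either an edge length collapses to $0$ (so some $v_k=v_l$ and the strict inequality above becomes an equality, so the ultrametric bound is not directly forced to be attained by the above argument) or some $x_{kl}$ tends to $-\infty$. Both loci form closed, lower-dimensional strata of $\overline{\mathscr{C}_T}\cap\mathscr{T}U_{ij}$. The first is handled by continuity of $\rho\mapsto\delta_I(\rho)$ together with persistence of tropical Pl\"ucker equalities under limits, while the second is handled by the natural extension of the skeleton map $\delta_I\colon\overline{\mathbb{R}}^I\to\mathbb{A}^{I,\textsf{an}}_K$ to infinite entries, which lands in the corresponding coordinate subscheme of $U_{ij}^{\textsf{an}}$ in a way compatible with tropicalization.
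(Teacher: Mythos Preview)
The paper does not give its own proof of this lemma; it simply cites \cite[Proposition~1]{MR3263167} and moves on. So there is nothing in the present paper to compare your argument against directly. Your sketch is essentially the standard Cueto--Habich--Werner argument: verify that the Gauss seminorm $\delta_I(\rho)$ is a multiplicative point of $U_{ij}^{\textsf{an}}$, and then check the section identity coordinate by coordinate using the Pl\"ucker relation and the four-point condition on the caterpillar $T$.

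One remark on the boundary concern you raise. You frame the computation of $|u_{kl}|_{\delta_I(\rho)}$ for $k,l\notin\{i,j\}$ via the ultrametric inequality, which only gives equality when the two monomial terms have distinct values; this is why you worry about the degenerate stratum where $v_k=v_l$. But $\delta_I(\rho)$ is by \emph{definition} the monomial (Gauss) seminorm on the polynomial ring $R(ij)=K[u_{ab}\mid ab\in I]$: for any $f=\sum_\alpha c_\alpha u^\alpha$ one has $\delta_I(\rho)(f)=\max_\alpha |c_\alpha|\exp(\langle\alpha,\rho\rangle)$ on the nose, with no cancellation possible even when several monomials attain the maximum. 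Since $u_{kl}=u_{ik}u_{jl}-u_{il}u_{jk}$ is a sum of two \emph{distinct} monomials in the free generators, the equality $\delta_I(\rho)(u_{kl})=\max\bigl(\delta_I(\rho)(u_{ik}u_{jl}),\delta_I(\rho)(u_{il}u_{jk})\bigr)$ holds identically on all of $\overline{\mathscr{C}_T}\cap\mathscr{T}U_{ij}$, including the loci where edge lengths collapse. Your continuity argument is therefore valid but unnecessary. The extension to coordinates equal to $\pm\infty$ is handled exactly as you indicate, by the natural extension of $\delta_I$ to $\overline{\mathbb{R}}^I$.

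A second minor point: there is a sign convention mismatch between the paper's stated definition of $\delta_n$ (using $\exp(+\sum\rho_i\alpha_i)$) and the embedding $P$ (using $-\tfrac12 d(i,j)$); your implicit use of $-\log|u_{kl}|=\rho_{kl}$ is the one that makes the section identity come out correctly, and is the convention actually used in \cite{MR3263167}.
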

\begin{lemma}\cite[Theorem 4.16]{MR3263167}
There is a local section of tropicalization over $\mathscr{C}_{T,J}^{(ij)}$ for arbitrary type tree $T$.
\end{lemma}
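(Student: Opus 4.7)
The plan is to mimic the caterpillar construction of Lemma 4.14 (stated above as the section $\sigma^{(ij)}_{T,I}$) but apply it on the residue ring obtained after killing the monomial ideal $\mathfrak{a}_{J(ij)}$, using the cherry-property partial order of Definition~\ref{def:orderWithCherryProperty} as the combinatorial input that replaces ``caterpillar''.

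\textbf{Step 1: Reduction to the stratum.} By definition, a point $x\in \mathscr{C}_{T,J}^{(ij)}$ has $x_{kl}=-\infty$ for $kl\in J(ij)$ and $x_{kl}\in \mathbb{R}$ for $kl\in I(ij)\setminus J$. Since the stratum lies in $\mathscr{T}Y_{J(ij)}$ and $Y_{J(ij)}=\textsf{Spec}(R(ij)/\mathfrak{a}_{J(ij)})$, any candidate section must factor through $Y_{J(ij)}^{\textsf{an}}\cap \bigcap_{kl\in I(ij)\setminus J} U_{kl}^{\textsf{an}}$. I would therefore work over the Laurent ring $R(ij)/\mathfrak{a}_{J(ij)}$ in the surviving variables $\{u_{kl}: kl\in I(ij)\setminus J\}$.

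\textbf{Step 2: Define the section.} For $x\in \mathscr{C}_{T,J}^{(ij)}$ and $f=\sum_\alpha c_\alpha u^{\alpha}\in R(ij)/\mathfrak{a}_{J(ij)}$ (only $\alpha$ supported on $I(ij)\setminus J$ survive), set
\[
\sigma_{T,J}^{(ij)}(x)(f)\;=\;\max_{\alpha}|c_{\alpha}|\,\textsf{exp}\Bigl(\sum_{kl\in I(ij)\setminus J}(x_{kl}-x_{ij})\,\alpha_{kl}\Bigr).
\]
This is the analogue of $\delta_{I}\circ\pi_{I}$ after projecting away the infinite coordinates and passing to $Y_{J(ij)}$.

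\textbf{Step 3: Check compatibility with the Plücker relations.} The only nontrivial content is that the Gauss-type seminorm above actually defines a point of $Y_{J(ij)}^{\textsf{an}}$, that is, it respects $u_{kl}=u_{ik}u_{jl}-u_{il}u_{jk}$ for all $k,l\notin\{i,j\}$. Fix a partial order $\leqslant$ on $[n]\setminus\{i,j\}$ with the cherry property on $T$, which exists by Lemma 4.7. For each three-term Plücker relation I would run through the cases dictated by $\leqslant$:
\begin{itemize}
\item If both monomials $u_{ik}u_{jl}$ and $u_{il}u_{jk}$ have a factor in $\mathfrak{a}_{J(ij)}$, then the relation holds trivially on $Y_{J(ij)}$ and so does its image under the seminorm.
\item Otherwise the cherry property forces $\{k,l\}$ (or $\{l,v\}$, with the appropriate relabelling) to be a cherry of a quartet involving $i$ or $j$. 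Translating this cherry information into the four-point condition that cuts out $\mathscr{C}_T$ gives the strict tropical inequality between the two monomials, so exactly one dominates; the surviving monomial tropicalises to $x_{kl}-x_{ij}$, matching $u_{kl}$.
\end{itemize}
This dominance argument is the one place where ``caterpillar'' is not needed and is replaced by the cherry property of $\leqslant$.

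\textbf{Step 4: Continuity and the section identity.} Continuity of $\sigma_{T,J}^{(ij)}$ on the closed stratum is immediate from the $\max$-formula and the continuity of $\pi_{I\setminus J}$. For the section identity $\textsf{trop}\circ\sigma_{T,J}^{(ij)}=\textsf{id}_{\mathscr{C}_{T,J}^{(ij)}}$, evaluating on the coordinate functions $u_{kl}$ returns $x_{kl}-x_{ij}$ for $kl\in I(ij)\setminus J$ and $-\infty$ for $kl\in J(ij)$, which reproduces $\pi_{I}(x)$, hence $x$ after reintroducing the homogeneous coordinate $p_{ij}$.

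\textbf{Main obstacle.} The serious step is Step 3: without the cherry property one cannot exclude the degenerate situation where the two monomials on the right of a Plücker relation attain the same tropical value but their difference does \emph{not} match the tropical value of $u_{kl}$, which would prevent $\delta\circ\pi$ from descending to $Y_{J(ij)}^{\textsf{an}}$. The cherry-property case analysis on quartets $\{i,k,l,v\}$ and $\{j,k,l,v\}$, combined with the structure of $\mathscr{C}_T$ as a space of phylogenetic trees, is what ultimately rescues the construction; once this combinatorics is in place, gluing local sections across the strata $\{\mathscr{C}_{T,J}^{(ij)}\}_{T,J}$ is formal because overlapping strata share the same $\pi_{I}$-image modulo the relevant monomial ideal.
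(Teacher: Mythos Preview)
There is a genuine gap in Step~3. The cherry property of Definition~\ref{def:orderWithCherryProperty} concerns quartets $\{i,k,l,v\}$ with three leaves drawn from $[n]\setminus\{i,j\}$; it says nothing about the quartets $\{i,j,k,l\}$ that govern the Pl\"ucker relations $u_{kl}=u_{ik}u_{jl}-u_{il}u_{jk}$ you must control in $R(ij)$. When $T$ is not a caterpillar with endpoints $i,j$, some subtree $T_a$ contains two leaves $k,l$; the induced quartet on $\{i,j,k,l\}$ then has split $ij\,|\,kl$, so on $\mathscr{C}_T$ one has $x_{ik}+x_{jl}=x_{il}+x_{jk}>x_{ij}+x_{kl}$. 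Your Gauss seminorm is by definition the term-wise maximum, hence it returns $\sigma(x)(u_{kl})=\textsf{exp}(x_{ik}+x_{jl}-2x_{ij})$, strictly larger than the required $\textsf{exp}(x_{kl}-x_{ij})$, and the identity $\textsf{trop}\circ\sigma=\textsf{id}$ fails at the $kl$-coordinate. No appeal to the partial order $\leqslant$ rescues this, because $\leqslant$ organises triples in $[n]\setminus\{i,j\}$ and carries no information about the quartet containing both $i$ and $j$.

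The argument in \cite{MR3263167}, to which the paper simply defers, does not try to verify the relations in the coordinates $I(ij)\setminus J$. Instead it \emph{changes the coordinate set}: one replaces $I(ij)$ by a tree- and stratum-dependent set $I=I(ij,T,J)$ of the same size $2(n-2)$, built from the cherry-property partial order (\cite[Proposition~4.10]{MR3263167}; this is precisely what the present paper points to in the sentence following the lemma). The ring $R(ij)$ is again polynomial in $\{u_{kl}:kl\in I\}$, and the point of the construction is that for this choice of $I$ every remaining Pl\"ucker coordinate, expressed in the new generators, has a unique dominant monomial on $\mathscr{C}_{T,J}^{(ij)}$. So the cherry property enters not as a tie-breaker for the original relations, but as the recipe for an alternative polynomial presentation of $R(ij)$ adapted to $T$.
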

\par
In the following proposition, we construct the local sections on $\mathscr{C}'_{T}$ for $T$ is a caterpillar type tree, for the arbitrary type tree, the construction are related by replacing $I(ij)$ by $I=I(ij,T,J)$ which is a size $2(n-2)$ set and defined in \cite[Proposition 4.10]{MR3263167}.
\begin{proposition}\label{localsect}

Let $T$ be the caterpillar tree on $n$ leaves with endpoints $i$ and $j$, then the lifting section $\sigma'|_{\mathscr{C'}_{T}} \defeq \sigma'_{(ij),T,I}$ restricted on $\mathscr{C}'_{T}$ is the local lifting of $\sigma^{(ij)}_{T,I}|_{\mathscr{C}_{T}}$ on $\mathscr{C}_{T}$ by the following way:
\begin{equation}
    \begin{tikzcd}
     \mathscr{T}{U_{ij}}\cap \overline{{\mathscr{C}_{T}}} \arrow[r,hookleftarrow ] \arrow[d, "\sigma^{(ij)}_{T,I}"] 
& (\bigcap_{ij}{\mathscr{T}U_{ij}}) \cap {\overline{\mathscr{C}_T}} \arrow[r, hookleftarrow] \arrow[d, ""] &  {\mathscr{C}_T} \arrow[d] \arrow[r,twoheadrightarrow] &  {\mathscr{C}'_{T}} \arrow[d, "\sigma'_{(ij),T,I}"] \\
{U_{ij}}^{\textsf{an}} \arrow[r,equal]
& {U_{ij}}^{\textsf{an}} \arrow[r, hookleftarrow] & {\textsf{Gr}_{0}(2,n)}^{\textsf{an}} \arrow[r,] & \textsf{M}^{\textsf{an}}_{0,n}
\end{tikzcd}
\end{equation}
\begin{proof}

Notice that $\Gamma(\textsf{Gr}_{0}(2,n)) \cong R(ij)_{S_{A_{0}}}$, where $S_{A_{0}}$ is the multiplicative closed subset of $R(ij)$ generated by $\{u_{kl}\,|\,kl \in A_{0}={[n] \choose 2} \smallsetminus \{i,j\}\}$ and for any $x \in \mathscr{C}_{T} \subseteq \mathscr{T}\textsf{Gr}_{0}(2,n)$,  $\sigma^{(ij)}_{T,I}(x)(u_{kl})=\textsf{exp}(x_{kl}-x_{ij}) \ne 0$ for $kl \in I(ij)$ and $\sigma^{(ij)}_{T,I}(x)(u_{kl})=\textsf{max}\{\textsf{exp}(x_{ik}+x_{jl}-2x_{ij}),\textsf{exp}(x_{il}+x_{jk}-2x_{ij})\} \ne 0$ for $kl \ne \{i,j\}$, so the multiplicative seminorm can be extended to $R(ij)_{S_{A_{0}}}$ uniquely and we can get a seminorm on $\Gamma(\textsf{Gr}_{0}(2,n))^{\mathbb{G}^{n}_{m,K}}$ which extend the valuation on $K$, and this seminorm can be defined as the image of lifting section $\sigma'_{(ij,T,I)}: \mathscr{C}'_{T} \hookrightarrow \mathbb{R}^{n \choose 2}/{\textsf{im}L} \to \mathbb{R}^{I} \to (\mathbb{A}^{I}_{K})^{\textsf{an}}$.
\end{proof}
\end{proposition}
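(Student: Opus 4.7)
The plan is to descend the section $\sigma^{(ij)}_{T,I}$ over $\overline{\mathscr{C}_T} \cap \mathscr{T}U_{ij}$, already constructed in the preceding lemma, to a section $\sigma'_{(ij),T,I}$ on $\mathscr{C}'_T = \mathscr{C}_T/\overline{L}$ via the Gelfand--MacPherson correspondence $\textsf{Gr}_0(2,n)/\mathbb{G}_{m,K}^n \cong \textsf{M}_{0,n}$. The key idea is that the multiplicative seminorm $\sigma^{(ij)}_{T,I}(x)$ on the affine coordinate ring $R(ij)$ extends to the global ring $\Gamma(\textsf{Gr}_0(2,n))$, restricts to the $\mathbb{G}_{m,K}^n$-invariants, and depends only on the class of $x$ modulo $\overline{L}$.

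First, I would verify that for any $x \in \mathscr{C}_T \subseteq \mathscr{T}\textsf{Gr}_0(2,n)$, the seminorm $\sigma^{(ij)}_{T,I}(x)$ on $R(ij) = K[u_{kl} \mid kl \in I(ij)]$ inverts every element of the multiplicative set $S_{A_0}$ generated by $\{u_{kl} \mid kl \in A_0\}$. For $kl \in I(ij)$, the value $\sigma^{(ij)}_{T,I}(x)(u_{kl}) = \exp(x_{kl} - x_{ij})$ is strictly positive since all $x_{kl}$ are finite on $\mathscr{T}\textsf{Gr}_0(2,n)$; for $k,l \notin \{i,j\}$, the Pl\"ucker relation $u_{kl} = u_{ik}u_{jl} - u_{il}u_{jk}$ combined with the Gauss-norm description of $\sigma^{(ij)}_{T,I}(x)$ produces $\max\{\exp(x_{ik} + x_{jl} - 2x_{ij}), \exp(x_{il} + x_{jk} - 2x_{ij})\} > 0$. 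Hence the seminorm extends uniquely to the localization $R(ij)_{S_{A_0}} \cong \Gamma(\textsf{Gr}_0(2,n))$.

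Next, I would restrict this extended seminorm to the ring of invariants $\Gamma(\textsf{Gr}_0(2,n))^{\mathbb{G}_{m,K}^n} \cong \Gamma(\textsf{M}_{0,n})$, producing a point of $\textsf{M}^{\textsf{an}}_{0,n}$. To obtain a map on the quotient $\mathscr{C}'_T$, I would check that shifting $x$ by an element $(a_1,\dots,a_n) \in \textsf{im}(L) = \overline{L}$, which changes each $x_{kl}$ by $a_k + a_l$, rescales $\sigma^{(ij)}_{T,I}(x)(u_{kl})$ by the corresponding character value of the $\mathbb{G}_{m,K}^n$-action. Since any $\mathbb{G}_{m,K}^n$-invariant monomial in the $u_{kl}$ has total character weight zero, the restricted seminorm on $\Gamma(\textsf{M}_{0,n})$ is unaffected, yielding the well-defined lift $\sigma'_{(ij),T,I}: \mathscr{C}'_T \to \textsf{M}^{\textsf{an}}_{0,n}$.

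Finally, I would verify commutativity of the diagram square by square: the leftmost square is trivial (an equality paired with a restriction to a larger subset), the middle square commutes by the uniqueness of the extension from $R(ij)$ to $\Gamma(\textsf{Gr}_0(2,n))$, and the rightmost square follows from compatibility of restriction to invariants with the Gelfand--MacPherson quotient map. Confirming that $\sigma'_{(ij),T,I}$ is genuinely a section of $\textsf{trop}$ reduces to evaluating the seminorm on a generating set of invariant monomials and matching the output to the image of $x$ in $\mathscr{C}'_T \subset \mathscr{T}\textsf{M}_{0,n}$. The main obstacle I anticipate is the character-weight bookkeeping in the descent through $\overline{L}$: one must precisely match how each monomial in the Pl\"ucker coordinates transforms under both the torus action and the $\overline{L}$-shift, and verify that the two cancel exactly on the invariant subring, while also checking that the resulting assignment $x \mapsto \sigma'_{(ij),T,I}(x)$ is continuous so that it defines a continuous section of the tropicalization map.
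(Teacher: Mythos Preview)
Your proposal is correct and follows essentially the same approach as the paper: extend the seminorm $\sigma^{(ij)}_{T,I}(x)$ from $R(ij)$ to the localization $R(ij)_{S_{A_0}} \cong \Gamma(\textsf{Gr}_0(2,n))$ by checking nonvanishing on all $u_{kl}$, then restrict to the $\mathbb{G}_{m,K}^n$-invariants to obtain a point of $\textsf{M}^{\textsf{an}}_{0,n}$. You in fact supply more detail than the paper does---the explicit character-weight check for well-definedness modulo $\overline{L}$, the square-by-square verification of the diagram, and the continuity remark are all left implicit in the paper's brief argument.
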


In fact, the section of tropicalization of $\textsf{M}_{0,n}$ could be extend to $\overline{\textsf{M}}_{0,n}$, to see this, let's recall that $\overline{\textsf{M}}_{0,n} \cong \textsf{Gr}(2,n)\sslash^{\textsf{ch}}\mathbb{G}^{n}_{m,K}$ which is defined as the closure of geometric quotient $\textsf{Gr}_{0}(2,n)/\mathbb{G}^{n}_{m,K}$ in the Chow variety $\mathcal{C}_{n-1}(\textsf{Gr}(2,n), \delta)$ (see \cite{MR1237834} for more details), meanwhile, we have $\overline{\textsf{M}}_{0,n} \hookrightarrow \mathbb{P}^{{{n}\choose{2}}-1} \sslash^{\textsf{ch}} \mathbb{G}^{n}_{m,K}$ which is a projective toric variety and isomorphic to $X_{{\textsf{M}}_{0,n}}$, then by \cite[Theorem 3.1. and Remark 3.11.]{MR3507917}, we have the geometric tropicalization with respect with this embedding $\mathscr{T}\textsf{M}_{0,n} = \textsf{M}_{0,n}^{\textsf{trop}} \subseteq \overline{\textsf{M}}^{\textsf{trop}}_{0,n}=\mathscr{T}\overline{\textsf{M}}_{0,n}$,  $\mathscr{T}\overline{\textsf{M}}_{0,n}$ is also the associated extended cone complex of $\mathscr{T}\textsf{M}_{0,n}$, but we can easily extend the section on $\mathscr{C}_{T}'$ on its limit points, thus we have the following proposition:
\begin{proposition}\label{section of chow}
          The faithful tropicalization of $\textsf{Gr}(2,n)$ is compatible with Chow quotient $\textsf{Gr}(2,n)\sslash ^{\textsf{ch}}\mathbb{G}^{\textsf{}}_{m,K}$. 
\end{proposition}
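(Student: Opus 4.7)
The plan is to construct an extension of the local section $\sigma'_{(ij),T,I}$ of Proposition~\ref{localsect} from $\mathscr{C}'_{T}\subset\mathscr{T}\textsf{M}_{0,n}$ to the closure $\overline{\mathscr{C}'_{T}}$ inside the extended cone complex $\mathscr{T}\overline{\textsf{M}}_{0,n}$, and then to verify that these extensions glue. The geometric setup is the embedding $\overline{\textsf{M}}_{0,n}\hookrightarrow X_{\textsf{M}_{0,n}}\cong\mathbb{P}^{{n\choose 2}-1}\sslash^{\textsf{ch}}\mathbb{G}^{n}_{m,K}$ into the projective toric variety whose fan is supported on $\mathscr{T}\textsf{M}_{0,n}$; by [MR3507917, Thm.~3.1, Rmk.~3.11], the tropicalization $\mathscr{T}\overline{\textsf{M}}_{0,n}$ is precisely the canonical (extended) compactification of $\mathscr{T}\textsf{M}_{0,n}$, and its stratification matches the toric boundary stratification of $X_{\textsf{M}_{0,n}}$.

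First I would treat a fixed caterpillar cone $\mathscr{C}'_T$. A boundary point of $\overline{\mathscr{C}'_T}$ is obtained by letting a subset of edge lengths of $T$ tend to $+\infty$, which combinatorially corresponds to contracting a subtree and produces a more degenerate stable tropical curve lying on a lower-dimensional face of the extended cone complex. In the Plücker chart $R(ij)$, this sends the values $\sigma'(x)(u_{kl})=\exp(x_{kl}-x_{ij})$ to $0$ exactly for those coordinates $u_{kl}$ that vanish on the associated boundary stratum $\delta_{A}\subset\overline{\textsf{M}}_{0,n}$. The skeleton map $\delta_{n}:\overline{\mathbb{R}}^{n}\to\mathbb{A}^{n,\textsf{an}}_{K}$ is already defined on the extended reals and its max-formula is continuous there, so the expression $\sigma'_{(ij),T,I}=\delta_{I}\circ\pi_{I}$ extends term-by-term to $\overline{\mathscr{C}'_T}$, yielding a continuous multiplicative seminorm on $R(ij)_{S_{A_{0}}}$ that factors through the localization corresponding to the boundary stratum.

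Next I would identify the extended seminorm as a point of $\overline{\textsf{M}}^{\textsf{an}}_{0,n}$ rather than merely of $X_{\textsf{M}_{0,n}}^{\textsf{an}}$. This uses that analytification preserves closed immersions: since $\overline{\textsf{M}}_{0,n}$ is closed in $X_{\textsf{M}_{0,n}}$, the image $\overline{\textsf{M}}^{\textsf{an}}_{0,n}$ is closed in $X_{\textsf{M}_{0,n}}^{\textsf{an}}$, and the extended section is the continuous limit of points $\sigma'(x)\in\textsf{M}^{\textsf{an}}_{0,n}\subset\overline{\textsf{M}}^{\textsf{an}}_{0,n}$. The limit therefore lies in $\overline{\textsf{M}}^{\textsf{an}}_{0,n}$, and combinatorially the Plücker-monomial vanishing pattern pins down the correct boundary stratum. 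The gluing across different charts $U_{ij}$ and across neighbouring cones is then inherited from the interior: on overlaps the sections differ by the $\mathbb{G}^{n}_{m,K}$-rescaling built into the Gelfand--MacPherson correspondence, and since they already agree on the dense open cone $\mathscr{C}'_T\cap\mathscr{C}'_{T'}$ the agreement persists by continuity.

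The main obstacle I anticipate is the combinatorial bookkeeping in the non-caterpillar case, where the local section of \cite{MR3263167} is built over the stratification $\{\mathscr{C}^{(ij)}_{T,J}\}$ of $\mathscr{T}U_{ij}$ with the shrinking index sets $I=I(ij,T,J)$ of size $2(n-2)$. One must track how these index sets contract as further coordinates tend to $-\infty$, and verify that the corresponding closed cones in $\mathscr{T}\overline{\textsf{M}}_{0,n}$ cover the extended cone complex precisely in the way dictated by the Kapranov description. Once the match between the boundary face lattice of $\overline{\mathscr{C}'_T}$ and the boundary stratification of $\overline{\textsf{M}}_{0,n}$ is in place, the extended section $\sigma:\mathscr{T}\overline{\textsf{M}}_{0,n}\to\overline{\textsf{M}}^{\textsf{an}}_{0,n}$ splits $\textsf{trop}$ continuously, which is the desired compatibility of the faithful tropicalization with the Chow quotient.
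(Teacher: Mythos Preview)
Your proposal is correct and follows the same route as the paper. The paper's own argument for this proposition is only the brief paragraph preceding it: it records the embedding $\overline{\textsf{M}}_{0,n}\hookrightarrow X_{\textsf{M}_{0,n}}$, invokes \cite[Theorem 3.1, Remark 3.11]{MR3507917} to identify $\mathscr{T}\overline{\textsf{M}}_{0,n}$ with the extended cone complex of $\mathscr{T}\textsf{M}_{0,n}$, and then says one ``can easily extend the section on $\mathscr{C}_T'$ on its limit points.'' Your write-up supplies precisely the details left implicit in that sentence---the continuity of $\delta_I$ on $\overline{\mathbb{R}}^{I}$, the closedness of $\overline{\textsf{M}}_{0,n}^{\textsf{an}}$ in $X_{\textsf{M}_{0,n}}^{\textsf{an}}$ to guarantee the limit lands in the right space, and the gluing by continuity on dense overlaps---so there is no genuine difference in strategy. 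One small wording point: at a boundary limit the extended seminorm on $R(ij)$ acquires a kernel (the $u_{kl}$ whose values tend to $0$) and hence factors through the \emph{quotient} cutting out the boundary stratum rather than through a further localization; this is what your vanishing-pattern remark is describing, but the word ``localization'' there is slightly misleading.
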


\end{situation}
\subsection{Examples}
In this subsection, we give the explicit description of $\sigma(\mathscr{T}\textsf{M}_{0,n})$ for the cases $n=4,5$ in terms of valuation in Berkovich spaces.
\begin{situation}
\textbf{Explict description of $\sigma(\mathscr{T}\textsf{M}_{0,4})$}
\par
By the example \ref{45tropical}, we know that $\textsf{M}_{0,4}^{\textsf{trop}}$ 
consists of three half rays emanating from the origin (the picture of a tropical line), more preciously, we have $\textsf{M}^{\textsf{trop}}_{0,4}={\mathscr{C}'_{T_{0}}}\cup{\mathscr{C}'_{T_{1}}}\cup{\mathscr{C}'_{T_{2}}}\cup{\mathscr{C}'_{T_{3}}}$, where $T_{0}$ is the star tree $(1234)$, $T_{1}=(12\,|\,34)$, $T_{2}=(13\,|\,24)$ and $T_{3}=(14\,|\,23)$.
\par
Then the image of $\mathscr{C}'_{T_{i}}$ under the map $\pi \circ P$ in the definition \ref{trmim} should be as following:
\begin{enumerate}
    \item $\pi \circ P(\mathscr{C}'_{T_{1}})=\{\overline{(0, -\frac{1}{2}l_{1}, -\frac{1}{2}l_{1}, -\frac{1}{2}l_{1}, -\frac{1}{2}l_{1},0)}\in \mathbb{R}^{4 \choose 2}/\textsf{im}L\,|\,l_{1}\in \mathbb{R}_{ \geqslant 0}\}$
    \item $\pi \circ P(\mathscr{C}'_{T_{2}})=\{\overline{(-\frac{1}{2}l_{2},0,-\frac{1}{2}l_{2},-\frac{1}{2}l_{2},0,-\frac{1}{2}l_{2})}\in \mathbb{R}^{4 \choose 2}/\textsf{im}L\,|\,l_{2}\in \mathbb{R}_{ \geqslant 0}\}$
    \item $\pi \circ P(\mathscr{C}'_{T_{3}})=\{\overline{(-\frac{1}{2}l_{3},-\frac{1}{2}l_{3},0,0,-\frac{1}{2}l_{3},-\frac{1}{2}l_{3})}\in \mathbb{R}^{4 \choose 2}/\textsf{im}L\,|\,l_{3}\in \mathbb{R}_{ \geqslant 0}\}$
\end{enumerate}
Now we consider $T_{1}$ as a caterpillar tree with $4$ leaves with end points $1$ and $4$, then we can use the section $\sigma'_{(14,T_{1},I(14))}$ in proposition \ref{localsect}, where $I(14)=\{12,13,24,34\}$, so we have:
\begin{enumerate}
    \item $\pi'_{I(14)}(\mathscr{C}'_{T_{1}})=\{(\frac{1}{2}l_{1},0,0,\frac{1}{2}l_{1}) \in \mathbb{R}^{4}\}$
    \item $\pi'_{I(14)}(\mathscr{C}'_{T_{2}})=\{(0,\frac{1}{2}l_{2},\frac{1}{2}l_{2},0) \in \mathbb{R}^{4}\}$
    \item $\pi'_{I(13)}(\mathscr{C}'_{T_{3}})=\{(0,\frac{1}{2}l_{3},\frac{1}{2}l_{3},0) \in \mathbb{R}^{4}\}$
\end{enumerate}
Thus, for any $x \in \mathscr{C}'_{T_{1}}$, the seminorm  $\sigma'_{(14,T_{1},I(14))}(x)$ on the ring $K[u_{12},u_{13},u_{24},u_{34}]$ is defined as 
\begin{equation}
{\sum_{\alpha}c_{\alpha}u^{\alpha_{12}}_{12}}u^{\alpha_{13}}_{13}u^{\alpha_{24}}_{24}u^{\alpha_{34}}_{34} \mapsto \textsf{max}_{\alpha}\{|c_{\alpha}|\,\textsf{exp}(\alpha_{12}(\frac{1}{2}l_1)+\alpha_{34}(\frac{1}{2}l_{1})\}
\end{equation}
Thus we have:
\begin{itemize}
    \item $\sigma'_{(14,T_{1},I(14))}(x)(u_{12})=\textsf{exp}{(\frac{1}{2}l_{1})}$
    \item $\sigma'_{(14,T_{1},I(14))}(x)(u_{13})=1$
    \item $\sigma'_{(14,T_{1},I(14))}(x)(u_{24})=1$
    \item $\sigma'_{(14,T_{1},I(14))}(x)(u_{34})=\textsf{exp}{(\frac{1}{2}l_{1})}$
\end{itemize}
Meanwhile we have $\Gamma(\textsf{Gr}_{0}(2,4))^{\mathbb{G}^{4}_{m,K}} \cong K[(\frac{{{u_{23}}}}{{{u_{12}}{u_{34}}}})^{ \pm 1 },(\frac{{{u_{23}}}}{{{u_{13}}{u_{24}}}})^{\pm 1}]$, but $u_{23}=u_{12}u_{34}-u_{13}u_{24}$, thus $\Gamma(\textsf{Gr}_{0}(2,4))^{\mathbb{G}^{4}_{m,K}} \cong K[u,u^{-1},(u-1)^{-1}] \cong K[x_{1}^{\pm 1},x_{2}^{\pm 1}]/(x_{1}-x_{2}+1)$. 
\par
By letting $u=\frac{u_{13}u_{24}}{u_{12}u_{34}}$, we have $\sigma'_{(14,T_{1},I(14))}(x)(u)=\textsf{exp}(-l_{1})$. 
Thus for $f=\sum^{m}_{n=1}a_{n}u^{n}$,
\begin{equation}\label{04ann1}
   \sigma'_{(14,T_{1},I(14))}(x)(f)=\textsf{max}_{n}(|a_{n}|_{K}\textsf{exp}(-nl_{1}))
\end{equation}
By comparing \ref{04mv1} and \ref{04ann1}, we can see they are the same valuation on the function field of $\overline{\textsf{M}}_{0,4}$.
\end{situation}
\begin{situation}
\textbf{Explict description of $\sigma(\mathscr{T}\textsf{M}_{0,5})$}
\par
By the Example \ref{45tropical} and \ref{m05sk}, we know that ${\textsf{M}}_{0,5}^{\textsf{trop}}$ is a union of 15 quadrants $\mathbb{R}^{2}_{ \geqslant 0}$. These quadrants are corresponding to the combinatorial types of the tree of type $(**|*|**)$ and attached along the rays which are corresponding to the combinatorial types of the tree of type $(***\,|**)$ as we describe in Figure \ref{e12}. So we have ${\textsf{M}}^{\textsf{trop}}_{0,5}=\bigcup{\mathscr{C}'_{T_{(ij\,|\,m\,|\,kl)}}}$.
\end{situation}

\begin{figure}
    \centering
    
    \begin{tikzpicture}[scale=0.50]

\draw [](0,0){} -- (-1,1);
\draw [](0,0)-- (-1.2,0);
\draw [](0,0)-- (-1,-1);
\draw [](0,0)-- (2,0);
\draw [](2,0)-- (3,1);
\draw [](2,0)-- (3,-1);
\filldraw[black] (0,0) circle (2pt) node[anchor=west] {};
\filldraw[black] (2,0) circle (2pt) node[anchor=west] {};
\node[label=$1$] at (-1.3,0.3)  {}; 
\node[label=$2$] at (-1.5,-0.7) {}; 
\node[label=$5$] at (-1.3,-1.7) {}; 
\node[label=$3$] at (3.3,0.3) {}; 
\node[label=$4$] at (3.3,-1.7) {}; 
\node[label=$E_{12}(\delta_{34})$] at (6,-0.8)  {}; 

\draw [-latex](1,-0.3)-- (1,-1.3);

\draw [](0,-3){} -- (-1,-2);
\draw [](0,-3){} -- (-1,-4);
\draw [](0,-3){} -- (1,-3);
\draw [](1,-3){} -- (1,-2);
\draw [](1,-3){} -- (2,-3);
\draw [](2,-3){} -- (3,-2);
\draw [](2,-3){} -- (3,-4);
\filldraw[black] (0,-3) circle (2pt) node[anchor=west] {};
\filldraw[black] (1,-3) circle (2pt) node[anchor=west] {};
\filldraw[black] (2,-3) circle (2pt) node[anchor=west] {};
\node[label=$1$] at (-1.3,-2.7)  {}; 
\node[label=$5$] at (-1.3,-4.7)  {}; 
\node[label=$2$] at (1,-2.4)  {}; 
\node[label=$3$] at (3.3,-2.7)  {}; 
\node[label=$4$] at (3.3,-4.7)  {}; 
\draw [-latex](1,-5.5)-- (1,-4.5);
\draw [](0,-6){} -- (-1,-5);
\draw [](0,-6){} -- (-1,-7);
\draw [](0,-6){} -- (1,-6);

\draw [](1,-6){} -- (2,-6);
\draw [](2,-6){} -- (3,-5);
\draw [](2,-6){} -- (3,-7);
\draw [](2,-6){} -- (3.2,-6);
\filldraw[black] (0,-6) circle (2pt) node[anchor=west] {};
\filldraw[black] (2,-6) circle (2pt) node[anchor=west] {};
\node[label=$1$] at (-1.3,-5.7)  {}; 
\node[label=$5$] at (-1.3,-7.7)  {}; 
\node[label=$2$] at (3.3,-5.7)  {}; 
\node[label=$3$] at (3.5,-6.7)  {}; 
\node[label=$4$] at (3.3,-7.7)  {}; 
\node[label=$E_{1}(\delta_{15})$] at (6,-6.8)  {}; 
\draw [-latex](-1.5,-7.5)-- (-2.5,-8);
\draw [](-7,-9){} -- (-8,-8);
\draw [](-7,-9){} -- (-8,-10);
\draw [](-7,-9){} -- (-6,-9);
\draw [](-6,-9){} -- (-6,-8);
\draw [](-6,-9){} -- (-5,-9);
\draw [](-5,-9){} -- (-4,-8);
\draw [](-5,-9){} -- (-4,-10);
\filldraw[black] (-7,-9) circle (2pt) node[anchor=west] {};
\filldraw[black] (-6,-9) circle (2pt) node[anchor=west] {};
\filldraw[black] (-5,-9) circle (2pt) node[anchor=west] {};
\node[label=$1$] at (-8.3,-8.7)  {}; 
\node[label=$5$] at (-8.3,-10.7)  {}; 
\node[label=$3$] at (-6,-8.4)  {}; 
\node[label=$2$] at (-3.7,-8.7)  {}; 
\node[label=$4$] at (-3.7,-10.7)  {}; 
\draw [-latex](-6,-10.5)-- (-6,-9.5);
\draw [](-7,-12){} -- (-8,-11);
\draw [](-7,-12){} -- (-8.2,-12);
\draw [](-7,-12){} -- (-8,-13);
\draw [](-7,-12){} -- (-5,-12);
\draw [](-5,-12){} -- (-4,-11);
\draw [](-5,-12){} -- (-4,-13);
\filldraw[black] (-7,-12) circle (2pt) node[anchor=west] {};
\filldraw[black] (-5,-12) circle (2pt) node[anchor=west] {};
\node[label=$3$] at (-8.3,-11.7)  {}; 
\node[label=$1$] at (-8.5,-12.7)  {}; 
\node[label=$5$] at (-8.3,-13.7)  {}; 
\node[label=$2$] at (-3.7,-11.7)  {}; 
\node[label=$4$] at (-3.7,-13.7)  {}; 
\node[label=$E_{13}(\delta_{24})$] at (-1,-12.8)  {}; 
\draw [-latex](3.7,-7.5)-- (4.7,-8);
\draw [](7.2,-9){} -- (6.2,-8);
\draw [](7.2,-9){} -- (6.2,-10);
\draw [](7.2,-9){} -- (8.2,-9);
\draw [](8.2,-9){} -- (8.2,-8);
\draw [](8.2,-9){} -- (9.2,-9);
\draw [](9.2,-9){} -- (10.2,-8);
\draw [](9.2,-9){} -- (10.2,-10);
\filldraw[black] (7.2,-9) circle (2pt) node[anchor=west] {};
\filldraw[black] (8.2,-9) circle (2pt) node[anchor=west] {};
\filldraw[black] (9.2,-9) circle (2pt) node[anchor=west] {};
\node[label=$1$] at (5.9,-8.7)  {}; 
\node[label=$5$] at (5.9,-10.7)  {}; 
\node[label=$4$] at (8.2,-8.4)  {};
\node[label=$2$] at (10.5,-8.7)  {}; 
\node[label=$3$] at (10.5,-10.7)  {}; 
\draw [-latex](8.2,-10.5)-- (8.2,-9.5);
\draw [](7.2,-12){} -- (6.2,-11);
\draw [](7.2,-12){} -- (6.0,-12);
\draw [](7.2,-12){} -- (6.2,-13);
\draw [](7.2,-12){} -- (9.2,-12);
\draw [](9.2,-12){} -- (10.2,-11);
\draw [](9.2,-12){} -- (10.2,-13);
\filldraw[black] (7.2,-12) circle (2pt) node[anchor=west] {};
\filldraw[black] (9.2,-12) circle (2pt) node[anchor=west] {};
\node[label=$4$] at (5.9,-11.7)  {}; 
\node[label=$1$] at (5.7,-12.7)  {}; 
\node[label=$5$] at (5.9,-13.7)  {}; 
\node[label=$2$] at (10.5,-11.7)  {}; 
\node[label=$3$] at (10.5,-13.7)  {}; 
\node[label=$E_{14}(\delta_{23})$] at (13.2,-12.8)  {}; 
\end{tikzpicture}
    \caption{Adjunction of combinatorial type of tree corresponding to the quadrants connecting the rays $\mathscr{C}'_{T_{\delta_{15}}}$, $\mathscr{C}'_{T_{\delta_{34}}}$, $\mathscr{C}'_{T_{\delta_{24}}}$, $\mathscr{C}'_{T_{\delta_{23}}}$}.
    
    \label{e12}
\end{figure}
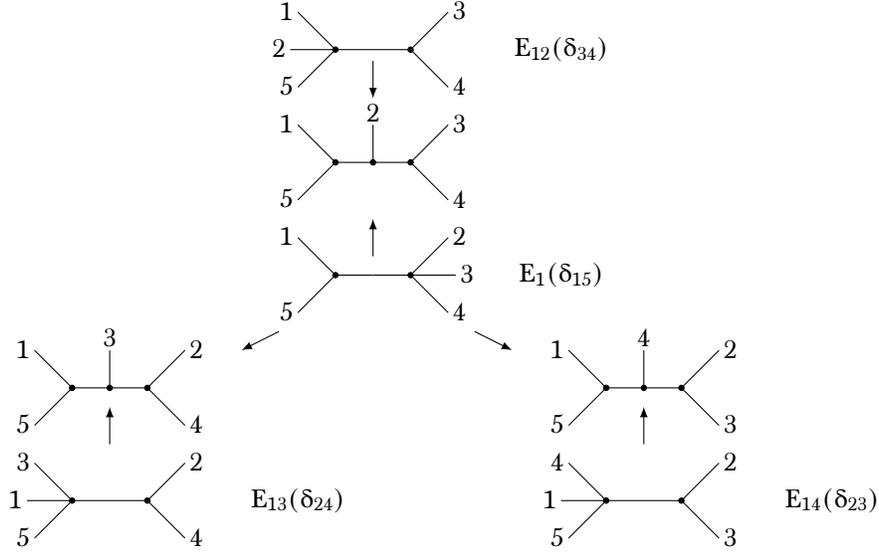
Without loss of generality, we only do the computation for the quadrants for the trees with combinatorial types $(15\,|\,2\,|\,34)$, $(15\,|\,3\,|\,24)$ and $(15\,|\,4\,|\,23)$ as we describe in the Figure \ref{e12}. 
\par
Let's use $l_{ij}$ to denote the distance function defined by the tropical curve $\delta_{ij}$, then the distance function defined by combinatorial tree associated to a quadrant connect two rays $\delta_{ij}$ and $\delta_{kl}$ should be $l_{ij}+l_{kl}$. Thus the images of $\mathscr{C}'_{T_{(15\,|\,2\,|\,34)}}$, $\mathscr{C}'_{T_{(15\,|\,3\,|\,24)}}$ and $\mathscr{C}'_{T_{(15\,|\,4\,|\,23)}}$ under the map $\pi \circ P$ in the Definition \ref{trmim} should be as following:
\begin{enumerate}
    \item 
    \begin{equation*}
     \begin{tikzpicture}[scale=0.50]
        \draw [](0,-3){} -- (-1,-2);
\draw [](0,-3){} -- (-1,-4);
\draw [](0,-3){} -- (1,-3);
\draw [](1,-3){} -- (1,-2);
\draw [](1,-3){} -- (2,-3);
\draw [](2,-3){} -- (3,-2);
\draw [](2,-3){} -- (3,-4);
\filldraw[black] (0,-3) circle (2pt) node[anchor=west] {};
\filldraw[black] (1,-3) circle (2pt) node[anchor=west] {};
\filldraw[black] (2,-3) circle (2pt) node[anchor=west] {};
\node[label=$1$] at (-1.3,-2.7)  {}; 
\node[label=$5$] at (-1.3,-4.7)  {}; 
\node[label=$2$] at (1,-2.4)  {}; 
\node[label=$3$] at (3.3,-2.7)  {}; 
\node[label=$4$] at (3.3,-4.7)  {}; 
\node[label=$l_{15}$] at (0.5,-3.4)  {}; 
\node[label=$l_{34}$] at (1.5,-3.4)  {}; 
\end{tikzpicture}
    \end{equation*}

    \begin{align*}
          \pi \circ P(\mathscr{C}'_{T_{(15\,|\,2\,|\,34)}})&=\big
          \{\overline{(-\frac{1}{2}l_{15},-\frac{1}{2}l_{15}-\frac{1}{2}l_{34},-\frac{1}{2}l_{15}-\frac{1}{2}l_{34}}\\
          & ,\overline{0,-\frac{1}{2}l_{34},-\frac{1}{2}l_{34},-\frac{1}{2}l_{15},0,-\frac{1}{2}l_{15}-\frac{1}{2}l_{34},}\\
          &\overline{-\frac{1}{2}l_{15}-\frac{1}{2}l_{34}})\in \mathbb{R}^{5 \choose 2}/\textsf{im}L\,|\,l_{15}, l_{34}\in \mathbb{R}_{ \geqslant 0}\big\} 
         \end{align*}

    \item
    \begin{equation*}
     \begin{tikzpicture}[scale=0.50]
        \draw [](0,-3){} -- (-1,-2);
\draw [](0,-3){} -- (-1,-4);
\draw [](0,-3){} -- (1,-3);
\draw [](1,-3){} -- (1,-2);
\draw [](1,-3){} -- (2,-3);
\draw [](2,-3){} -- (3,-2);
\draw [](2,-3){} -- (3,-4);
\filldraw[black] (0,-3) circle (2pt) node[anchor=west] {};
\filldraw[black] (1,-3) circle (2pt) node[anchor=west] {};
\filldraw[black] (2,-3) circle (2pt) node[anchor=west] {};
\node[label=$1$] at (-1.3,-2.7)  {}; 
\node[label=$5$] at (-1.3,-4.7)  {}; 
\node[label=$3$] at (1,-2.4)  {}; 
\node[label=$2$] at (3.3,-2.7)  {}; 
\node[label=$4$] at (3.3,-4.7)  {}; 
\node[label=$l_{15}$] at (0.5,-3.4)  {}; 
\node[label=$l_{24}$] at (1.5,-3.4)  {}; 
\end{tikzpicture}
    \end{equation*}
\begin{align*}
   \pi \circ P(\mathscr{C}'_{T_{(15\,|\,3\,|\,24)}})&=\{\overline{(-\frac{1}{2}l_{15}-\frac{1}{2}l_{24},-\frac{1}{2}l_{15},-\frac{1}{2}l_{15}-\frac{1}{2}l_{24}}\\
   &\overline{,0,-\frac{1}{2}l_{24},0,-\frac{1}{2}l_{15}-\frac{1}{2}l_{24},-\frac{1}{2}l_{24},-\frac{1}{2}l_{15},}\\
   &\overline{-\frac{1}{2}l_{15}-\frac{1}{2}l_{24})}\in \mathbb{R}^{5 \choose 2}/\textsf{im}L\,|\,l_{15}, l_{24}\in \mathbb{R}_{ \geqslant 0}\}  
\end{align*}

    \item
    \begin{equation*}
     \begin{tikzpicture}[scale=0.50]
        \draw [](0,-3){} -- (-1,-2);
\draw [](0,-3){} -- (-1,-4);
\draw [](0,-3){} -- (1,-3);
\draw [](1,-3){} -- (1,-2);
\draw [](1,-3){} -- (2,-3);
\draw [](2,-3){} -- (3,-2);
\draw [](2,-3){} -- (3,-4);
\filldraw[black] (0,-3) circle (2pt) node[anchor=west] {};
\filldraw[black] (1,-3) circle (2pt) node[anchor=west] {};
\filldraw[black] (2,-3) circle (2pt) node[anchor=west] {};
\node[label=$1$] at (-1.3,-2.7)  {}; 
\node[label=$5$] at (-1.3,-4.7)  {}; 
\node[label=$4$] at (1,-2.4)  {}; 
\node[label=$2$] at (3.3,-2.7)  {}; 
\node[label=$3$] at (3.3,-4.7)  {}; 
\node[label=$l_{15}$] at (0.5,-3.4)  {}; 
\node[label=$l_{23}$] at (1.5,-3.4)  {}; 
\end{tikzpicture}
    \end{equation*}

\begin{align*}
   \pi \circ P(\mathscr{C}'_{T_{(15\,|\,4\,|\,23)}})&=\{\overline{(-\frac{1}{2}l_{15}-\frac{1}{2}l_{23},-\frac{1}{2}{l_{15}}-\frac{1}{2}l_{23},-\frac{1}{2}l_{15}}\\
   &\overline{,0,0,-\frac{1}{2}l_{23},-\frac{1}{2}l_{15}-\frac{1}{2}l_{23},-\frac{1}{2}l_{23},}\\
   &\overline{-\frac{1}{2}l_{15}-\frac{1}{2}l_{23},-\frac{1}{2}l_{15})}\in \mathbb{R}^{5 \choose 2}/\textsf{im}L\,|\,l_{15}, l_{23}\in \mathbb{R}_{ \geqslant 0}\}
\end{align*}

\end{enumerate}
\par
Now we consider ${T_{(15\,|\,2\,|\,34)}}$ and ${T_{(15\,|\,3\,|\,24)}}$ as the caterpillar trees with $5$ leaves with end points $1$ and $4$, ${T_{(15\,|\,4\,|\,23)}}$ as the caterpillar trees with $5$ leaves with end points $1$ and $3$, then we can use the sections $\sigma'_{(14,{T_{(15\,|\,2\,|\,34)}},I(14))}$, $\sigma'_{(14,{T_{(15\,|\,3\,|\,24)}},I(14))}$, and $\sigma'_{(13,{T_{(15\,|\,4\,|\,23)}},I(13))}$ in proposition \ref{localsect}, where $I(14)=\{12,13,15,24,34,45\}$ and $I(13)=\{12,14,15,23,34,35\}$. so we have:
\begin{enumerate}
    \item $\pi'_{I(14)}(\mathscr{C}'_{T_{(15\,|\,2\,|\,34)}})=\{(\frac{1}{2}l_{34},0,\frac{1}{2}l_{15}+\frac{1}{2}l_{34},\frac{1}{2}l_{15},\frac{1}{2}l_{15}+\frac{1}{2}l_{34},0) \in \mathbb{R}^{6}\}$
    \item $\pi'_{I(14)}(\mathscr{C}'_{T_{(15\,|\,3\,|\,24)}})=\{(0,\frac{1}{2}l_{24},\frac{1}{2}l_{15}+\frac{1}{2}l_{24},\frac{1}{2}l_{15}+\frac{1}{2}l_{24},\frac{1}{2}l_{15},0) \in \mathbb{R}^{6}\}$
    \item $\pi'_{I(13)}(\mathscr{C}'_{T_{(15\,|\,4\,|\,23)}})=\{(0,\frac{1}{2}l_{23},\frac{1}{2}l_{15}+\frac{1}{2}l_{23},\frac{1}{2}l_{15}+\frac{1}{2}l_{23},\frac{1}{2}l_{15},0) \in \mathbb{R}^{6}\}$
\end{enumerate}
Thus, for any $x \in \mathscr{C}'_{T_{(15\,|\,2\,|\,34)}}$, the seminorm  $\sigma'_{(14,T_{(15\,|\,2\,|\,34)},I(14))}(x)$ on the ring $K[u_{12},u_{13},u_{15},u_{24},u_{34},u_{45}]$ is defined as 

\begin{footnotesize}

\begin{equation*}
{\sum_{\alpha}c_{\alpha}u^{\alpha_{12}}_{12}}u^{\alpha_{13}}_{13}u^{\alpha_{15}}_{15}u^{\alpha_{24}}_{24}u^{\alpha_{34}}_{34}u^{\alpha_{45}}_{45} \mapsto \textsf{max}_{\alpha}\{|c_{\alpha}|\,\textsf{exp}(\alpha_{12}(\frac{1}{2}l_{34})+(\frac{1}{2}l_{15}+\frac{1}{2}l_{34}){\alpha_{15}}+(\frac{1}{2}l_{15}){\alpha_{24}}+(\frac{1}{2}l_{15}+\frac{1}{2}l_{34}){\alpha_{34}}\}
\end{equation*}
\end{footnotesize}
Thus we have:
\begin{itemize}
    \item $\sigma'_{(14,{T_{(15\,|\,2\,|\,34)}},I(14))}(x)(u_{12})=\textsf{exp}{(\frac{1}{2}l_{34})}$
    \item $\sigma'_{(14,{T_{(15\,|\,2\,|\,34)}},I(14))}(x)(u_{13})=1$
    \item $\sigma'_{(14,{T_{(15\,|\,2\,|\,34)}},I(14))}(x)(u_{15})=\textsf{exp}(\frac{1}{2}l_{15}+\frac{1}{2}l_{34})$
    \item $\sigma'_{(14,{T_{(15\,|\,2\,|\,34)}},I(14))}(x)(u_{24})=\textsf{exp}{(\frac{1}{2}l_{15})}$
    \item $\sigma'_{(14,{T_{(15\,|\,2\,|\,34)}},I(14))}(x)(u_{34})=\textsf{exp}{(\frac{1}{2}l_{15}+\frac{1}{2}l_{34})}$
    \item $\sigma'_{(14,{T_{(15\,|\,2\,|\,34)}},I(14))}(x)(u_{45})=1$
\end{itemize}
\par
Meanwhile, we have:
\begin{small}

\begin{align}
    \Gamma(\textsf{Gr}_{0}(2,5))^{\mathbb{G}^{5}_{m,K}}& \cong{K[(\frac{{{u_{23}}}}{{{u_{12}}{u_{34}}}})^{\pm 1}, (\frac{{{u_{23}}}}{{{u_{13}}{u_{24}}}})^{\pm 1}, (\frac{{{u_{25}}}}{{{u_{12}}{u_{45}}}})^{\pm 1}, (\frac{{{u_{25}}}}{{{u_{15}}{u_{24}}}})^{\pm 1}, (\frac{{{u_{35}}}}{{{u_{13}}{u_{45}}}})^{\pm 1}, (\frac{{{u_{35}}}}{{{u_{15}}{u_{34}}}})^{\pm 1}]}\\
    &\cong{K[u^{\pm 1}, v^{\pm 1}, (u-1)^{-1}, (v-1)^{-1}, (u-v)^{-1}]}\label{05iso}\\
    &\cong{K[x_{1}^{\pm 1}, x_{2}^{\pm 1}, x_{3}^{\pm 1}, x_{4}^{\pm 1}, x_{5}^{\pm 1}]/(x_{3}-x_{1}+1, x_{4}-x_{2}+1, x_{5}-x_{2}+x_{1})}
\end{align}

\end{small}
\begin{remark}\label{change05}
For \ref{05iso}, by the Pl\"ucker relations \ref{plucker}, we have $u_{kl}=u_{ik}u_{jl}-u_{il}u_{jk}$ for $k,l \notin \{i,j\}$, we have $u_{13}u_{25}=u_{12}u_{35}+u_{15}u_{23}$, thus $\frac{{{u_{13}}{u_{45}}}}{{{u_{15}}{u_{34}}}}-\frac{{{u_{13}}{u_{24}}}}{{{u_{12}}{u_{34}}}}=\frac{{{u_{13}}{u_{45}}}}{{{u_{15}}{u_{34}}}}(1-\frac{{{u_{15}}{u_{24}}}}{{{u_{12}}{u_{45}}}})$. By letting $\frac{{{u_{13}}{u_{45}}}}{{{u_{15}}{u_{34}}}} \defeq v$, $\frac{{{u_{13}}{u_{24}}}}{{{u_{12}}{u_{34}}}} \defeq u$
$\frac{{{u_{15}}{u_{24}}}}{{{u_{12}}{u_{45}}}} \defeq w$, we have $u= v(1-w)$, then
we can get the results above.
\end{remark}
Thus we have:
\begin{enumerate}
    \item $\sigma'_{(14,{T_{(15\,|\,2\,|\,34)}},I(14))}(x)(u)=\textsf{exp}(-l_{34})$
     \item $\sigma'_{(14,{T_{(15\,|\,2\,|\,34)}},I(14))}(x)(v)=\textsf{exp}(-l_{15}-l_{34})$
      \item $\sigma'_{(14,{T_{(15\,|\,2\,|\,34)}},I(14))}(x)(\frac{v}{u})=\textsf{exp}(-l_{15})$
\end{enumerate}
Thus for any polynomial $f=\sum_{\beta}a_{\beta}u^{\beta_{1}}v^{\beta_{2}}$ in $K[u,v]$ ,we have:
\begin{equation}\label{05ann1}
    \sigma'_{(14,{T_{(15\,|\,2\,|\,34)}},I(14))}(x)(f)=\textsf{max}_{\beta}(|a_{\beta}|_{K}\textsf{exp}(-\beta_{1}l_{34}-\beta_{2}(l_{15}+l_{34}))
\end{equation}
\begin{remark}
It's not hard to verify that \ref{05ann1} and \ref{04ann1} are independent of the change of variables in remark \ref{change05}, by permuting the new variables.
\end{remark}

\section{Skeleton of $(\overline{\textsf{M}}_{0,n}, \mathscr{M}_{\overline{\textsf{M}}_{0,n} \setminus \textsf{M}_{0,n}})$ }
\label{sec:skeleton}
In this section, we first review a few basic results of the Berkovich skeleton of a log regular log scheme over a log trait. Then, we give an explicit description of $\textsf{Sk}{(\mathcal{X}_{0.n}^{+})}$ for the cases $n=4,5$ in terms of valuation.

\subsection{Berkovich skeleton for a log regular log scheme.}

\begin{definition}\label{modelpair}
Let $(X,\Delta_{X})$ be a pair such that $X$ is proper over $K$ and the round-up $(X,\left\lceil {\Delta_{X}} \right\rceil )$ is log-regular as a Zariski log scheme, a log-regualr log scheme $\mathcal{X}^{+}\defeq (\mathcal{X}, \mathscr{M}_{\mathcal{X}})$ over a log trait $S^{+} \defeq (S,(\varpi))$ is a \textit{log model} for $(X,\Delta_{X})$ over $S^{+}$ if the following conditions are satisfied:
\begin{enumerate}
    \item $\mathcal{X}$ is a model of $X$ over $S$.
    \item The closure of any component of $\Delta_{X}$ in $\mathcal{X}$ has non-empty intersection with $\mathcal{X}_{s}$, and $D_{\mathcal{X}} \defeq \overline{\left\lceil {\Delta_{X}} \right\rceil } + (\mathcal{X}_{s})_{\textsf{red}}$. 
\end{enumerate}
\end{definition}
In order to understand the Berkovich skeleton of log regular log scheme, we recall the theory of monoidal space from \cite{KT93}.
\begin{definition}
A \textit{monoidal space} is a pair $(X,\mathscr{M}_{X})$, where $X$ is a topological space and $\mathscr{M}_{X}$ is a sheaf of commutative monoids on $X$. A morphism of monoidal spaces is:
\begin{equation*}
    (f,f^{\flat}): (X,\mathscr{M}_{X}) \to (Y,\mathscr{M}_{Y})
\end{equation*}
where $f:X \to Y$ is a continuous map and $f^{\flat}: f^{-1}(\mathscr{M}_{Y}) \to \mathscr{M}_{X}$ is a morphism if sheaves of monoids such that for any $x \in X$, the stalk morphism $f_{x}^{\flat}: \mathscr{M}_{Y,f(x)} \to \mathscr{M}_{X,x}$ is a local homomorphism of monoids.
\end{definition}
\begin{remark}
\begin{enumerate}
    \item $(X,\mathscr{M}_{X})$ is sharp if for any point $x \in X$, the monoid $\mathscr{M}^{*}_{X,x}=\{1\}$. For any monoidal space $(X,\mathscr{M}_{X})$, there is a associated sharp monoidal space $(X,\overline{\mathscr{M}}_{X})$, where $\overline{\mathscr{M}}_{X} \defeq \mathscr{M}_{X}/\mathscr{M}^{*}_{X}$.
\end{enumerate}
\end{remark}
\begin{situation}
Fix a monoid $P$, we denote by $\textsf{Spec}\,P$ the set of prime ideals of $P$. The Zariski topology of $\textsf{Spec}\,P$ is generated by $\textsf{D}(f) \defeq \{\mathfrak{p}\,|\,f \notin \mathfrak{p}\}$ for $f \in P$. We associated a sheaf of sharp monoid $\overline{\mathscr{M}}_{P}$ as:
\begin{equation*}
    \overline{\mathscr{M}}_{P}(\textsf{D}(f)) \defeq \frac{S^{-1}P}{(S^{-1}P)^{*}}
\end{equation*}
where $S=\{f^{n}\}_{n \geqslant 0}$ is the face of $P$ generated by $f$. The pair $(\textsf{Spec}\,P,\overline{\mathscr{M}}_{P})$ is called \textit{affine Kato fan}. A monoidal space $(X,\mathscr{M}_{X})$ is called \textit{Kato fan} if it has an open covering of affine Kato fans. 
\end{situation}

\begin{theorem}\cite[Proposition 10.2]{KT93} \textit{Kato fans associated to log-regular log scheme}. Let $(X, \mathscr{M}_{X})$ be a log regular log scheme. Then there is an initial strict morphism $(X, \overline{\mathscr{M}}_{X}) \to F(X)$ in the category of monoidal spaces, where $F(X)$ is a Kato fan. Explicitly, there exist a Kato fan $F(X)$ and a morphism $\rho: (X,\overline{\mathscr{M}}_X) \to F(X)$ such that $\rho^{-1}(\mathscr{M}_F) \cong \overline{\mathscr{M}}_X$ and any other morphism from $(X,\overline{\mathscr{M}}_X)$ to a Kato fan factors through $\rho$.
\end{theorem}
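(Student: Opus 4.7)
The plan is to recover Kato's original construction in a way suited to the paper. The first step is to define the underlying set of $F(X)$. Using log regularity, the rank function $x \mapsto \textsf{rank}_{\mathbb{Z}}(\overline{\mathscr{M}}_{X,x}^{\textsf{gp}})$ together with the isomorphism class of the stalk $\overline{\mathscr{M}}_{X,x}$ stratifies $X$ into locally closed subsets along which $\overline{\mathscr{M}}_X$ is locally constant. I would take as the underlying set of $F(X)$ the set of those generic points of these strata, which can be characterized intrinsically as the points $x \in X$ for which the ideal $\mathscr{I}_{X,x} \subset \mathscr{O}_{X,x}$ generated by $\mathscr{M}_{X,x} \setminus \mathscr{O}_{X,x}^{*}$ equals the maximal ideal $\mathfrak{m}_{X,x}$. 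Set-theoretically, $\rho$ sends each $x \in X$ to the unique generic point of the stratum through $x$.

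Next, I would install the local affine structure. By log regularity, for any $x$ there is a Zariski chart of $\mathscr{M}_X$ near $x$ modeled on the sharp fs monoid $P_x \defeq \overline{\mathscr{M}}_{X,x}$, and the stratification of a neighborhood of $x$ is pulled back from the stratification of $\textsf{Spec}\,P_x$ by its prime ideals. Transporting topology and the sheaf $\overline{\mathscr{M}}_{P_x}$ along $\rho$ gives $F(X)$ the structure of a Kato fan locally isomorphic to $(\textsf{Spec}\,P_x, \overline{\mathscr{M}}_{P_x})$; gluing over $x$ produces the global Kato fan $F(X)$ together with the morphism $\rho \colon (X, \overline{\mathscr{M}}_X) \to F(X)$ of monoidal spaces.

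Strictness of $\rho$, namely the isomorphism $\rho^{-1}\mathscr{M}_F \cong \overline{\mathscr{M}}_X$, follows stalkwise: on the chart associated to $P_x$, both sides compute to the localization of $P_x$ at the face complementary to the stratum containing the stalk in question. For the universal property, let $\varphi \colon (X, \overline{\mathscr{M}}_X) \to G$ be any morphism to a Kato fan. Reduce to $G = (\textsf{Spec}\,Q, \overline{\mathscr{M}}_Q)$ affine; then $\varphi$ is determined by a monoid map $Q \to \Gamma(X, \overline{\mathscr{M}}_X)$. Locality of the stalk maps $\varphi_x^{\flat}$, together with the fact that all stalks of $\overline{\mathscr{M}}_X$ are localizations of the stalks at the generic points of strata, forces $\varphi$ to be constant on each stratum of $X$, hence to factor uniquely through $\rho$.

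The main obstacle I anticipate is justifying the local structure: verifying that a Zariski chart of $\overline{\mathscr{M}}_X$ at $x$ can be chosen so that both the topology of the stratification and the pullback of $\overline{\mathscr{M}}_{P_x}$ from $\textsf{Spec}\,P_x$ agree with the intrinsic data on $X$. This is where log regularity is essential — it is precisely the existence of such charts (Kato's local structure theorem for log regular schemes) that promotes the set-theoretic construction of $F(X)$ to a genuine Kato fan and makes $\rho$ a strict morphism of monoidal spaces. Once this local description is in hand, both strictness and the universal property reduce to formal monoid-theoretic manipulations.
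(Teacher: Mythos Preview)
The paper does not actually prove this theorem: it is quoted as \cite[Proposition 10.2]{KT93} and stated without proof, serving only as background for the subsequent construction of the skeleton. So there is no ``paper's own proof'' to compare against; your sketch is a reasonable outline of Kato's original argument and is the standard way this result is established.
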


\begin{lemma}\cite[Lemma 2.2.3]{brown_mazzon_2019}
Let $X$ be a log regular log scheme. Then the associated Kato fan $F(X)$ consists of the generic points of intersections of irreducible components of $D_{X}$.
\end{lemma}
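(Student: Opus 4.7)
The plan is to identify $F(X)$ explicitly with the set of generic points of intersections of irreducible components of $D_X$, using the local structure of log regular log schemes together with the universal property recalled from \cite{KT93}.

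First, I would invoke the local structure of log regular log schemes: at a point $x \in X$ lying on exactly the branches $D_{i_1},\ldots,D_{i_r}$ of $D_X$, the characteristic monoid satisfies $\overline{\mathscr{M}}_{X,x} \cong \mathbb{N}^{r}$, generated by the classes of the local equations of these components. Consequently the assignment $x \mapsto \overline{\mathscr{M}}_{X,x}$ is constant on the locally closed strata $S_{I} \defeq \big(\bigcap_{i \in I} D_i\big) \smallsetminus \bigcup_{j \notin I} D_j$ (taken componentwise), each of which is irreducible with a unique generic point $\eta_{S_I}$.

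Next, I would construct $F$ as the sharp monoidal space whose underlying set is $\{\eta_{S_I}\}_{I}$, with the topology inherited from the specialization order on $X$, and whose sheaf of sharp monoids has stalks $\overline{\mathscr{M}}_{F, \eta_{S_I}} \defeq \overline{\mathscr{M}}_{X, \eta_{S_I}} \cong \mathbb{N}^{|I|}$. Around each $\eta_{S_I}$, the open subset of generizations is naturally identified with $\textsf{Spec}\,\mathbb{N}^{|I|}$, so $F$ is locally an affine Kato fan. The morphism $\rho : (X, \overline{\mathscr{M}}_X) \to F$ sending $x \in S_I$ to $\eta_{S_I}$ is strict by construction, since its pullback induces the identity on characteristic monoids stalk by stalk.

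Finally, for the universal property, let $\psi : (X, \overline{\mathscr{M}}_X) \to G$ be a strict morphism to another Kato fan. Strictness yields $\psi^{-1}\overline{\mathscr{M}}_G \cong \overline{\mathscr{M}}_X$, so $\psi$ is determined at each $x$ by the monoid $\overline{\mathscr{M}}_{X,x}$, hence is constant on every stratum $S_I$; since the strata are precisely the fibers of $\rho$, one obtains a unique factorization $\psi = \phi \circ \rho$. The main obstacle I anticipate is verifying that the topology and monoid sheaf on $F$ assemble into a genuine Kato fan and not merely a sharp monoidal space: this reduces to checking that when $J \supseteq I$ (so that $S_J \subseteq \overline{S_I}$), the specialization map $\overline{\mathscr{M}}_{X, \eta_{S_J}} \to \overline{\mathscr{M}}_{X, \eta_{S_I}}$ realizes the localization of $\mathbb{N}^{|J|}$ at the face generated by $\{D_i\}_{i \in J \smallsetminus I}$, which is immediate from the normal crossing structure of $D_X$ afforded by log regularity.
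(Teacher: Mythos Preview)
The paper does not supply its own proof of this lemma; it is quoted verbatim as a citation from \cite[Lemma 2.2.3]{brown_mazzon_2019}. So there is no argument in the paper to compare against, and your sketch is being judged on its own merits.

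Your overall strategy is the standard one and is essentially correct: identify the log strata of $X$, show they are exactly the locally closed pieces cut out by the components of $D_X$, equip the set of their generic points with the specialization topology and the characteristic sheaf, and check the universal property of $\rho$. One substantive inaccuracy, however, is the claim that for a point $x$ lying on exactly the branches $D_{i_1},\ldots,D_{i_r}$ one has $\overline{\mathscr{M}}_{X,x}\cong\mathbb{N}^{r}$. This is true when $D_X$ has simple normal crossings, but log regularity is strictly weaker: at the deepest point of an affine toric variety $\textsf{Spec}\,K[P]$ with $P$ a non-free fs monoid (e.g.\ a quadric cone), the characteristic monoid is $P$, not $\mathbb{N}^{r}$. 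Your later step ``the open subset of generizations is naturally identified with $\textsf{Spec}\,\mathbb{N}^{|I|}$'' and the final localization check both inherit this error.

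The fix is mild but necessary: replace $\mathbb{N}^{r}$ throughout by the actual fs monoid $\overline{\mathscr{M}}_{X,\eta_{S_I}}$, and use Kato's structure theorem for log regular schemes (that \'etale-locally $X$ looks like $\textsf{Spec}\,R[P]$ for a regular local ring $R$) to see that the generizations of $\eta_{S_I}$ in $F$ correspond to the faces of this monoid, i.e.\ to $\textsf{Spec}\,\overline{\mathscr{M}}_{X,\eta_{S_I}}$. With that correction the rest of your outline goes through, and it matches the argument one finds in \cite{brown_mazzon_2019}.
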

\begin{situation}
Let $X^{+}$ be a log regular log scheme over log trait $S^{+}$, $x \in F(X^{+})$, $\overline{g_1}, \dots, \overline{g_n}$ be the generators of the monoid $\overline{\mathscr{M}}_{X^{+},x}$ and notice that $g_{1}, \dots, g_{n}$ is a system of generator of $\mathfrak{m}_{x} \subseteq \mathscr{O}_{X^{+},x}$. For any $f \in \mathscr{O}_{X^{+},x}$, 
\begin{equation*}
    f=\sum_{\beta \in \mathbb{Z}^{n}_{ \geqslant 0}}c_{\beta}g^{\beta}
\end{equation*}
in $\widehat{\mathscr{O}_{X^{+},x}}$, where $c_{\beta} \in {\widehat{\mathscr{O}_{X^{+},x}}}^{*} \cup \{0\}$.
\begin{proposition}\label{modelskeleton}\cite[Proposition 3.2.10]{brown_mazzon_2019}
Let $$\sigma_{x} \defeq \{\alpha \in \textsf{Hom}_{\textbf{Mon}}(\overline{\mathscr{
M}}_{X^{+},x}, \mathbb{R}_{ \geqslant 0})\,|\, \alpha(\varpi)=1\}$$ then there exist an unique minimal semivaluation $v_{\alpha}: \mathscr{O}_{X^{+},x} \smallsetminus \{0\} \to \mathbb{R}_{\geqslant 0}$ associated to each $\alpha \in \sigma_{x}$ such that the following properties are satisfied:
\begin{enumerate}
    \item For any $f \in \overline{\mathscr{M}}_{X^{+},x}$, we have $v_{\alpha}(f)=\alpha(\overline{f})$.
    \item For any $f \in \mathscr{O}_{X^{+},x}$ and any admissible expansion $f=\sum_{\beta \in \mathbb{Z}^{n}_{ \geqslant 0}}c_{\beta}g^{\beta}$, we have: \begin{equation*}
        v_{\alpha}(f)=\textsf{min}_{\beta}\{v_{K}(c_{\beta})+\alpha(\overline{g}^{\beta})\}
    \end{equation*}
    where $v_{K}$ is the valuation on the base field $K$.
\end{enumerate}
\end{proposition}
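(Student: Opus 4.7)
The plan is to construct $v_\alpha$ by passing to the completion $\widehat{\mathscr{O}_{X^+,x}}$, where log regularity gives a concrete monoidal description of the local ring, and then to verify that the formula in (2) determines the unique minimal semivaluation satisfying (1).

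First I would invoke Kato's structure theorem for log regular log schemes \cite{KT93}: the completion $\widehat{\mathscr{O}_{X^+,x}}$ is (non-canonically) isomorphic to a completed monoid algebra $\widehat{R}[\![P]\!]$ over a Cohen-type coefficient ring $R$ (containing a lift of $K^\circ$), with $P=\overline{\mathscr{M}}_{X^+,x}$ and the elements $g_1,\ldots,g_n$ corresponding to a fixed set of monoid generators. In this presentation every $f\in\mathscr{O}_{X^+,x}$ admits an admissible expansion $f=\sum_\beta c_\beta g^\beta$ with $c_\beta\in \widehat{R}^* \cup\{0\}$, and two such expansions differ only by unit rearrangements within each fibre of the map $\mathbb{Z}^n_{\geqslant 0}\to P$ induced by $\overline{g}_i\mapsto\overline{g}^{\,e_i}$.

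Next I would set
\[
v_\alpha(f):=\min_\beta\bigl\{v_K(c_\beta)+\alpha(\overline{g}^\beta)\bigr\},
\]
and verify that this is a non-archimedean semivaluation on the completion. The ultrametric inequality is immediate from the definition; the real content is multiplicativity. This is a Gauss-type argument adapted to the monoidal setting: among the terms of $f_1 f_2$, pick monomials $c_\beta g^\beta$ in $f_1$ and $c_{\beta'}g^{\beta'}$ in $f_2$ that minimise $v_K(c_\bullet)+\alpha(\overline{g}^\bullet)$; because $P$ is integral and sharp and $\widehat{R}[\![P]\!]$ is a domain, their product $c_\beta c_{\beta'} g^{\beta+\beta'}$ cannot be cancelled by contributions of strictly larger weight. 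I would then transfer $v_\alpha$ back along the faithfully flat inclusion $\mathscr{O}_{X^+,x}\hookrightarrow\widehat{\mathscr{O}_{X^+,x}}$. Property (1) is automatic, since any $f\in\overline{\mathscr{M}}_{X^+,x}$ is represented by a single-monomial expansion with unit coefficient, and property (2) holds by construction.

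For minimality, suppose $v'$ is any semivaluation on $\mathscr{O}_{X^+,x}$ extending $v_K$ and satisfying property (1). For an admissible expansion $f=\sum_\beta c_\beta g^\beta$, the ultrametric inequality and multiplicativity of $v'$ yield
\[
v'(f)\geqslant\min_\beta v'(c_\beta g^\beta)=\min_\beta\bigl\{v_K(c_\beta)+\alpha(\overline{g}^\beta)\bigr\}=v_\alpha(f),
\]
where $v'(c_\beta)=v_K(c_\beta)$ follows from the fact that each $c_\beta$ lies in the Cohen coefficient subring up to a unit of $\widehat{\mathscr{O}_{X^+,x}}^*$ whose image in $\overline{\mathscr{M}}_{X^+,x}$ is trivial, so $v'$ must vanish on that unit factor. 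Hence $v_\alpha$ is pointwise the smallest such semivaluation, and this characterisation yields its uniqueness.

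The main obstacle I anticipate is the combination of well-definedness (independence of the admissible expansion) and multiplicativity of $v_\alpha$; both rest on the monoidal algebra structure provided by log regularity, and the underlying argument is a Gauss lemma adapted to the saturated, possibly non-free monoid $P$. Once Kato's structure theorem has been invoked and the unique lift of units across $\overline{\mathscr{M}}_{X^+,x}^{*}$ is understood, the remaining verifications are formal.
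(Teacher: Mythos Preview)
The paper does not prove this proposition; it is quoted from \cite{brown_mazzon_2019} and used as a black box. Your sketch is the standard argument (essentially that of Musta\c{t}\u{a}--Nicaise and Brown--Mazzon): pass to the completion via Kato's structure theorem, define $v_\alpha$ by the explicit minimum formula, check it is a valuation by a Gauss-type argument on the monoid algebra, and deduce minimality from the ultrametric inequality applied to any competitor $v'$. This is correct and matches the approach in the cited reference, so there is nothing to compare against in the present paper itself.

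One small point worth tightening: in the minimality step you write $v'(c_\beta)=v_K(c_\beta)$, but as stated in the proposition the coefficients $c_\beta$ lie in $\widehat{\mathscr{O}_{X^+,x}}^{*}\cup\{0\}$, not in $K$, so $v_K(c_\beta)$ is really shorthand for $0$ when $c_\beta$ is a unit (the uniformizer $\varpi$ is already absorbed into the monoid part). Your parenthetical remark about the trivial image in $\overline{\mathscr{M}}_{X^+,x}$ is exactly what justifies this, but it would be cleaner to say directly that any semivaluation satisfying (1) must vanish on $\widehat{\mathscr{O}_{X^+,x}}^{*}$, since such units have trivial class in the characteristic monoid; then $v'(c_\beta g^\beta)=\alpha(\overline{g}^\beta)$ with no appeal to $v_K$ at all.
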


\end{situation}
\begin{definition}\label{skeleton}
Let $\widetilde{\sigma_{x}} \defeq \{v_{\alpha}\,|\,\alpha \in \sigma_{X}\}$, we define the \textit{skeleton of a log-regular log scheme} $X^{+}$ over $S^+$ as $\textsf{Sk}(X^{+}) \defeq \bigsqcup \widetilde{\sigma_{x}} /  \sim $, where the equivalence relation $\sim$ is generated by couples of the form $(v_{\alpha},v_{\alpha \circ \tau_{x,y}})$.
\end{definition}
\begin{situation}\label{Kato.fans-mumford}
For a log regular scheme $X^{+}$ over log trait $S^{+}$, the associated Kato fan $F(X)$ is vertical and saturated over $\textsf{Spec}\,\mathbb{N}=\{\emptyset,\mathbb{N}_{\geqslant 1}\}$.
We can construct $\Delta_{F(X)}$ a conical polyhedral complex with an integral structure and $\Delta^{1}_{F(X)}$ a compact conical polyhedral complex with an integral structure associated with $F(X)$ which were defined in \cite{MR0335518} for toroidal embedding without self-intersection.
Let $\{U_{\alpha}\}_{\alpha}$ be an affine covering of $(F(X),\mathscr{M}_{F(X)})$. We have the following datum of $\Delta_{F(X)}$:
\begin{itemize}
    \item $P_{\alpha}=\Gamma(U_{\alpha},\mathscr{M}_{F(X)})$. 
    \item  $\sigma_{\alpha}=\textsf{Hom}_{\textbf{Mon}}(P_{\alpha},\mathbb{R}_{\geqslant 0}) \subseteq (P^{\textsf{gp}}_{\alpha} \otimes_{\mathbb{Z}} \mathbb{R})^{ \vee } \defeq V^{\vee}_{\alpha} $.
    \item $\Delta_{F(X)}=\bigcup_{\alpha}\sigma_{\alpha}$.
    \item the integral structure is the family $(N_{\alpha})_{\alpha}$ where $N_{\alpha}=\textsf{Hom}_{\textbf{gp}}(P^{\textsf{gp}}_{\alpha}, \mathbb{Z})$.
\end{itemize}
The datum of $\Delta_{F(X)}^{1}$:
\par
Let $\pi$ be the image of the map $\mathbb{N} \to P_{\alpha} \ne \{1\}$
\begin{itemize}
    \item $V_{\alpha}^{\vee,1} \defeq \{x \in V_{\alpha}^{\vee}\,|\, x(\pi)=1 \}$.
    \item $\sigma_{\alpha}^{1} \defeq \sigma_{\alpha} \cap V_{\alpha}^{\vee,1}$.
    \item $\Delta_{F(X)}^{1}=\bigcup_{\alpha}\sigma_{\alpha}^{1}$.
    \item  the integral structure is the family $(N^{1}_{\alpha})_{\alpha}$, where $N^{1}_{\alpha} = N_{\alpha} \cap V_{\alpha}^{\vee,1}$.
\end{itemize}
\end{situation}

\begin{lemma}
Let $X^{+}$ be a log regular scheme over log trait $S^{+}$, then $$\textsf{Sk}(X^{+}) \cong \Delta_{F(X^{+})}^{1}$$ as compact conical polyhedral complexes. 
\end{lemma}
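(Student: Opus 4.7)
The plan is to construct an explicit homeomorphism $\Delta_{F(X^{+})}^{1} \to \textsf{Sk}(X^{+})$ cone by cone, using Proposition \ref{modelskeleton} on each cone and then checking that the gluing data on both sides agree. By the construction of the associated Kato fan $F(X^{+})$, the points of $F(X^{+})$ are in bijection with the strata of $X^{+}$, i.e.\ with the generic points $x$ of intersections of irreducible components of $D_{X^{+}}$. For each such $x$, on the skeleton side we have the cone $\widetilde{\sigma_{x}} = \{v_{\alpha} \mid \alpha \in \sigma_{x}\}$, while on the polyhedral side we have the cone $\sigma^{1}_{x} \subseteq V^{\vee,1}_{x}$.

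First I would fix a point $x \in F(X^{+})$ and note that the affine piece $P_{x} = \Gamma(U_{x},\mathscr{M}_{F(X)})$ is canonically identified with $\overline{\mathscr{M}}_{X^{+},x}$, and that the structure map $\mathbb{N} \to P_{x}$ sends the generator to the image $\pi$ of the uniformizer $\varpi$. Under this identification, the defining condition $\alpha(\varpi)=1$ of $\sigma_{x}$ in Proposition \ref{modelskeleton} coincides with the affine slice condition $x(\pi)=1$ cutting out $\sigma^{1}_{x}$ inside $\sigma_{x} = \textsf{Hom}_{\textbf{Mon}}(P_{x},\mathbb{R}_{\geqslant 0})$. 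By Proposition \ref{modelskeleton}, the assignment $\alpha \mapsto v_{\alpha}$ yields a well-defined bijection $\sigma^{1}_{x} \to \widetilde{\sigma_{x}}$, and the explicit formula $v_{\alpha}(f) = \textsf{min}_{\beta}\{v_{K}(c_{\beta})+\alpha(\overline{g}^{\beta})\}$ on admissible expansions shows that this bijection is a homeomorphism when $\widetilde{\sigma_{x}}$ is equipped with the topology induced from $X^{\textsf{an}}$ (the topology of pointwise convergence on seminorms corresponds to the topology of pointwise evaluation on monoid homomorphisms).

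Next I would check that the gluing data on both sides match. On the $\Delta^{1}$ side, whenever $y \in \overline{\{x\}}$ in $F(X^{+})$ there is a localization map $P_{y} \to P_{x}$ exhibiting $\sigma^{1}_{x}$ as a face of $\sigma^{1}_{y}$ via the dual map $\sigma^{1}_{x} \hookrightarrow \sigma^{1}_{y}$ sending $\alpha \mapsto \alpha \circ \tau_{x,y}$. On the skeleton side, Definition \ref{skeleton} generates $\sim$ by exactly the couples $(v_{\alpha},v_{\alpha \circ \tau_{x,y}})$, so the quotient glues the $\widetilde{\sigma_{x}}$ along precisely the same face identifications. Combining the pointwise bijections above, we obtain a continuous bijection $\Delta^{1}_{F(X^{+})} \to \textsf{Sk}(X^{+})$; since $\Delta^{1}_{F(X^{+})}$ is compact (each $\sigma^{1}_{x}$ is a compact polytope, the hyperplane $\alpha(\varpi)=1$ cutting a bounded slice out of the rational polyhedral cone $\sigma_{x}$, and there are finitely many strata) and the Berkovich skeleton is Hausdorff, this continuous bijection is a homeomorphism. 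Finally, the integral structure $N^{1}_{\alpha} = \textsf{Hom}_{\textbf{gp}}(P^{\textsf{gp}}_{\alpha},\mathbb{Z}) \cap V_{\alpha}^{\vee,1}$ corresponds on the skeleton side to the divisorial (monomial) semivaluations with integer values on $\overline{\mathscr{M}}^{\textsf{gp}}_{X^{+},x}$, so the integral structures agree.

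The main obstacle I anticipate is the compatibility check for the equivalence relation $\sim$: one must verify that the minimal semivaluation $v_{\alpha \circ \tau_{x,y}}$ on $\mathscr{O}_{X^{+},y}$ extends (canonically) to the minimal semivaluation $v_{\alpha}$ on $\mathscr{O}_{X^{+},x}$, and, conversely, that no spurious identifications arise. This reduces to functoriality of the admissible-expansion formula of Proposition \ref{modelskeleton} under localization of the monoid, together with the fact that the generization maps $\tau_{x,y}$ on the Kato-fan level encode precisely which faces of $\overline{\mathscr{M}}_{X^{+},x}$ are inverted when passing to $\overline{\mathscr{M}}_{X^{+},y}$; once this functoriality is established, the remaining steps are formal.
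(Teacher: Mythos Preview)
Your approach is essentially the paper's: the paper's entire proof is the single sentence ``This result is direct from the definitions above,'' and what you have written is a careful unpacking of that tautology --- identifying $P_x$ with $\overline{\mathscr{M}}_{X^{+},x}$, observing that the slice condition $\alpha(\varpi)=1$ is the same on both sides, invoking Proposition~\ref{modelskeleton} for the cone-by-cone bijection, and matching the gluing relations. In that sense the two proofs coincide, yours simply being more explicit.

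There is, however, one genuine slip in your argument. You assert that each $\sigma^{1}_{x}$ is a \emph{compact} polytope because the hyperplane $\alpha(\varpi)=1$ cuts a bounded slice out of $\sigma_{x}$, and then use the compact--Hausdorff trick to upgrade a continuous bijection to a homeomorphism. This is false whenever $D_{X^{+}}$ has horizontal components. Already in the paper's own example $\mathcal{X}_{0,4}^{+}$ (see \ref{n=4} and \ref{s4}), the characteristic monoid at $\eta_{1}$ is $\mathbb{N}^{2}$ with coordinates $(\overline{\varpi},\overline{u})$, the full cone is $\mathbb{R}_{\geqslant 0}^{2}$, and the slice $\alpha(\varpi)=1$ is the unbounded ray $\{1\}\times\mathbb{R}_{\geqslant 0}$; indeed $\textsf{Sk}(\mathcal{X}_{0,4}^{+})$ is three half-rays, not a compact space. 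The phrase ``compact conical polyhedral complex'' in the statement is KKMS terminology for the affine-slice complex $\Delta^{1}$ (as opposed to the full cone complex $\Delta$), not a claim of topological compactness. To repair your argument, drop the compactness appeal and instead observe directly that the inverse map $v_{\alpha}\mapsto\alpha$ is continuous: it is given by $\alpha(\overline{g}) = -\log|g|_{v_{\alpha}}$ for $g\in\mathscr{M}_{X^{+},x}$, which is continuous for the topology of pointwise convergence on $X^{\textsf{an}}$. With that fix, your proof stands.
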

\begin{proof}
This result is direct from the definitions above.
\end{proof}
\begin{subsection}{Examples}
In this subsection, we give an explicit description of $\textsf{Sk}{(\mathcal{X}_{0.n}^{+})}$ for the cases $n=4,5$ in terms of valuation.
\begin{situation}{\textbf{Explicit description of $\textsf{Sk}(\mathcal{X}_{0,n}^{+})$ for $n=4,5$.
} }

  To study the essential skeleton $\textsf{Sk}^{\textsf{ess}}(\overline{\textsf{M}}_{0,n},{\overline{\textsf{M}}_{0,n}}\smallsetminus \textsf{M}_{0,n})$ for $n \geqslant 3$, by proposition \ref{biress}, we take a good dlt minimal model $\mathcal{X}_{0,n}^{+}=(\mathcal{X}_{0,n},\overline{D}_{\overline{\textsf{M}}_{0,n}}+(\mathcal{X}_{0,n})_{s,\textsf{red}})$ of $(\overline{\textsf{M}}_{0,n},{\overline{\textsf{M}}_{0,n}}\smallsetminus \textsf{M}_{0,n})$ by restricting the coefficients on the valuation ring $K^{\circ}$, and study the Berkovich skeleton of $\textsf{Sk}(\mathcal{X}_{0,n}^{+})$. In general, to describe $\textsf{Sk}(\mathcal{X}_{0,n}^{+})$, we need Kapranov's blow-ups construction of $\overline{\textsf{M}}_{0,n}$ \cite{MR1237834} in order to get the local equations of the boundary divisors and then study the intersection of boundary divisors of $\overline{\textsf{M}}_{0,n}$.
\par
\end{situation}
\begin{situation}{{\textbf{For} $n=4$}}\label{n=4}

For the log regular log scheme ${\overline{\textsf{M}}}_{0,4}$, we have ${\overline{\textsf{M}}}_{0,4} \cong \mathbb{P}_K^{1} $ and it equipped with the divisorial log structure associated to the effective divisor $\{ 0,1,\infty\}$. So we can take $\mathcal{X}_{0,4}^{+} \defeq (\mathbb{P}_{K^{\circ}}^1,{\mathscr{M}_{{D_{\mathcal{X}_{0,4}^{+}}}}})$ as the log model of ${\overline{\textsf{M}}}_{0,4}$ where ${D_{\mathcal{X}_{0,4}^{+}}} = \mathbb{P}_k^1 + \overline{[0:1]} + \overline{[1:0]} + \overline{[1:1]}$, and let $\mathbb{P}_{K^{\circ}}^1=\textsf{Proj}\,{K^{\circ}}[T_0,T_1]$, we have $(\mathcal{X}_{0,4}^{+})_{s}=\mathbb{P}_k^{1}=V_{+}({{\varpi})}$, $E_{1}=V_{+}(T_0)=\overline{[0:1]}$, $E_{2}=V_{+}(T_1)=\overline{[1:0]}$, $E_{3}=V_{+}(T_1-T_0)=\overline{[1:1]}$. Now it's easy to see that $D_{\mathcal{X}_{0,4}^{+}}$ is a divisor with strict normal crossing in $\mathcal{X}_{0,4}^{+}=\mathbb{P}^{1}_{K^{\circ}}$ and $(D_{\mathcal{X}_{0,4}^{+}})_{s}$ is a divisor with normal crossing relative to $K$ in ${\overline{\textsf{M}}}_{0,4}$. Thus $(\mathbb{P}^{1}_{K^{\circ}},D_{\mathcal{X}_{0,4}^{+}})$ is logarithmic smooth over $(\textsf{Spec}\,K^{\circ},(\varpi))$ by Lemma \ref{semistable smooth}. 
Similarly we have $(\overline{\textsf{M}}_{0,4}, \mathscr{M}_{\overline{\textsf{M}}_{0,4} \setminus \textsf{M}_{0,4}})$ is log regular (toroidal embedding without self-intersection) by \cite[Proposition 8.3]{KT93}. Let $F(\mathcal{X}_{0,4}^{+})$ be the Kato fan associated to the log scheme $\mathcal{X}_{0,4}^{+}$, then we have:
\begin{equation}
    {\textsf{M}}^{\textsf{trop}}_{0,4} \cong \Delta^{1}_{F(\mathcal{X}_{0,4}^{+})} \cong \textsf{Sk}(\mathcal{X}_{0,4}^{+})
\end{equation}
\end{situation}
\begin{situation}\label{s4} \textbf{Explicit description for} $\textsf{Sk}(\mathcal{X}^{+}_{0,4})$.
\par
Let's denote $\eta_{1}, \eta_{2}, \eta_{3}$ be the generic points of the intersection $\overline{E}_{1} \cap (\mathcal{X}_{0,4})_{s}$, $\overline{E}_{2} \cap (\mathcal{X}_{0,4})_{s}$, $\overline{E}_{3} \cap (\mathcal{X}_{0,4})_{s}$ respectively. Then we have:
\begin{enumerate}
    \item $\mathscr{O}_{\mathcal{X}_{0,4},\eta_{1}} \cong K^{\circ}[u]_{(\varpi,u)}$
    \item $\mathscr{O}_{\mathcal{X}_{0,4},\eta_{2}} \cong K^{\circ}[u]_{(\varpi,u)}$
    \item $\mathscr{O}_{\mathcal{X}_{0,4},\eta_{3}} \cong K^{\circ}[u]_{(\varpi,u-1)}$
\end{enumerate}
Thus we have:
\begin{enumerate}
    \item $\widehat{\mathscr{O}_{\mathcal{X}_{0,4},\eta_{1}}} \cong K^{\circ}\langle u \rangle \left[\kern-0.15em\left[ {u} 
 \right]\kern-0.15em\right]$
     \item $\widehat{\mathscr{O}_{\mathcal{X}_{0,4},\eta_{2}}} \cong K^{\circ}\langle u \rangle \left[\kern-0.15em\left[ {u} 
 \right]\kern-0.15em\right]$
       \item $\widehat{\mathscr{O}_{\mathcal{X}_{0,4},\eta_{3}}} \cong K^{\circ}\langle u \rangle \left[\kern-0.15em\left[ {u-1} 
 \right]\kern-0.15em\right]$
\end{enumerate}
For $f \in \mathscr{O}_{\mathcal{X}_{0,4},\eta_{1}}$, if $f$ is a polynomial in $K^{\circ}[u]$, by $\left| {\cdot} \right|_{\alpha}=\textsf{exp}(-v_{\alpha}(\cdot))$ and $\alpha(\varpi)=1$, thus $|\lambda|_{\alpha}=|\lambda|_{K}$ for any $\lambda \in K^{\circ}$ and for $f=\sum^{m}_{n=1}a_{n}u^{n}$,
\begin{equation}\label{04mv1}
   |f|_{\alpha}=\textsf{exp}(-\textsf{min}_{n}(v_{K}(a_{n})+\alpha(\overline{u}^{n})))=\textsf{max}_{n}(|a_{n}|_{K}\textsf{exp}(-n\alpha(\overline{u}))).
\end{equation}
\end{situation}
\begin{situation}\label{m05sk}{\textbf{For} $n=5$}
\par
By the definition \ref{Kapranov}, we know that $\overline{\textsf{M}}_{0,5} \cong \textsf{Bl}_{p_{1},p_{2},p_{3},p_{4}}\mathbb{P}^{2}_{K}$, where four points $p_{1},p_{2},p_{3},p_{4}$ are in general position, that is, no three of them lying on a projective line. By linear transformation, we can assume these points are given by \begin{equation*}
    p_{1}=[1:0:0],\; p_{2}=[0:1:0],\; p_{3}=[0:0:1], \; p_{4}=[1:1:1]
\end{equation*}
The irreducible components of the boundary divisor are the exceptional curves on $\overline{\textsf{M}}_{0,5}$, Let $E_{i}$ be the class of exceptional curve corresponding to the total transformation of $p_i$ and $H$ be the class of hyperplane corresponding to the total transformation of a line. Then, $\{E_1,E_2,E_3,E_4,H\}$ is a basis of $\textsf{Pic}(\overline{\textsf{M}}_{0,5})$ and \begin{equation*}
    (E_{i},E_{j})=-\delta_{ij},\;(E_{i},H)=0,\;(H,H)=1
\end{equation*}
Thus by the basic properties of the exceptional curve, we can get the 10 exceptional curves are: 
\begin{enumerate}
    \item $\{E_{i}\}_{1 \leqslant i \leqslant 4}$
    \item $\{H-E_{i}-E_{j}\}_{i \ne j}$
\end{enumerate}
\begin{figure}

\centering

\begin{tikzpicture}[scale=1.3]
\draw (18:2cm) -- (90:2cm) -- (162:2cm) -- (234:2cm) --
(306:2cm) -- cycle;
\draw (18:1cm) -- (162:1cm) -- (306:1cm) -- (90:1cm) --
(234:1cm) -- cycle;
\foreach \x in {18,90,162,234,306}{
\draw (\x:1cm) -- (\x:2cm);
\draw[black,fill=black] (\x:2cm) circle (1pt);
\draw[black,fill=black] (\x:1cm) circle (1pt);
}
\node[label=$E_{34}$] at (18:2cm) {}; 
\node[label=$E_{3}$] at (90:2cm) {}; 
\node[label=$E_{23}$] at (162:2cm) {}; 
\node[label=left:$E_{2}$] at (234:2cm) {}; 
\node[label=right:$E_{12}$] at (306:2cm) {}; 
\node[label=above:$E_{4}$] at (18:1cm) {}; 
\node[label=right:$E_{13}$] at (90:1cm) {}; 
\node[label=$E_{14}$] at (162:1cm) {}; 
\node[label=left:$E_{24}$] at (234:1cm) {}; 
\node[label=right:$E_{1}$] at (306:1cm) {}; 
\end{tikzpicture}

\end{figure}

For each $H-E_{i}-E_{j}$ can be considered as the strict transformation of a line $L_{ij}$ cross the points $p_{i},p_{j}$.
we can use the following Peterson graph to represent the intersection situations among these exceptional curves:

where $E_{ij} \defeq H-E_{i}-E_{j}$, and we send the tropical curves $\{\delta_{i5}\}_{1 \leqslant i \leqslant 4}$ to the class of exceptional curves $\{E_{i}\}_{1 \leqslant i \leqslant 4}$ and send $\{\delta_{ij}\}$ to $\{H-E_{k}-E_{l}\}$ where $\{k,l\}$ is disjoint from $\{i,j,5\}$.
\par
Now let $\mathcal{X}_{0,5} \defeq \overline{\mathcal{M}}_{0,5} \otimes_{\mathbb{Z}} K^{\circ}$, and $D_{\mathcal{X}_{0,5}} \defeq \sum{\overline{E}_{i}}+\sum{\overline{E}_{ij}}+(\mathcal{X}_{0,5})_{s}$, then $\mathcal{X}^{+}_{0,5} \defeq (\mathcal{X}_{0,5},D_{\mathcal{X}_{0,5}})$ is a log regular model of the log regular scheme $\overline{\textsf{M}}_{0,5}$, and it's easy to see:
\begin{itemize}

   \item $\overline{E} \cap \overline{E'} \ne \emptyset $ if and only if $E \cap E' \ne \emptyset$ for any exceptional curve $E,E'$.
\end{itemize}
Without loss of generality, Let's consider the collection $\{\overline{E}_{1},\overline{E}_{12},\overline{E}_{13},\overline{E}_{14}\}$ and denote $\eta_1,\eta_{12},\eta_{13},\eta_{14},\eta_{112},\eta_{113},\eta_{114}$ as the generic points of 
\begin{gather*}
    \overline{E}_{1} \cap (\mathcal{X}_{0,5})_{s}, \;\overline{E}_{12} \cap (\mathcal{X}_{0,5})_{s}, \; \overline{E}_{13} \cap (\mathcal{X}_{0,5})_{s},\; \overline{E}_{14} \cap (\mathcal{X}_{0,5})_{s}, \\ \overline{E}_{1} \cap \overline{E}_{12} \cap (\mathcal{X}_{0,5})_{s}, \; \overline{E}_{1} \cap \overline{E}_{13} \cap (\mathcal{X}_{0,5})_{s}, \; \overline{E}_{1} \cap \overline{E}_{14} \cap (\mathcal{X}_{0,5})_{s}
\end{gather*}
respectively. Thus we have:
\begin{itemize}
    \item $\sigma_{\eta_{112}}=\{\alpha \in \textsf{Hom}_{\textbf{Mon}}(\overline{\mathscr{M}}_{\mathcal{X}^{+}_{0,5},\eta_{112}}, \mathbb{R}_{ \geqslant 0})\,|\, \alpha(\varpi)=1\} \cong  \mathbb{R}^{2}_{ \geqslant 0} $
    \item $\sigma_{\eta_{113}}=\{\alpha \in \textsf{Hom}_{\textbf{Mon}}(\overline{\mathscr{M}}_{\mathcal{X}^{+}_{0,5},\eta_{113}}, \mathbb{R}_{ \geqslant 0})\,|\, \alpha(\varpi)=1\} \cong  \mathbb{R}^{2}_{ \geqslant 0} $
    \item $\sigma_{\eta_{114}}=\{\alpha \in \textsf{Hom}_{\textbf{Mon}}(\overline{\mathscr{M}}_{\mathcal{X}^{+}_{0,5},\eta_{114}}, \mathbb{R}_{ \geqslant 0})\,|\, \alpha(\varpi)=1\} \cong  \mathbb{R}^{2}_{ \geqslant 0} $
    \item $\sigma_{\eta_{1}}=\{\alpha \in \textsf{Hom}_{\textbf{Mon}}(\overline{\mathscr{M}}_{\mathcal{X}^{+}_{0,5},\eta_{1}}, \mathbb{R}_{ \geqslant 0})\,|\, \alpha(\varpi)=1\} \cong  \mathbb{R}_{ \geqslant 0} $
    \item $\sigma_{\eta_{12}}=\{\alpha \in \textsf{Hom}_{\textbf{Mon}}(\overline{\mathscr{M}}_{\mathcal{X}^{+}_{0,5},\eta_{12}}, \mathbb{R}_{ \geqslant 0})\,|\, \alpha(\varpi)=1\} \cong  \mathbb{R}_{ \geqslant 0} $
    \item $\sigma_{\eta_{13}}=\{\alpha \in \textsf{Hom}_{\textbf{Mon}}(\overline{\mathscr{M}}_{\mathcal{X}^{+}_{0,5},\eta_{13}}, \mathbb{R}_{ \geqslant 0})\,|\, \alpha(\varpi)=1\} \cong  \mathbb{R}_{ \geqslant 0} $
    \item $\sigma_{\eta_{14}}=\{\alpha \in \textsf{Hom}_{\textbf{Mon}}(\overline{\mathscr{M}}_{\mathcal{X}^{+}_{0,5},\eta_{14}}, \mathbb{R}_{ \geqslant 0})\,|\, \alpha(\varpi)=1\} \cong  \mathbb{R}_{ \geqslant 0} $
\end{itemize}
Note that $\sigma_{\eta_{1}}, \sigma_{\eta_{12}},\sigma_{\eta_{13}},\sigma_{\eta_{14}}$ could be embedded into $\sigma_{\eta_{112}},\sigma_{\eta_{113}},\sigma_{\eta_{114}}$ by the cospecialization map as a face of cone. 
Thus, by Proposition \ref{modelskeleton}, we have $\textsf{Sk}(\mathcal{X}^{+}_{0,5}) \cong {\textsf{M}}_{0,5}^{\textsf{trop}}$.
\end{situation}
\begin{situation}\label{s5} \textbf{Explicit description for} $\textsf{Sk}(\mathcal{X}^{+}_{0,5})$
\par
By the Proposition \ref{modelskeleton}, it's sufficient to clarify the local equations of each exceptional divisor and their intersections in order to get the explicit description of each valuation $v_{\alpha}$. For $\mathbb{P}^{2}_{K}=\textsf{Proj}\,K[T_{0},T_{1},T_{2}]$, we have $[1:0:0]=(T_{1},T_{2})$, $[0:1:0]=(T_{0},T_{2})$, $[0:0:1]=(T_{0},T_{1})$, $[1:1:1]=(T_{1}-T_{0},T_{2}-T_{0})$, $L_{12}=(T_{2})$.
\par
Without loss of generality, we take the collection $\{E_1,E_{12},E_{13},E_{14}\}$, in order to study the local equation of $E_{1} \cap E_{1i}$, it's suffice to take an open affine subscheme $U  \subseteq  X$ such that $U \cap Z=\{p_{i}\}$ and study the blowing up restricted on $U$: 
\begin{equation*}
    \pi_{U}: \pi^{-1}(U) \cong \textsf{Bl}_{p_{i}}U \to U
\end{equation*}
Let's do the computation for $E_{1} \cap E_{12}$ in detail:
\par
Consider $D_{+}(T_{0})=\textsf{Spec}\,K[\frac{{{T_1}}}{{{T_0}}},\frac{{{T_2}}}{{{T_0}}}] \cong \textsf{Spec}\,K[x_{1},x_{2}]$, then we have $p_{1},p_{4} \in D_{+}(T_{0})$ which are define by $(x_{1},x_{2})$ and $(x_{1}-1,x_{2}-1)$ respectively and $L_{12}=V(x_{2})$ Take $U=\textsf{Spec}\,K[x_{1},x_{2},(x_{1}-1)^{-1}]$, we have $\textsf{Bl}_{p_{1}}U=U_{1} \cup U_{2} $ with $U_{1}=\textsf{Spec}\,K[x_{1}, \frac{{{x_2}}}{{{x_1}}}, (x_{1}-1)^{-1}]$ and $U_{2}=\textsf{Spec}\,K[x_{2},\frac{{{x_1}}}{{{x_2}}},(x_{1}-1)^{-1}]$. Now in $U_{1}$ the exceptional curve $E_{1}|_{U_{1}}=V(x_{1})$ and the strict transformation $\widetilde{L_{12}}|_{U_{1}}=E_{12}|_{U_{1}}=V(\frac{{{x_2}}}{{{x_1}}})$. Thus, by the argument above, we have $\eta_{112}=(\varpi,x_{1},\frac{x_{2}}{x_{1}})$ in $$\textsf{Spec}\,K^{\circ}[x_{1},\frac{{{x_2}}}{{{x_1}}},(x_{1}-1)^{-1}]$$ and

    \begin{gather*}
        \mathscr{O}_{\mathcal{X}_{0,5},\eta_{112}} \cong K^{\circ}[x_{1},\frac{{{x_2}}}{{{x_1}}},(x_{1}-1)^{-1}]_{(\varpi,x_{1},\frac{x_{2}}{x_{1}})} \\ \widehat{\mathscr{O}_{\mathcal{X}_{0,5},\eta_{112}}} \cong K^{\circ}\langle x_{1},\frac{{{x_2}}}{{{x_1}}},(x_{1}-1)^{-1}\rangle \left[\kern-0.15em\left[ {x_{1},\frac{{{x_2}}}{{{x_1}}}} 
 \right]\kern-0.15em\right] 
    \end{gather*}

By the same argument in the case $n=4$, for any polynomial $f=\sum_{\beta}a_{\beta}x_{1}^{\beta_{1}}x^{\beta_{2}}_{{2}}$ in $K^{\circ}[x_{1},x_{2}]$ ,we have:
\begin{equation}\label{05mv1}
    |f|_{\alpha}=\textsf{max}_{\beta}(|a_{\beta}|_{K}\textsf{exp}(-\beta_{1}\alpha(\overline{x}_{1})-\beta_{2}(\alpha(\overline{\frac{x_{2}}{x_{1}}})+\alpha(\overline{x}_{1})))
\end{equation}
\end{situation}
\end{subsection}

\begin{situation}\textbf{For n $\geqslant$ 6}
    \begin{definition}(Kapranov)\label{Kapranov}
    Let $p_{1},\dots,p_{n-1}$ be the points in general position in $\mathbb{P}^{n-3}$, then $\overline{\textsf{M}}_{0,n}$ is the iterated blow-ups of $\mathbb{P}^{n-3}$ along the points $p_{1},\dots,p_{n-1}$, along the strict transformation $\widetilde{l}_{ij}$ of the lines $l_{ij}=\overline{p_{i}p_{j}}$ for $i \ne j$, until along the strict transformation $\widetilde{l}_{j_{1}j_{2}\cdots j_{n-4}}$ of the linear spaces $l_{j_{1}j_{2}\cdots j_{n-4}}$ contain $\{p_{j_{1}},p_{j_{2}},\dots,p_{j_{n-4}}\}$ Thus we have:
\begin{equation*}
    \overline{\textsf{M}}_{0,n} \cong \textsf{Bl}_{\{\widetilde{l}_{j_{1}j_{2}\cdots j_{n-4}}\}}\cdots\textsf{Bl}_{\{\widetilde{l}_{ij}\}_{i \ne j}}(\textsf{Bl}_{p_{1},\dots,p_{n-1}}\mathbb{P}^{n-3})
\end{equation*}
Let $\big\{E_{j_{1}j_{2}\cdots j_{n-k}}\big\}_{4 \leqslant k \leqslant n-1 }$ be the exceptional divisors corresponding to $$\big\{l_{j_{1}j_{2}\cdots j_{n-k}}\big\}_{4 \leqslant k \leqslant n-1 }$$ respectively. Let $H$ be the hyperplane class on $\overline{\textsf{M}}_{0,n}$. Then there are $2^{n-1}-n-1$ boundary divisors we can get:
\begin{enumerate}
    \item $\{E_{i}\}_{1 \leqslant i \leqslant n-1}$
    \item $\{E_{ij}\}_{i \ne j}$
    \item[$(n-k)$] $\big\{E_{j_{1}j_{2}\cdots j_{n-k}}\big\}_{j_{1}j_{2}\cdots j_{n-k} \in [n-1]}$
    \item[$(n-3)$] $\{H-\sum^{n-3}_{t=1}E_{j_{t}}-\sum E_{j_{t}j_{s}}-\cdots-\sum E_{j_{t_{1}}\cdots{j_{t_{n-4}}}}\}_{j_{1}j_{2}\cdots j_{n-3} \in [n-1]}$
\end{enumerate}
For each $H-\sum^{n-3}_{t=1}E_{j_{t}}-\sum E_{j_{t}j_{s}}-\cdots-\sum E_{j_{t_{1}\cdots{j_{t_{n-4}}}}}$ can be considered as the strict transformation of the linear space $l_{j_{t_{1}\cdots{j_{t_{n-3}}}}}$ cross the points $p_{j_{1}}, p_{j_{2}},\dots, p_{j_{n-3}}$.
  \end{definition}
  \begin{remark}
  We can send the tropical curves $\{\delta_{in}\}_{1 \leqslant i \leqslant n}$ to the exceptional divisors $E_{i}$, tropical curves $\{\delta_{ijn}\}_{i \ne j}$ to the exceptional divisors $E_{ij}$, tropical curves $\{\delta_{ij}\}$ to $H-\sum^{n-3}_{t=1}E_{j_{t}}-\sum E_{j_{t}j_{s}}-\cdots-\sum E_{j_{t_{1}}\cdots{j_{t_{n-4}}}}$ where $\{j_{{1}},\dots,{j_{{n-3}}}\}$ is disjont from $\{i,j,n\}$.
  \end{remark}
  
  \begin{proposition}

Let $\mathcal{X}_{0,n}^{+}$ be a log regular model of ${\textsf{M}}_{0,n}$, then we have $$\overline{\textsf{M}}^{\textsf{trop}}_{0,n} \cong \textsf{Sk}(\mathcal{X}_{0,n}^{+}) \cong \Delta_{F(\mathcal{X}_{0,n}^{+})}^{1}$$
\end{proposition}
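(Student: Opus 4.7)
The second isomorphism $\textsf{Sk}(\mathcal{X}_{0,n}^{+}) \cong \Delta_{F(\mathcal{X}_{0,n}^{+})}^{1}$ is the general Lemma already stated in the excerpt, applied to the log regular model $\mathcal{X}_{0,n}^{+}$ over the log trait $(\textsf{Spec}\,K^{\circ},(\varpi))$; its proof is ``direct from the definitions'' as noted there. So the real content of the Proposition is the first isomorphism $\overline{\textsf{M}}^{\textsf{trop}}_{0,n} \cong \Delta^{1}_{F(\mathcal{X}_{0,n}^{+})}$, which I would prove by producing a bijection between the cones on each side that preserves face relations and integral structures.

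The plan is to use the description of the Kato fan (from the Lemma of Brown--Mazzon recalled in the excerpt) that identifies $F(\mathcal{X}_{0,n}^{+})$ as the set of generic points of intersections of irreducible components of $D_{\mathcal{X}_{0,n}^{+}}$. By the Kapranov description in Definition \ref{Kapranov}, the irreducible components of the horizontal part of $D_{\mathcal{X}_{0,n}^{+}}$ are the closures of the exceptional divisors $\overline{E}_{j_{1}\cdots j_{n-k}}$ and the strict transforms $\overline{H - \sum E_{j_{t}} - \cdots}$; together with the reduced special fibre $(\mathcal{X}_{0,n})_{s}$ they form the full boundary $D_{\mathcal{X}_{0,n}^{+}}$. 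The dictionary recalled in the remark after Definition \ref{Kapranov} assigns to each such boundary divisor a one-edge tropical curve $\delta_{I}$, i.e. a ray of $\overline{\textsf{M}}^{\textsf{trop}}_{0,n}$. The condition $\alpha(\varpi)=1$ in the definition of $\sigma_{x}$ accounts for the inclusion of $(\mathcal{X}_{0,n})_{s}$ as a divisor, so that each cone $\widetilde{\sigma_{x}}$ has length-function coordinates on the remaining edges, matching the $\mathbb{R}_{\geqslant 0}^{E(T)}$-structure of a cone in $\overline{\textsf{M}}^{\textsf{trop}}_{0,n}$.

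Next I would verify that the intersection $\overline{E}_{I_{1}}\cap \cdots \cap \overline{E}_{I_{r}}\cap (\mathcal{X}_{0,n})_{s}$ is non-empty precisely when the partitions $\{I_{1},I_{1}^{c}\},\dots,\{I_{r},I_{r}^{c}\}$ are pairwise nested (the combinatorial condition for a stable genus-$0$ tree to carry edges labelled by $I_{1},\dots,I_{r}$), so that the generic points of non-empty strata are in bijection with combinatorial types of stable trees. This is exactly the index set of cones of $\overline{\textsf{M}}^{\textsf{trop}}_{0,n}$. The base cases $n=4,5$ are worked out in Sections~\ref{s4} and~\ref{s5}; the inductive step uses Kapranov's iterated blow-up construction, noting that the $(n-k)$th stage blow-up only separates strict transforms corresponding to compatible partitions. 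The cospecialization maps from a deeper stratum to a shallower one induce the face inclusions in $\Delta^{1}_{F(\mathcal{X}_{0,n}^{+})}$, and they correspond on the tropical side to edge-contractions of stable trees, so the gluing of cones is preserved.

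The main obstacle is the combinatorial bookkeeping for $n\geqslant 6$: one must check that the strict-transform divisors $H-\sum E_{j_{t}}-\sum E_{j_{t}j_{s}}-\cdots$ and the exceptional divisors $E_{j_{1}\cdots j_{n-k}}$ intersect in exactly the strata predicted by compatibility of partitions, without spurious higher intersections coming from the non-separated lines in $\mathbb{P}^{n-3}$ before blow-up. A clean way to handle this is to invoke the strict normal crossing property (which holds after Kapranov's sequence of blow-ups at smooth centres meeting boundary transversally) to bound the multiplicity of intersections, and then to match dimensions $n-3-\dim(\text{stratum})$ with the number of edges of the corresponding tree. Once these intersection-combinatorial facts are in place, the isomorphism $\overline{\textsf{M}}^{\textsf{trop}}_{0,n}\cong \Delta^{1}_{F(\mathcal{X}_{0,n}^{+})}$ follows by assembling the cone bijection together with the identification of integral structures coming from the monoids $\overline{\mathscr{M}}_{\mathcal{X}_{0,n}^{+},\eta}$.
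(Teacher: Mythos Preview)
Your proposal is correct and follows essentially the same strategy as the paper: identify the Kato fan with the boundary stratification, match boundary components to rays $\delta_{I}$, and show that a collection of boundary divisors meets if and only if the corresponding partitions are pairwise nested, so that cones on both sides are indexed by the same stable trees with matching face relations. The only difference is that what you flag as the ``main obstacle'' --- the combinatorial bookkeeping for $n\geqslant 6$ checking that $\overline{E}_{I_1}\cap\cdots\cap\overline{E}_{I_r}$ is nonempty precisely when the partitions are nested --- is dispatched in the paper in one line by citing Keel's result that $D^{I}_{J}\cap D^{I'}_{J'}=\emptyset$ if and only if there is no inclusion among any two of $I,J,I',J'$; you do not need to re-derive this from Kapranov's iterated blow-up or an inductive SNC argument.
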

\begin{proof}
Let $D_{J}^{I}$ and $D_{J'}^{I'}$ be two irreducible boundary divisors of $\overline{\textsf{M}}_{0,n}$, where $$|I|,|J|,|I'|,|J'| \geqslant 2$$ and $I \cup J=[n],I' \cup J'=[n] $, then by \cite{Keel92}, $D_{J}^{I} \cap D_{J'}^{I'} = \emptyset$ iff there are no inclusions among any two of $I,J,I',J'$. Then the polyhedron $\sigma_{\eta}$ associated to the generic point $\eta$ of top intersections of boundary divisors is isomorphic to $\mathbb{R}_{\geqslant 0}^{n-3}$, thus $\sigma_{\eta}$ is equal to the polyhedron associated to stable tropical curve determined by the intersection.
\end{proof}
\end{situation}


\section{Main results}\label{sec:comparison}
In this section, we get the comparison result of faithful tropicalization and skeleton of $\overline{\textsf{M}}_{0,n}$ for $n \geqslant 3$.
\begin{situation}
\textit{For} $n \geqslant 3$.
\par
\begin{example}For $n=3,4,5$,
let $\sigma(\mathscr{T}{\textsf{M}_{0,n}})$ be the image of $\mathscr{T}{\textsf{M}_{0,n}}$ under the section map $\sigma$ of tropicalization, then we have $\textsf{Sk}(\mathcal{X}^{+}_{0,n}) =\sigma(\mathscr{T}{\textsf{M}_{0,n}})$.
\begin{proof}
\begin{enumerate}
    \item For $n=3$, since $\overline{\textsf{M}}_{0,3}=\textsf{Spec}\,K$, $\overline{\textsf{M}}^{\textsf{an}}_{0,3}=\{v_{K}\}$. The claim is obvious.
    \item For $n=4,5$, by compare \ref{04mv1} with \ref{04ann1}; \ref{05mv1} with \ref{05ann1}, we get the results.
\end{enumerate}
\end{proof}
\end{example}
\begin{lemma}\label{ptm0n}
Let $[C]$ be a point of $\overline{\textsf{M}}_{0,n}^{\textsf{an}}$, then $[C]$ is represented by a pair 

    \[(\textsf{val}_{C}: L_{[C]} \to \mathbb{R}\cup \{\infty\},\, \mu_{C}: \textsf{Spec}\,R_{[C]} \to \mathcal{X}^{+}_{0,n})\]
    where $L_{[C]}$ is a field extension of $K$ and $R_{[C]}$ is the correspondent valuation ring of $L$. In particular, the dual graph $G_{[C]}$ of the special fiber of family of stable curves $\mu_{C}$ coincide with tropical curve associated to $\textsf{trop}([C])$.
    \begin{proof}
  Take the model of $\overline{\textsf{M}}_{0,n}$ as $\mathcal{X}^{+}_{0,n}$, then for any point $[C] \in \overline{\textsf{M}}_{0,n}^{\textsf{an}}$, there exits a unique morphism $\phi:\textsf{Spec}\,R_{[C]} \to \mathcal{X}^{+}_{0,n}$ such that the following diagram is commutative by the valuative criterion of properness:
    \begin{equation*}
        \begin{tikzcd}[column sep=2em,]
\textsf{Spec}\,R_{[C]} \arrow[dr, ""] \arrow[rr, dashrightarrow,""]{}
& & \mathcal{X}^{+}_{0,n} \arrow[dl] \\
& \textsf{Spec}\,K^{\circ}
\end{tikzcd}
    \end{equation*}
    Where $R_{[C]}$ is a valuation subring of the valued field $L$ respective with the valuation $\textsf{val}_{C}$.
     This finishes the proof.
    \end{proof}

\end{lemma}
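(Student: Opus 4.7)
The plan is to proceed in two stages: first produce the pair $(\textsf{val}_C, \mu_C)$ from the abstract Berkovich point, and then identify the dual graph of the special fiber with the tropical curve associated to $\textsf{trop}([C])$.

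For the first stage, I would unpack the topological definition of $\overline{\textsf{M}}_{0,n}^{\textsf{an}}$ recalled earlier: a point $[C]$ is by definition a pair $(x,|\cdot|_x)$ consisting of a scheme-theoretic point $x \in \overline{\textsf{M}}_{0,n}$ together with a valuation on $\kappa(x)$ extending $v_K$. Setting $L_{[C]}$ to be (the completion of) $\kappa(x)$ endowed with $\textsf{val}_C \defeq -\log|\cdot|_x$, I obtain a valued field with valuation ring $R_{[C]}$. The canonical map $\textsf{Spec}\,L_{[C]} \to \textsf{Spec}\,\kappa(x) \hookrightarrow \overline{\textsf{M}}_{0,n}$ lies above $\textsf{Spec}\,K \to \textsf{Spec}\,K^{\circ}$, and since $\mathcal{X}^{+}_{0,n}$ is proper over $\textsf{Spec}\,K^{\circ}$, the valuative criterion of properness produces a unique lift $\mu_C: \textsf{Spec}\,R_{[C]} \to \mathcal{X}^{+}_{0,n}$, which is exactly the diagram asserted in the statement.

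For the second stage, I would pull back the universal family $\overline{\mathcal{M}}_{0,n+1} \to \overline{\mathcal{M}}_{0,n}$ along $\mu_C$ to obtain a family of $n$-marked stable curves over $\textsf{Spec}\,R_{[C]}$. Since $\mathcal{X}^{+}_{0,n}$ is strictly semi-stable, the generic points of boundary strata hit by $\mu_C$ correspond bijectively to the nodes of the special fiber, locally defined in \'etale coordinates by equations of the form $xy = \varpi^{a_e}$. Each node then contributes an edge $e$ of the dual graph $G_{[C]}$ equipped with an intrinsic length $\ell(e) = \textsf{val}_C(x) + \textsf{val}_C(y) = a_e \cdot v_K(\varpi)$, so that $G_{[C]}$ acquires the structure of a stable tropical curve in $\textsf{M}_{0,n}^{\textsf{trop}}$.

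The last and hardest step is to verify that the tropical curve $G_{[C]}$ coincides with $\textsf{trop}([C])$. I would argue this by computing both sides in Pl\"ucker coordinates via the Gelfand--MacPherson correspondence and the embedding $\pi \circ P$ of Definition \ref{trmim}: on the one hand, the tropical distance $d(i,j)$ between marked leaves in $\textsf{trop}([C])$ is a linear combination of the values $\textsf{val}_C(p_{kl})$; on the other hand, each such $\textsf{val}_C(p_{kl})$ is read off from the valuation of a local parameter at the generic point of a boundary stratum that $\mu_C$ factors through, and these are exactly the lengths $\ell(e)$. The obstacle is the bookkeeping required to match the combinatorial type and the edge-lengths on the nose; for caterpillar types this follows from the explicit descriptions in Sections \ref{sec:faithfultrop} and \ref{sec:skeleton} (see computations \eqref{04mv1}--\eqref{04ann1} and \eqref{05mv1}--\eqref{05ann1}), and the general case reduces to it via the stratification $\{\mathscr{C}^{(ij)}_{T,J}\}$ used to construct the local sections.
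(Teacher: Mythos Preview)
Your first stage is exactly the argument the paper gives: unpack a Berkovich point as a valued field extension of $K$ and invoke the valuative criterion of properness for $\mathcal{X}^{+}_{0,n}/\textsf{Spec}\,K^{\circ}$ to obtain $\mu_C$. That part is a match.

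The difference lies in the second stage. The paper's own proof stops immediately after the valuative criterion diagram and declares ``This finishes the proof''; it does not supply any argument for the ``In particular'' clause identifying the dual graph $G_{[C]}$ with the tropical curve $\textsf{trop}([C])$. You, by contrast, sketch a genuine argument for this identification: pull back the universal curve along $\mu_C$, read off the combinatorial type from the boundary stratum receiving the closed point, and recover the edge lengths from the valuations of the local node-smoothing parameters, then compare with the Pl\"ucker description of $\textsf{trop}$. This is the standard mechanism (as in Abramovich--Caporaso--Payne or Viviani) and is the right idea; the paper effectively defers this content to the explicit computations in Sections~\ref{sec:faithfultrop}--\ref{sec:skeleton} and to the proof of Theorem~\ref{unidia}(3), where the dictionary between $(\eta,\alpha)$ and $(G_{[C]},\ell)$ is spelled out. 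So your proposal is more complete than the paper's proof of this lemma taken in isolation.

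One small caution on your second stage: $R_{[C]}$ need not be discrete, so the local form ``$xy=\varpi^{a_e}$'' with integer $a_e$ is not quite right in general; what you actually get is a deformation parameter $f_e\in R_{[C]}$ at each node with $\ell(e)=\textsf{val}_C(f_e)\in\mathbb{R}_{\geqslant 0}$. This does not affect the logic of your comparison, but the exponent bookkeeping should be phrased in terms of valuations of the local parameters rather than integer powers of $\varpi$.
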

\begin{theorem}\label{unidia}
The universal curve diagram is commutative:
\begin{equation}
    \begin{tikzcd}
     \textsf{Sk}(\mathcal{X}^{+}_{0,n+1}) \arrow[r,hookrightarrow] \arrow[d, twoheadrightarrow] \arrow[dr,phantom, ""]
& \overline{\textsf{M}}^{\textsf{an}}_{0,n+1}  \arrow[r, rightarrow, "\textsf{trop}"] \arrow[d, twoheadrightarrow, "\pi_{n+1}^{\textsf{an}}"] &  \mathscr{T}\overline{\textsf{M}}_{0,n+1}  \arrow[d, twoheadrightarrow, "\pi_{n+1}^{\textsf{trop}}"] \arrow[r, hookleftarrow] & \mathscr{T}\textsf{M}_{0,n+1}  \arrow[d, twoheadrightarrow] \arrow[ll, bend right=40, "\sigma"]\\
\textsf{Sk}(\mathcal{X}^{+}_{0,n}) \arrow[r,hookrightarrow]
& \overline{\textsf{M}}^{\textsf{an}}_{0,n} \arrow[r, rightarrow, "\textsf{trop}"] & \mathscr{T}\overline{\textsf{M}}_{0,n}  \arrow[r, hookleftarrow]& \mathscr{T}\textsf{M}_{0,n}\arrow[ll, bend left=40, "\sigma"]
\end{tikzcd}
\end{equation}
\begin{proof}
\begin{enumerate}
    \item\label{forgetsk} Let's first prove $\pi_{n+1}^{\textsf{an}}(\textsf{Sk}(\mathcal{X}^{+}_{0,n+1})) \subseteq \textsf{Sk}(\mathcal{X}^{+}_{0,n}) $. For any $[C] \in \textsf{Sk}(\mathcal{X}^{+}_{0,n+1})$, by the Definition \ref{skeleton}, $[C]$ is a point corresponding to a pair $(\eta, \alpha)$, where $\eta$ is a generic point of intersections of irreducible components of $D_{\mathcal{X}_{0,n+1}}$ and $\alpha: \overline{\mathscr{M}}_{\mathcal{X}_{0,n+1}^{+}, \eta} \to \mathbb{R}_{ \geqslant 0}$ is a morphism of monoids such that $\alpha(\overline{m})=\textsf{val}_{C}(m)$ for any $m \in \mathscr{M}_{\mathcal{X}_{0,n+1}, \eta}$ and $\alpha(\varpi)=\textsf{val}_{C}(\varpi)=1$ for any uniformizer $\varpi \in K^{\circ}$. We claim that $\pi_{n+1}^{\textsf{an}}([C]) \defeq [C'] \in \textsf{Sk}(\mathcal{X}^{+}_{0,n})$:
\par
By the structure of $\textsf{Sk}(\mathcal{X}_{0.n+1}^{+})$, we can assume $\eta$ is a generic point of top intersections of irreducible boundary divisors, more precisely, Let $\eta \in \bigcap_{I,J,I',J'}\overline{D^{I}_{J}}\cap\overline{D^{I'}_{J'}}\cap(\mathcal{X}_{0,n+1})_{s}$, where $D^{I}_{J}$ and $D^{I'}_{J'}$ are irreducible boundary divisors on $\overline{\textsf{M}}_{0,n+1}$ and $I,J,I',J'$ are index sets which are satisfied the conditions described in \cite[Fact 4]{Keel92}, then $\pi_{n+1}(\eta) \in \bigcap \overline{\pi_{n+1}({D^{I}_{J}})} \cap \overline{\pi_{n+1}({D^{I'}_{J'}})} \cap (\mathcal{X}_{0,n})_{s}$ which is a dimension zero subscheme of $\mathcal{X}_{0,n}$, note that $\overline{\pi_{n+1}({D^{I}_{J}})}$ or $\overline{\pi_{n+1}({D^{I'}_{J'}})}$ is either $\mathcal{X}_{0,n}$ or an irreducible boundary divisor of $\mathcal{X}_{0,n}$, thus $\pi_{n+1}(\eta)$ is a generic point of intersections of irreducible boundary divisors. Meanwhile, since we have $\pi_{n+1}(\textsf{M}_{0,n+1}) \subseteq  \textsf{M}_{0,n}$, $\pi_{n+1}$ induces a morphism of compactifying log schemes $\pi_{n+1}: (\mathcal{X}_{0,n+1},\mathscr{M}_{\mathcal{X}_{0,n+1}^{+}}) \to (\mathcal{X}_{0,n},\mathscr{M}_{\mathcal{X}_{0,n}^{+}})$, in particular, we have following commutative diagram of characteristic charts at stalks:
\begin{equation}
    \begin{tikzcd}
     \overline{\mathscr{M}}_{\mathcal{X}^{+}_{0,n}, \pi_{n+1}(\eta)} \arrow[r,,"\theta"] \arrow[d, "c_{n}"] 
& \overline{\mathscr{M}}_{\mathcal{X}^{+}_{0,n+1}, \eta} \arrow[d,"c_{n+1}"] \\
 {\mathscr{O}}_{\mathcal{X}_{0,n}, \pi_{n+1}(\eta)}  \arrow[r,hookrightarrow,"\pi^{\sharp}_{n+1, \eta}"]
&  {\mathscr{O}}_{\mathcal{X}_{0,n+1}, \eta} 
\end{tikzcd}
\end{equation}
More precisely, we have:
\begin{equation}\label{forgetfullog}
    \begin{tikzcd}
     \mathbb{N}^{n-2} \arrow[r,,"\theta"] \arrow[d, "c_{n}"] 
& \mathbb{N}^{n-1} \arrow[d,"c_{n+1}"] \\
 {\mathscr{O}}_{\mathcal{X}_{0,n}, \pi_{n+1}(\eta)}  \arrow[r,hookrightarrow,"\pi^{\sharp}_{n+1, \eta}"]
&  {\mathscr{O}}_{\mathcal{X}_{0,n+1}, \eta} 
\end{tikzcd}
\end{equation}
Note that $\textsf{val}_{C'}=\textsf{val}_{C} \circ \pi^{\sharp}_{n+1, \eta}$, now let $\alpha'=\theta \circ \alpha: \overline{\mathscr{M}}_{\mathcal{X}^{+}_{0,n}, \pi_{n+1}(\eta)} \to \mathbb{R}_{\geqslant0}$, we have $\alpha'(\overline{m})=\textsf{val}_{C'}(m)$ for any $m \in \mathscr{M}_{\mathcal{X}_{0,n}, \pi_{n+1}(\eta)}$ and $\alpha'(\varpi)=\textsf{val}_{C'}(\varpi)=1$ for any uniformizer $\varpi \in K^{\circ}$, thus $[C'] \in \textsf{Sk}(\mathcal{X}_{0,n}^{+})$. For the surjectivity, Let $[C']=(\varrho,\beta) \in \textsf{Sk}(\mathcal{X}^{+}_{0,n})$, assume $\varrho$ is the generic point of the top intersection of irreducible boundary divisors of $\mathcal{X}^{+}_{0,n}$ and let $\textsf{val}_{C'}$ be the valuation associated to $[C']$ such that $\beta(\overline{m})=\textsf{val}_{C'}(m)$ for any $m \in \mathscr{M}_{\mathcal{X}^{+}_{0,n},\varrho}$. Then by  \cite[Fact 3]{Keel92}, there exists a generic point $\vartheta$ of the top intersection of irreducible boundary divisors of $\mathcal{X}^{+}_{0,n+1}$ such that $\pi_{n+1}(\vartheta)=\varrho$, meanwhile, for $\beta: \overline{\mathscr{M}}_{\mathcal{X}^{+}_{0,n},\varrho} \to \mathbb{R}_{\geqslant 0} $, then it can be lifted through $\varsigma: \overline{\mathscr{M}}_{\mathcal{X}_{0,n+1,\vartheta}^{+}} \to \mathbb{R}_{\geqslant 0}$ by taking $\varsigma=\beta+\beta'$, where $\beta': \mathbb{N} \to \mathbb{R}_{\geqslant 0}$ is any map of monoids. Thus we have  $\pi_{n+1}^{\textsf{an}}(\vartheta,\varsigma)=(\varrho,\beta)=[C']$ by \ref{forgetfullog}.
\item Now let's prove $\pi_{n+1}^{\textsf{an}}\big(\sigma(\textsf{M}_{0,n+1}^{\textsf{trop}})\big) \subseteq \sigma(\textsf{M}_{0,n}^{\textsf{trop}})$,  let $x$ be a point in $\textsf{M}_{0,n+1}^{\textsf{trop}}$, then $x \in \mathscr{C}_{T}'$, assume $T$ is an arbitrary stable tropical curve with $n+1$ leaves with  endpoints leaves $i,j$ such that $n+1  \notin \{i,j\}$.  Let $\leqslant$ be a partial order on $[n+1] \smallsetminus \{i,j\}$ that has the cherry property on $T$ with respect to $i$ and $j$, assume $n+1$ is the maximal leaf in a subtree $T_{a}$. Consider $x$ as a lift an element $x' \in \overline{\mathscr{C}_{T}} \cap \mathscr{T}\textsf{Gr}_{0}(2,n+1) \subseteq \overline{\mathscr{C}_{T}} \cap \mathscr{T}U_{ij}$, then the associated vanishing set $J(x')=\emptyset$, by \cite[Proposition 3; Theorem 3]{MR3263167}, there exists a compatible set $I$ of size $2(n-1)$ and a local section $\sigma_{T,I,\emptyset}^{(ij)}: \mathscr{C}_{T,\emptyset}^{(ij)} \to (\textsf{Spec}\,K[u_{kl}\,|\,kl \in I])^{\textsf{an}}$ by $\sigma_{T,I,\emptyset}^{(ij)}(x')=\sigma_{I}^{(ij)}(x')$. Now consider $\pi_{n+1}^{\textsf{trop}}(x) \in \mathscr{C}'_{T_{0}}$, then $T_{0}$ is a stable tropical curve with $n$ leaves and $i,j$ as endpoints leaves, take \[ \leqslant_{0} \defeq \leqslant \smallsetminus \{(k,l)\,|\,k\,,\,l=n+1\}\] Then $\leqslant_{0}$ has a cherry property on $T_{0}$ with respect to $i$ and $j$. Let $I_{0}$ be the set which is compatible with $\leqslant_{0}$ and $\emptyset$, then we take $I$ above as $I \defeq I_{0}\cup \{i(n+1),t(n+1)\}$ or $I \defeq I_{0}\cup \{j(n+1),t(n+1)\}$ as the set compatible with $\leqslant$ and $\emptyset$, where $t \in [n]$. Note that we have: \[\Gamma(\textsf{Gr}_{0}(2,n))^{\mathbb{G}^{n}_{m}} \cong K\Big[\big(\frac{{{u_{kl}}}}{{{u_{ik}}{u_{jl}}}}\big)^{ \pm 1 },\big(\frac{{{u_{kl}}}}{{{u_{jk}}{u_{il}}}}\big)^{ \pm 1 }\,|\,k,l \ne i,j \Big]_{kl \in {[n] \choose 2}}\]
In order to see $\pi_{n+1}^{\textsf{an}}\circ \sigma(x)=\sigma(x')$, it's sufficient to these two valuation coincide on $\big(\frac{{{u_{kl}}}}{{{u_{ik}}{u_{jl}}}}\big)^{ \pm 1 },\big(\frac{{{u_{kl}}}}{{{u_{jk}}{u_{il}}}}\big)^{ \pm 1 }$, we only check this on $\frac{{{u_{kl}}}}{{{u_{ik}}{u_{jl}}}}$, the argument for the rest of cases are similar. 
\par
Note that for the index pairs $\{kl,ik,il,jk,jl\}$, 4 of them are in $I_{0}$, assume $ik,il,jk,jl \in I_{0}$, since we have $u_{kl}=u_{ik}u_{jl}-u_{il}u_{jk}$, we only check the valuations on
$\frac{{{u_{il}}{u_{jk}}}}{{{u_{ik}}{u_{jl}}}}$:
\begin{equation}
 \pi_{n+1}^{\textsf{an}}\circ \sigma(x)\big(\frac{{{u_{il}}{u_{jk}}}}{{{u_{ik}}{u_{jl}}}}\big)=\frac{{\textsf{exp}({d_{il}} - {d_{ij}})\textsf{exp}({d_{jk}} - {d_{ij}})}}{{\textsf{exp}({d_{ik}} - {d_{ij}})\textsf{exp}({d_{jl}} - {d_{ij}})}}.   
\end{equation}
\begin{equation}
     \sigma(x')\big(\frac{{{u_{il}}{u_{jk}}}}{{{u_{ik}}{u_{jl}}}}\big)=\frac{{\textsf{exp}({d'_{il}} - {d'_{ij}})\textsf{exp}({d'_{jk}} - {d'_{ij}})}}{{\textsf{exp}({d'_{ik}} - {d'_{ij}})\textsf{exp}({d'_{jl}} - {d'_{ij}})}}.
\end{equation}
where $d_{**},d'_{**}$ are distance between two different leaves on the tropical curves $T$ and $T_{0}$. Let's discuss the relation between $d_{**}$ and $d_{**}'$ in following cases:
\begin{itemize}
    \item[S1.] If $n+1$ is adjacent to an edge and a leaf $m$: 
    \par
    let $d_{0} \ne 0$ be the nearest non-zero distance between $n+1$ and another leaf.
    \par
    (A) $d_{i(n+1)},d_{j(n+1)} \geqslant d_{0}$, (1) If $k,l \ne m$, then $d_{**}=d'_{**}$. (2) If $k=m$, then $d'_{*k}=d_{*k}-d_{0}$, $d'_{ij}=d_{ij}$, $d_{jl}=d'_{jl}$, $d'_{il}=d_{il}$.
    \par
    (B) If $d_{i(n+1)}=0$: then $d'_{i*}=d_{i*}-d_{0}$.
    \item[S2.] If $n+1$ is adjacent to an edge and at least two leaves, then $d_{**}=d'_{**}$.
    \item[S3.] If $n+1$ is only leave adjacent to two edges, then $d_{**}=d'_{**}$.
\end{itemize}
Thus we have $\pi_{n+1}^{\textsf{an}}\circ \sigma(x)=\sigma(x')$.
\item For the commutativity of the middle diagram, By theorem \ref{comparison thm}  for $(\eta,\alpha)=[C] \in \overline{\textsf{M}}_{0,n}^{\textsf{an}}$, we can assume $[C] \in \textsf{Sk}(\mathcal{X}_{0,n}^{+})$, then we have:
\begin{equation*}
    \textsf{trop}([C])=(G_{[C]},\ell_{E(G_{[C]})})
\end{equation*}
where $G_{[C]}$ is dual graph determined by Lemma \ref{ptm0n} and $\ell_{E(G_{[C]})}$ is the length function on the edges of $G_{[C]}$ determined by $\alpha$. More precisely, assume $\eta \in \bigcap^{n-3}_{i=1}\overline{D^{J_{i}}_{I_{i}}}\cap (\mathcal{X}_{0,n})_{s}$, $\alpha=(\alpha_{i})^{n-3}_{i} \in \mathbb{R}_{\geqslant 0}^{n-3}$, then $G_{[C]}$ is stable dual graph with $\#E(G_{[C]})=n-3$ and $\ell_{E(G_{[C]})}$ is determined by $d(k,l)$ which is the distance between two leaves $k,l$ in $G_{[C]}$. $d(k,l)$ is determined by the intersection of $\{D^{J_{i}}_{I_{i}}\}_{i}$ and $\alpha=(\alpha_i)_{i}$ as following:
\par
For each $D^{J_{i}}_{I_{i}}$, 
\[
d_{i}(k,l)=
\begin{dcases}
        0 &  \{k,l\} \subseteq I_{i} \,\,\text{or}\,\, J_{i} \\
        \alpha_{i} & \text{else} 
\end{dcases}
\]
For $D^{J_{i}}_{I_{i}} \cap D^{J_{j}}_{I_{j}}$, assume $J_{i} \subseteq J_{j}$, then $I_{j} \subseteq I_{i}$. assume $\#J_{j}\smallsetminus J_{i} \geqslant 2$
\[
d_{ij}(k,l)=
\begin{dcases}
        0 &  \{k,l\} \subseteq J_{i} \,\,\text{or}\,\, J_{j}\smallsetminus J_{i}\,\,\text{or}\,\, I_{j} \\
        \alpha_{i}+\alpha_{j} & k \in J_{i}, l \in I_{j} \\ \alpha_{i} & k \in J_{i}, l \in J_{j} \smallsetminus J_{i} \\ \alpha_{j} & k \in J_{j} \smallsetminus J_{i},l \in I_{j}
        \end{dcases}
\]
For the rest cases, we can use similar methods to get a unique $d(k,l) \defeq d_{1\cdots(n-3)}(k,l)$ and $\textsf{trop}(\pi^{\textsf{an}}_{n+1}([C]))=\pi_{n+1}^{\textsf{trop}}(\textsf{trop}([C]))$ by \ref{forgetsk}.
\par
This finishes the proof.

\end{enumerate}
\end{proof}
\end{theorem}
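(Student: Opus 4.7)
The plan is to verify the diagram square by square. There are three commutativities to check: (i) $\pi_{n+1}^{\textsf{an}}$ restricts to a surjection $\textsf{Sk}(\mathcal{X}^{+}_{0,n+1})\twoheadrightarrow\textsf{Sk}(\mathcal{X}^{+}_{0,n})$; (ii) $\textsf{trop}\circ\pi_{n+1}^{\textsf{an}}=\pi_{n+1}^{\textsf{trop}}\circ\textsf{trop}$; and (iii) $\pi_{n+1}^{\textsf{an}}\circ\sigma=\sigma\circ\pi_{n+1}^{\textsf{trop}}$ on $\mathscr{T}\textsf{M}_{0,n+1}$.

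For (i), I would use the intrinsic description of a point of $\textsf{Sk}(\mathcal{X}^{+}_{0,n+1})$ as a pair $(\eta,\alpha)$, with $\eta$ the generic point of an intersection of irreducible components of $D_{\mathcal{X}_{0,n+1}}$ and $\alpha\colon\overline{\mathscr{M}}_{\mathcal{X}^{+}_{0,n+1},\eta}\to\mathbb{R}_{\geqslant 0}$ a monoid morphism with $\alpha(\varpi)=1$. Keel's description of the boundary intersection pattern (two boundary divisors $D^{I}_{J}$ and $D^{I'}_{J'}$ intersect nontrivially iff there are inclusions among $I,J,I',J'$) together with the fact that $\pi_{n+1}$ sends boundary strata to boundary strata shows $\pi_{n+1}(\eta)$ is again such a generic point. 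The induced log morphism provides a characteristic-monoid map $\theta\colon\overline{\mathscr{M}}_{\mathcal{X}^{+}_{0,n},\pi_{n+1}(\eta)}\to\overline{\mathscr{M}}_{\mathcal{X}^{+}_{0,n+1},\eta}$, and $\pi_{n+1}^{\textsf{an}}(\eta,\alpha)=(\pi_{n+1}(\eta),\alpha\circ\theta)$ lies in $\textsf{Sk}(\mathcal{X}^{+}_{0,n})$. Surjectivity follows because the extra $\mathbb{N}$-factor upstairs (the new boundary divisor meeting $\eta$) can be assigned an arbitrary nonnegative real value.

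Commutativity (ii) of the middle square is essentially the functoriality of tropicalization under the toroidal forgetful morphism. Via the Gelfand--MacPherson correspondence, tropicalization through the Plücker embedding sends a point $[C]\in\overline{\textsf{M}}_{0,n+1}^{\textsf{an}}$, represented by a valuation $\textsf{val}_C$, to the tuple $(\textsf{val}_C(u_{kl}))_{kl}$ modulo the lineality, and $\pi_{n+1}^{\textsf{trop}}$ simply discards the coordinates involving the index $n+1$, matching $\textsf{trop}\circ\pi_{n+1}^{\textsf{an}}$ coordinate by coordinate. Equivalently, by Lemma \ref{ptm0n}, both routes produce the tropical curve whose underlying graph is the dual graph of the special fibre of the stable family $\mu_C$, with the leaf $n+1$ forgotten and the result stabilized.

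The main obstacle is (iii). I plan to fix $x\in\mathscr{C}'_T\subset\mathscr{T}\textsf{M}_{0,n+1}$ with endpoint leaves $i,j\ne n+1$, and let $T_0$ be the tree on $n$ leaves obtained by forgetting and stabilizing leaf $n+1$. The section $\sigma(x)$ is built in Proposition \ref{localsect} from a compatible index set $I\subset I(ij)$ via $\delta_{|I|}\circ\pi_I$. I would choose a partial order $\leqslant$ with the cherry property on $T$ that restricts to a cherry-property order $\leqslant_0$ on $T_0$ (by inserting leaf $n+1$ maximally in its subtree), together with compatible sets $I_0$ for $T_0$ and $I=I_0\cup\{i(n+1),t(n+1)\}$ for $T$, $t\in[n]$. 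Then for every $\mathbb{G}_m^n$-invariant generator of $\Gamma(\textsf{Gr}_0(2,n))^{\mathbb{G}_m^n}$, of the form $u_{kl}/(u_{ik}u_{jl})$, the Plücker relation $u_{kl}=u_{ik}u_{jl}-u_{il}u_{jk}$ reduces the comparison to the invariant monomial $u_{il}u_{jk}/(u_{ik}u_{jl})$. Both $\pi_{n+1}^{\textsf{an}}\circ\sigma(x)$ and $\sigma\circ\pi_{n+1}^{\textsf{trop}}(x)$ assign this ratio the value $\exp(d_{il}+d_{jk}-d_{ik}-d_{jl})$, where $d$ and $d'$ denote the tree distances on $T$ and $T_0$ respectively; a short case analysis on the local combinatorial position of leaf $n+1$ in $T$ (adjacent to one edge and one leaf; to one edge and at least two leaves; or to two edges) shows that the four relevant distances among $i,j,k,l\in[n]$ agree on $T$ and $T_0$, delivering $\pi_{n+1}^{\textsf{an}}\circ\sigma(x)=\sigma(\pi_{n+1}^{\textsf{trop}}(x))$.
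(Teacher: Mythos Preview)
Your proposal follows essentially the same three-part decomposition and the same arguments as the paper: part (i) via the pair $(\eta,\alpha)$, Keel's intersection combinatorics, the induced log morphism on characteristic monoids, and surjectivity from the extra $\mathbb{N}$-factor; part (iii) via compatible cherry orders on $T$ and $T_0$, index sets $I=I_0\cup\{i(n{+}1),t(n{+}1)\}$, reduction to the monomial $u_{il}u_{jk}/(u_{ik}u_{jl})$ through the Pl\"ucker relation, and a case analysis on the position of leaf $n{+}1$.

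One small imprecision: in (iii) you assert that ``the four relevant distances among $i,j,k,l\in[n]$ agree on $T$ and $T_0$''. In the case where $n{+}1$ is adjacent to exactly one other leaf $m$ (the paper's case S1) and $k=m$, stabilization shifts $d'_{ik}=d_{ik}-d_0$ and $d'_{jk}=d_{jk}-d_0$, so the distances themselves do not agree; what survives is the four-point combination $d_{il}+d_{jk}-d_{ik}-d_{jl}$, in which the $d_0$'s cancel. Your case analysis would of course reveal this, but the sentence as written overstates what is true. For part (ii), your direct coordinate argument via Pl\"ucker tropicalization is slightly cleaner than the paper's route, which instead passes through the dual-graph description of Lemma~\ref{ptm0n} and makes a forward reference to the comparison theorem.
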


\end{situation}

\begin{situation}\label{proofmain}\textbf{Comparison Theorem}

\par
\begin{lemma}\label{kpblow}
 Let $x$ be a point in $\mathscr{T}\textsf{M}_{0,n}$ parameterized by a stable tropical curve $T$ with $n$ leaves and  endpoints leaves $i,j$ such that $n  \notin \{i,j\}$. Consider $\textsf{M}_{0,n}=\textsf{Spec}\,(\Gamma(\textsf{Gr}_{0}(2,n))^{\mathbb{G}^{n}_{m}})$ in Pl\"ucker coordinates for the given $T$, specifically, for $\Gamma(\textsf{Gr}_{0}(2,n))^{\mathbb{G}^{n}_{m}} \cong K\Big[\big(\frac{{{u_{il}}{u_{jk}}}}{{{u_{ik}}{u_{jl}}}}\big)^{ \pm 1 }\,|\,k,l \ne i,j \Big]_{kl \in {[n] \choose 2}}$, we have:
 \begin{enumerate}
     \item $K(\overline{\textsf{M}}_{0,n}) \cong K\Big(\big(\frac{{{u_{il}}{u_{jk}}}}{{{u_{ik}}{u_{jl}}}}\big)\Big)$ such that any three of $\big(\frac{{{u_{il}}{u_{jk}}}}{{{u_{ik}}{u_{jl}}}}\big)$ the cardinality
 of index sets satisfied $\#(\{ij\}\cup\{i'j'\}\cup\{i''j''\}) \geqslant 4$.
 \item For every fixed $\frac{{{u_{il}}{u_{jk}}}}{{{u_{ik}}{u_{jl}}}}$ above, it could be taken as a local generator of the intersections of boundary divisors of $\overline{\textsf{M}}_{0,n}$.
 \end{enumerate}
 \begin{proof}
 \begin{enumerate}
     \item This comes from a direct computation by the Pl\"ucker relations and there are $n-3$ algebraic independent $\frac{{{u_{il}}{u_{jk}}}}{{{u_{ik}}{u_{jl}}}}$ in $\Gamma(\textsf{Gr}_{0}(2,n))^{\mathbb{G}^{n}_{m}}$.
     \item Without loss of generality, we can let $\frac{{{u_{il}}{u_{jk}}}}{{{u_{ik}}{u_{jl}}}}=x_{1}$ and $K\big[\frac{{{u_{il}}{u_{jk}}}}{{{u_{ik}}{u_{jl}}}}\big]$ as $K[x_{1},x_{2},\dots,x_{n-3}]$ and consider $\textsf{Spec}\,K[x_{i}]_{1 \leqslant i \leqslant n-3}$ as $D^{+}(T_{0})$ of $\mathbb{P^{n-3}}$, then we can blow up an affine open subscheme $U$ of $D^{+}(T_{0})$  along $p_1$ and  repeated this process as in \ref{Kapranov}, the local equation of the exceptional divisor $E_1$ is $x_{1}$.
 \end{enumerate}
 \end{proof}
\end{lemma}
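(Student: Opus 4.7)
My plan is to attack the two parts of the lemma separately, starting with the identification of the function field and then passing to the local geometry via Kapranov's blow-up model.

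For part (1), the strategy is to combine the Gelfand--MacPherson correspondence with the Pl\"ucker relations. Since $\overline{\textsf{M}}_{0,n}$ is a compactification of $\textsf{M}_{0,n}$, their function fields agree, so it suffices to work inside $K(\textsf{M}_{0,n}) = K(\textsf{Gr}_0(2,n))^{\mathbb{G}_m^n}$. First I would verify that each cross ratio $\frac{u_{il}u_{jk}}{u_{ik}u_{jl}}$ is $\mathbb{G}_m^n$-invariant, which is a one-line check on weights: $(t_i t_l)(t_j t_k)/((t_i t_k)(t_j t_l)) = 1$. Standard invariant theory for the torus action on Pl\"ucker coordinates then shows these cross ratios generate the invariant field. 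Since $\dim \overline{\textsf{M}}_{0,n} = n-3$, I need to extract $n-3$ algebraically independent cross ratios as a transcendence basis. Here the Pl\"ucker relation $u_{ab}u_{cd} - u_{ac}u_{bd} + u_{ad}u_{bc} = 0$ provides the key constraint: whenever three cross ratios have their combined index set of cardinality $\leqslant 3$, they involve at most four distinct $u_{ab}$ and a Pl\"ucker relation among these produces a multiplicative syzygy, forcing algebraic dependence. Conversely, the hypothesis $\#(\{ij\}\cup\{i'j'\}\cup\{i''j''\}) \geqslant 4$ rules out such a collapse and yields independence.

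For part (2), the plan is to use Kapranov's description \ref{Kapranov} of $\overline{\textsf{M}}_{0,n}$ as an iterated blow-up of $\mathbb{P}^{n-3}$ at $n-1$ points in general position together with the strict transforms of the linear spans they determine. Given the fixed cross ratio $u = \frac{u_{il}u_{jk}}{u_{ik}u_{jl}}$, part (1) lets me complete it to an algebraically independent set $\{u = x_1, x_2, \dots, x_{n-3}\}$, which I identify with affine coordinates on the standard chart $D_+(T_0) \subset \mathbb{P}^{n-3}$. Blowing up the point $p_1 = \{x_1 = \cdots = x_{n-3} = 0\}$ and passing to the chart that retains $x_1$ as a coordinate, the exceptional divisor $E_1$ has local equation $x_1 = 0$. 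Iterating through the subsequent Kapranov blow-ups (along strict transforms of linear subspaces containing $p_1$), $x_1$ (or a unit multiple of its successive strict transforms) continues to cut out one of the components of the boundary, and so serves as a local generator of the intersection.

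The main obstacle will be part (1): making the algebraic independence argument clean requires tracking which Pl\"ucker relations become multiplicative identities when one passes to invariants. Matching the combinatorial hypothesis $\#\geqslant 4$ with the precise list of degenerate configurations is the delicate combinatorial step. Once that is settled, part (2) follows more or less mechanically from Kapranov's construction as already used in Sections \ref{n=4} and \ref{m05sk}.
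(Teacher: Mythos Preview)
Your proposal is correct and follows essentially the same route as the paper's proof. The paper's argument is terser---for (1) it simply asserts that a direct computation with Pl\"ucker relations yields $n-3$ algebraically independent cross ratios, and for (2) it identifies the chosen cross ratio with an affine coordinate $x_1$ on the chart $D_+(T_0)\subset\mathbb{P}^{n-3}$ and reads off $E_1=\{x_1=0\}$ from Kapranov's blow-up---but your more detailed outline (checking torus invariance, using the cardinality condition to preclude multiplicative syzygies, then tracking $x_1$ through the iterated blow-ups) is exactly the substance behind those assertions.
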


\begin{theorem}\label{comparison thm}
Let $\sigma(\mathscr{T}{\textsf{M}_{0,n}})$ be the image of $\mathscr{T}{\textsf{M}_{0,n}}$ under the section map $\sigma$ of tropicalization, then we have $\textsf{Sk}(\mathcal{X}^{+}_{0,n}) =\sigma(\mathscr{T}{\textsf{M}_{0,n}})$.
\end{theorem}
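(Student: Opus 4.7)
The plan is to argue by induction on $n$, with the base cases $n=3,4,5$ already established in the example preceding the theorem by matching equations \ref{04mv1} with \ref{04ann1} and \ref{05mv1} with \ref{05ann1}. For the inductive step, assume the identification holds for $n \geqslant 5$, and consider the forgetful map diagram of Theorem \ref{unidia}. I will show agreement of valuations on a generating set of the function field of $\overline{\textsf{M}}_{0,n+1}$, exploiting that the left-hand column of the diagram is surjective on both $\textsf{Sk}$ and $\sigma(\mathscr{T}\textsf{M}_{0,\bullet})$.

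Concretely, fix a top-dimensional stratum of $\textsf{Sk}(\mathcal{X}_{0,n+1}^{+})$, corresponding to the generic point $\eta$ of an intersection of boundary divisors, which by Kapranov's description (Definition \ref{Kapranov}) is labeled by a stable tropical curve $T$ with $n+1$ leaves. Fix a pair of endpoint leaves $i,j$ of $T$, with $n+1$ in a subtree $T_a$. By Lemma \ref{kpblow}, the Plücker ratios $\frac{u_{il}u_{jk}}{u_{ik}u_{jl}}$ serve as local generators of the characteristic monoid $\overline{\mathscr{M}}_{\mathcal{X}^{+}_{0,n+1},\eta}$, so by Proposition \ref{modelskeleton} any point $(\eta,\alpha) \in \textsf{Sk}(\mathcal{X}^{+}_{0,n+1})$ is determined on the function field by the values $\alpha(\overline{u_{il}u_{jk}/u_{ik}u_{jl}})$. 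On the other side, Proposition \ref{localsect} evaluates the lifted section $\sigma'_{(ij),T,I}$ on these same ratios to $\exp(d(i,l)+d(j,k)-d(i,k)-d(j,l))$ where $d(\cdot,\cdot)$ is the tropical distance on $T$. Thus the theorem reduces to identifying the pairing $(\eta,\alpha) \mapsto \alpha(\overline{\cdot})$ with the tropical distance data, which is exactly the content of Lemma \ref{ptm0n}: the dual graph of the stable reduction at $(\eta,\alpha)$ is $T$, with edge lengths recording $\alpha$ on the boundary strata.

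The induction enters through the forgetful map. For $[C] \in \textsf{Sk}(\mathcal{X}^{+}_{0,n+1})$, Theorem \ref{unidia} gives $\pi_{n+1}^{\textsf{an}}([C]) \in \textsf{Sk}(\mathcal{X}^{+}_{0,n}) = \sigma(\mathscr{T}\textsf{M}_{0,n})$ by hypothesis, so the two valuations I must compare already agree on the pullback subring $\pi_{n+1}^{\sharp}\,\Gamma(\overline{\textsf{M}}_{0,n}) \subseteq \Gamma(\overline{\textsf{M}}_{0,n+1})$. The remaining Plücker ratios are those involving the index $n+1$, and these correspond combinatorially to the edges of $T$ adjacent to the leaf $n+1$ and its neighboring subtree. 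The case analysis handling these ratios is precisely the case analysis S1--S3 appearing in step (2) of the proof of Theorem \ref{unidia}, which already verified $\pi_{n+1}^{\textsf{an}} \circ \sigma(x) = \sigma(\pi_{n+1}^{\textsf{trop}}(x))$ for the ratios involving leaf $n+1$; running that verification in reverse on a point of the skeleton pins down the same exponents.

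The main obstacle is the combinatorial bookkeeping in matching $\alpha$-values on the monoid $\overline{\mathscr{M}}_{\mathcal{X}_{0,n+1}^{+},\eta} \cong \mathbb{N}^{n-2}$ with tropical edge lengths of $T$ along every combinatorial stratum, especially at non-caterpillar types where the stratification $\{\mathscr{C}_{T,J}^{(ij)}\}$ from Section \ref{sec:faithfultrop} must be compared stratum-by-stratum against the Kato fan $F(\mathcal{X}_{0,n+1}^{+})$. Once one observes that the cospecialization maps of Definition \ref{skeleton} on the skeleton side correspond exactly to the face inclusions of the cones $\mathscr{C}'_{T}$ under edge-contractions on the tropical side, gluing the local equalities of valuations on each maximal cone produces the global identification $\textsf{Sk}(\mathcal{X}^{+}_{0,n+1}) = \sigma(\mathscr{T}\textsf{M}_{0,n+1})$, completing the induction.
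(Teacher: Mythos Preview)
Your proposal is correct and follows essentially the same approach as the paper: induction on $n$ via the forgetful-map diagram of Theorem \ref{unidia}, using Lemma \ref{kpblow} to reduce to matching valuations on Pl\"ucker ratios, with the ratios not involving $n{+}1$ handled by the inductive hypothesis and the new ones by the S1--S3 edge-length analysis. The paper packages the inductive step slightly differently---working fiberwise over a point $[C'] \in \textsf{Sk}(\mathcal{X}^{+}_{0,n})$, identifying that fiber with $\mathbb{P}^1_{\mathscr{H}(\xi_n)}$, and factoring the new ratio as $u = u' \cdot (\text{old ratio})$ explicitly---but the underlying computation is the one you describe.
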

\begin{proof}
Assume by induction we have  $\textsf{Sk}(\mathcal{X}^{+}_{0,n}) =\sigma(\mathscr{T}{\textsf{M}_{0,n}})$. for $n$. Let $[C]$ be a point in $\sigma(\mathscr{T}{\textsf{M}_{0,n+1}})$, since $[C]$ is a birational point, we have $[C]$ is a pair \[(\xi_{n+1}, \textsf{val}_{C}: K(\overline{\textsf{M}}_{0,n+1}) \to \mathbb{R} \cup \{\infty\}) \]
where $\xi_{n+1}$ is the generic point of $\overline{\textsf{M}}_{0,n+1}$ and the valuation $\textsf{val}_C$ is an extension of the base field $K$. Consider the point $\pi_{n+1}^{\textsf{an}}([C]) \defeq [C']=(\xi_{n}, \textsf{val}_{C'})$, then for the fiber over $[C']$, we have:
\begin{equation}
    (\overline{\textsf{M}}_{0,n+1}^{\textsf{an}})_{[C']} \cong \Big((\overline{\textsf{M}}_{0,n+1})_{\xi_{n}} \otimes_{\kappa(\xi_{n})} \mathscr{H}(\xi_{n})\Big)^{\textsf{an}}.
\end{equation}
For the fiber $(\overline{\textsf{M}}_{0,n+1})_{\xi_{n}}$, we have \begin{equation}
    \textsf{M}_{0,n+1} \times_{\overline{\textsf{M}}_{0,n}} \textsf{Spec}\,\kappa(\xi_{n}) \hookrightarrow \overline{\textsf{M}}_{0,n+1} \times_{\overline{\textsf{M}}_{0,n}} \textsf{Spec}\,\kappa(\xi_{n}).
\end{equation}
Note that  $\textsf{M}_{0,n+1} \times_{\overline{\textsf{M}}_{0,n}} \textsf{Spec}\,\kappa(\xi_{n}) \cong \mathbb{P}^{1}_{\kappa(\xi_{n})} \smallsetminus \{p_{1},p_{2},\dots,p_{n}\}$, thus $(\overline{\textsf{M}}_{0,n+1})_{\xi_{n}}$ is a compactification of the curve $\mathbb{P}^{1}_{\kappa(\xi_{n})} \smallsetminus \{p_{1},p_{2},\dots,p_{n}\}$. We have a proper surjective morphism $\mathbb{P}^{1}_{\kappa(\xi_{n})} \to (\overline{\textsf{M}}_{0,n+1})_{\xi_{n}}$, thus we have  $\mathbb{P}^{1}_{\kappa(\xi_{n})} \cong (\overline{\textsf{M}}_{0,n+1})_{\xi_{n}}$. Let $\Big(({\textsf{M}}_{0,n+1})_{\xi_{n}} \otimes_{\kappa(\xi_{n})} \mathscr{H}(\xi_{n})\Big)^{\textsf{trop}}$ be the image of tropicalization map restricted on $\Big(({\textsf{M}}_{0,n+1})_{\xi_{n}} \otimes_{\kappa(\xi_{n})} \mathscr{H}(\xi_{n})\Big)^{\textsf{an}}$ and $\textsf{Sk}(\mathbb{P}_{{\mathscr{H}(\xi_{n})}^{\circ}}^{1,+}) := \textsf{Sk}(\mathcal{X}^{+}_{0,n+1}) \cap (\overline{\textsf{M}}_{0,n+1}^{\textsf{an}})_{[C']}$

Now we claim that
\begin{equation}
    \sigma\bigg(\Big(({\textsf{M}}_{0,n+1})_{\xi_{n}} \otimes_{\kappa(\xi_{n})} \mathscr{H}(\xi_{n})\Big)^{\textsf{trop}}\bigg)  =\textsf{Sk}(\mathbb{P}_{{\mathscr{H}(\xi_{n})}^{\circ}}^{1,+}).
\end{equation}
\begin{situation}
To see this, we can take $x \in (\mathbb{P}^{1}_{\mathscr{H}(\xi_{n})} \smallsetminus \{p_{1},p_{2},\dots,p_{n}\})^{\textsf{trop}}$, then $\pi^{\textsf{an}}_{n+1}(\sigma(x))=\xi_{n+1}\defeq \sigma(x')$. Let's assume the combinatorial type trees and local sections associated with $x$ and $x'$ are the same as the proof of (2) in theorem \ref{unidia}. Then there exists a point $v_{x}$ in $\textsf{Sk}(\mathbb{P}_{{\mathscr{H}(\xi_{n})}^{\circ}}^{1,+})$ such that the associated combinatorial type tree is same as $x$'s. Thus it is sufficient to show $v_{x}=\sigma(x)$ in $K({\overline{\textsf{M}}_{0,n+1}}_{[C']})\defeq\mathscr{H}(\xi_{n})(u)$. Without loss of generality, Let's assume the $(n+1)$-th leave of $T$ satisfies the condition S1 above (see Figure \ref{fig:example s1}) and assume $u=\frac{{{u_{i(n + 1)}}{u_{jk}}}}{{{u_{ik}}{u_{j(n + 1)}}}}$, where $1 \leqslant k \leqslant n$. Then there exists $1 \leqslant l \leqslant n$ such that 
\begin{figure}\label{fig:example s1}
    \centering
    
    \begin{tikzpicture}

\draw [](0,0){} -- (-1,1);

\draw [](0,0)-- (-1,-1);
\draw [](0,0)-- (1.5,0);
\draw [dashed](1.5,0)-- (3.0,0);
\draw [](1.5,0)-- (1.5,1);

\draw [](3,0)-- (4,0);

\filldraw[black] (0,0) circle (2pt) node[anchor=west] {};

\filldraw[black] (1.5,0) circle (2pt) node[anchor=west] {};
\filldraw[black] (3,0) circle (2pt) node[anchor=west] {};
\node[label=$j$] at (-1.3,0.6)  {}; 
\node[label=$l$] at (1.5,1.0)  {}; 
\node[label=] at (-1.5,-0.7) {}; 
\node[label=$n+1$] at (-1.5,-1.5) {}; 
\node[label=$i$] at (4.3,-0.4) {}; 
\node[label=$d_{0}$] at (0.75,0.0) {}; 
; 
\end{tikzpicture}
\caption{Example of $(n+1)$-th leave of $T$ satisfies condition S1}
    
    \label{S1}
\end{figure}
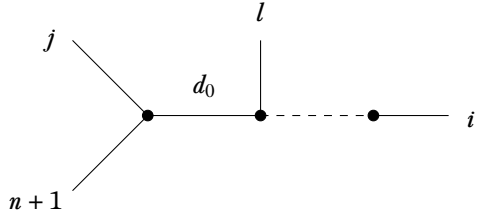
\begin{equation*}
    \sigma(x)(\frac{u_{i(n+1)}u_{jl}}{u_{il}u_{j(n+1)}})=\textsf{exp}(-d_{0}).
\end{equation*}
Note that :
\begin{equation*}
    u=\frac{u_{i(n+1)}u_{jk}}{u_{ik}u_{j(n+1)}}=\bigg(\frac{u_{i(n+1)}u_{jl}}{u_{il}u_{j(n+1)}}\bigg)\cdot\bigg(\frac{u_{il}u_{jk}}{u_{ik}u_{jl}}\bigg).
\end{equation*}
\begin{equation}
    \sigma(x)(u)=\textsf{exp}(-d_{0})\cdot \sigma(x)\bigg(\frac{u_{il}u_{jk}}{u_{ik}u_{jl}}\bigg).
\end{equation}
On the other hand, $v_{x}$ associated tree $T$ is corresponding to  $(\eta_{x},\alpha_{x})$ where $\eta_{x} \in \overline{D^{j(n+1)}}\bigcap_{I,J} \overline{D_{J}^{I}}\bigcap(\mathcal{X}_{0,n+1})_{s}$ and by Lemma \ref{kpblow}, the $\frac{u_{i(n+1)}u_{jl}}{u_{il}u_{j(n+1)}} \defeq u'$ could be taken as an element of the system of the generator of $\mathfrak{m}_{x}$. Then we have:
\begin{equation}
    v_{x}(u)=\textsf{exp}(-\alpha_{x}(\overline{u'}))\cdot v_{x}\bigg(\frac{u_{il}u_{jk}}{u_{ik}u_{jl}}\bigg).
\end{equation}
By the induction, we have $\sigma(x)\bigg(\frac{u_{il}u_{jk}}{u_{ik}u_{jl}}\bigg)=v_{x}\bigg(\frac{u_{il}u_{jk}}{u_{ik}u_{jl}}\bigg)$, thus we have $v_{x}=\sigma(x)$.
\end{situation}

By theorem \ref{unidia}, we have: 
\begin{equation}
    \sigma(\mathscr{T}{\textsf{M}_{0,n+1}})=\bigcup_{[C']} \sigma\bigg(\Big({\textsf{M}}_{0,n+1})_{\xi_{n}} \otimes_{\kappa(\xi_{n})} \mathscr{H}(\xi_{n})\Big)^{\textsf{trop}}\bigg).
\end{equation}
Meanwhile we have 
  \begin{equation}
  \bigcup_{[C']}\textsf{Sk}(\mathbb{P}_{{\mathscr{H}(\xi_{n})}^{\circ}}^{1,+})=\bigcup_{[C']}\textsf{Sk}(\mathcal{X}^{+}_{0,n+1}) \cap (\overline{\textsf{M}}_{0,n+1}^{\textsf{an}})_{[C']}=\textsf{Sk}(\mathcal{X}^{+}_{0,n+1}).
  \end{equation}
Finally, we have $\textsf{Sk}({\mathcal{X}^{+}_{0,n}})=\sigma(\mathscr{T}{\textsf{M}_{0,n+1}})$. This finishes the proof.
\end{proof}
\end{situation}


\bibliographystyle{abbrv}
\bibliography{main}

\end{document}